\pgfplotsset{compat=1.15}
\definecolor{cqcqcq}{rgb}{0.75,0.75,0.75}
\definecolor{Amber}{RGB}{255,191,0}
\theoremstyle{definition}
\newtheorem*{ack}{Acknowledgements}
\let\oldnewtheorem\newtheorem
\RenewDocumentCommand{\newtheorem}{momo}{
  \IfValueTF{#2}{
    \AddToHook{env/#1/begin}{
      \zcsetup{countertype={#2=#1}}}
      \zcRefTypeSetup{#1}{
Name-sg = #3 ,
      }
    \oldnewtheorem{#1}[#2]{#3}
  }{
    \AddToHook{env/#1/begin}{
      \zcsetup{countertype={#1=#1}}}
    \zcRefTypeSetup{#1}{
Name-sg = #3 ,
      }
    \IfValueTF{#4}{
      \oldnewtheorem{#1}{#3}[#4]
    }{
      \oldnewtheorem{#1}{#3}
    }
  }
}
\newcommand{\Cref}[1]{\zcref[S]{#1}}
\theoremstyle{definition}
\newtheorem{dfn}{Definition}[section]
\theoremstyle{plain}
\newtheorem{thm}[dfn]{Theorem}
\newtheorem{pop}[dfn]{Proposition}
\newtheorem{lem}[dfn]{Lemma}
\newtheorem{cor}[dfn]{Corollary}
\theoremstyle{remark}
\newtheorem{rem}[dfn]{Remark}
\newcommand{\R}{\mathbb R}
\newcommand{\LpLS}{Lorentzian pre-length space }
\newcommand{\LpLSn}{Lorentzian pre-length space}
\newcommand{\ma}{\measuredangle}
\newcommand{\tma}{\widetilde \measuredangle}
\newcommand{\bp}{\bar p}
\newcommand{\ba}{\bar a}
\newcommand{\bb}{\bar b}
\newcommand{\bc}{\bar c}
\newcommand{\bz}{\bar z}
\newcommand{\bx}{\bar x}
\DeclareMathOperator{\conv}{conv}
\DeclareMathOperator{\arccosh}{arcosh}
\newcommand{\lm}[1]{\mathbb{L}^2(#1)}
\title[Splitting Theorem for non-positively curved Lorentzian spaces]{A Splitting Theorem for non-positively curved Lorentzian spaces}
\author[Barton]{Joe Barton $^\mathrm{A}$}
\email[Barton]{joe.barton@univie.ac.at}
\author[Beran]{Tobias Beran $^\mathrm{A}$}
\email[Beran]{tobias.beran@univie.ac.at}
\author[Che]{Mauricio Che $^\mathrm{A}$}
\email[Che]{mauricio.adrian.che.moguel@univie.ac.at}
\author[Gieger]{Sebastian Gieger $^\mathrm{A}$}
\email[Gieger]{sebastian.gieger@univie.ac.at}
\author[R\"ohrig]{Jona R\"ohrig $^\mathrm{A}$}
\email[R\"ohrig]{jona.roehrig@univie.ac.at}
\address{$^\mathrm{A}$Department of Mathematics, University of Vienna, Oskar-Morgenstern-Platz 1, 1090 Wien, Austria}
\author[Rott]{Felix Rott $^\mathrm{B}$}
\email[Rott]{frott@sissa.it}
\address{$^\mathrm{B}$SISSA, Via Bonomea 265, 34136 Trieste, Italy}
\date{\today}
\begin{document}

\begin{abstract}
We prove a splitting theorem for Lorentzian pre-length spaces with global non-positive timelike curvature. 
Additionally, we extend the first variation formula to spaces with any timelike curvature bound, either from above or below, and different from 0.
\bigskip

\noindent
\emph{Keywords:} Metric geometry, Lorentzian geometry, Lorentzian length spaces, splitting theorems, first variation formula, curvature bounds
\medskip

\noindent
\emph{MSC2020:}
53C23, 
51K10, 
53C50, 
53B30 
\end{abstract}

\maketitle
\tableofcontents

\section{Introduction}
Splitting theorems are prototypical examples of rigidity results about spaces satisfying extremal geometric conditions. In synthetic approaches to curvature bounds, the presence of such results is often taken as an indicator of a robust structural theory, closer in spirit to the classical Riemannian and Lorentzian settings. 
The classical splitting theorem from Riemannian geometry was developed by Toponogov in \cite{Top59, Top64} and the moral argument for this result and many of its successors goes as follows: a non-negative lower bound on the curvature usually yields conjugate points, beyond which geodesics stop being distance-realising. A geodesic defined on $\R$ in a non-negatively curved space is therefore special, and can occur only under very restrictive circumstances.

Slightly more technically, often it is shown that the \emph{Busemann functions} associated with a straight line are \emph{affine}; this property turns out to completely characterise when a complete Riemannian manifold splits, even without any assumption on the curvature \cite{Inn82}.
In positive signature, splitting theorems exist in the settings of Riemannian manifolds (first under sectional, and later under Ricci curvature bounds), Alexandrov spaces, and even RCD spaces \cite{CG71, Gig14,Mil67}. 
In Lorentzian signature, analogues exist for all of these settings except, so far, for the case of RCD spaces \cite{BEMG85b, BEMG85a, BORS23,  Esc88, Gal89, New90}. 

A crucial point common to all of the results mentioned above is the presence of a \emph{lower} curvature bound. 
By contrast, there are seemingly fewer splitting results for spaces with an upper curvature bound, at least partially because the `classical splitting theorem' is simply wrong in this setting: neither hyperbolic space nor de Sitter space are isometric to a product of another space with the real line (they do, however, split as warped products). 
In fact, in spaces with upper curvature bounds the absence of conjugate points along geodesics is no longer noteworthy, so the above mentioned intuitive argument no longer works. 
As a consequence, splitting results in this context typically require additional assumptions (e.g.\ of topological or group-theoretic nature \cite{burago-burago-ivanov2001, GW71, Inn82, LY72,  Sch85}). 
Alternatively, such spaces may split as cones, which is a special type of warped product \cite{AB05}. 

The main goal of this work is to establish a Lorentzian analogue of a splitting theorem for CAT($0$) spaces proved in \cite{burago-burago-ivanov2001}, where it is assumed that the space is a union of lines. 
The setting of our main theorem (and of the preparatory results leading to it) is that of Lorentzian pre-length spaces, a novel approach to non-smooth Lorentzian geometry. 
Introduced in \cite{kunzinger-saemann2018}, building on earlier work by Alexander--Bishop \cite{alexander-bishop2008} and Harris \cite{harris1982}, this field has demonstrated an impressive growth over the last several years, and is now an established area at the intersection of metric geometry, Lorentzian geometry, mathematical general relativity and optimal transport. 
A simplified version of our main result can be stated as follows. 

\begin{thm}\label{thm:main}
Let $X$ be a \LpLS with timelike curvature globally bounded above by $0$. Let $\gamma$ be a complete timelike line and define
\[
S=\{\alpha:\alpha\text{ is a complete timelike line parallel to }\gamma\}\,.
\]
Then $S$ admits a metric that makes it into a $\mathrm{CAT}(0)$ space and the set $\bigcup_{\alpha\in S}\alpha(\R)$ is isometric to the Lorentzian product $\prescript{-}{}{\R}\times S$. 
In particular, if $X=\bigcup_{\alpha\in S}\alpha(\R)$, then the splitting is global.
\end{thm}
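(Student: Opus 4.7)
The plan is to follow the classical splitting template adapted to the Lorentzian setting with upper curvature bound: introduce Busemann functions of $\gamma$, show they are affine along every parallel line, use them to synchronize the parametrizations of the lines in $S$, and read off both the transverse metric on $S$ and the Lorentzian product structure from the resulting rigidity.

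\textbf{First steps.} First I would associate to $\gamma$ the future and past Busemann functions
\[
b^+(x) \;=\; \lim_{t\to+\infty}\bigl(t - \tau(x,\gamma(t))\bigr), \qquad b^-(x) \;=\; \lim_{t\to-\infty}\bigl(-t - \tau(\gamma(t),x)\bigr),
\]
and verify existence of these limits on the chronological future/past of $\gamma$ using the reverse triangle inequality. On any $\alpha \in S$ the parallelism hypothesis ensures both $b^\pm$ are defined, and each $\alpha$ then inherits a canonical reparametrization in which $b^+\circ\alpha$ is the identity. Next I would establish the key rigidity statement: $b^+$ and $b^-$ are affine along every $\alpha \in S$. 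The proof combines reverse convexity of $\tau$ along timelike curves (a consequence of the upper curvature bound) with the Lorentzian first variation formula extended in this paper, in order to show that the rate of change of $b^\pm$ along $\alpha$ is exactly $\pm 1$.

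\textbf{Transverse metric and splitting.} With affinity in hand, I would define, for $\alpha,\beta \in S$ now parametrized compatibly,
\[
d_S(\alpha,\beta) \;:=\; \sqrt{(t-s)^2 - \tau(\alpha(s),\beta(t))^2},
\]
for any $s<t$ with $\alpha(s) \ll \beta(t)$. Independence of $s$ and $t$ is the manifestation of the splitting and follows from the affinity of the Busemann functions together with the $\CBA(0)$ comparison applied to the ``rectangle'' formed by two points on $\alpha$ and two points on $\beta$, which in the model plane $\lm{0}$ degenerates to a right-angled planar configuration. The triangle inequality for $d_S$ then follows by applying the comparison to triangles with vertices lying on three distinct parallel lines. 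The map $\Phi\colon \prescript{-}{}{\R}\times S \to X$, $(t,\alpha)\mapsto \alpha(t)$, is injective (distinct parallel lines cannot meet without contradicting affinity), and it preserves the time separation by the very definition of $d_S$. Finally, the $\mathrm{CAT}(0)$ property of $(S,d_S)$ is inherited from the ambient upper curvature bound applied to configurations lying in a common level set of $b^+$.

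\textbf{Main obstacle.} The principal technical difficulty is the affinity of $b^\pm$ along parallels. In the Riemannian $\mathrm{CAT}(0)$ case this is immediate from strict convexity of the Busemann function; in the present setting only reverse convexity of $\tau$ along timelike curves is available, so affinity must be extracted by applying the upper curvature comparison to thin timelike quadrilaterals spanning $\gamma$ and $\alpha$. The extended first variation formula developed earlier in the paper is the indispensable tool for this step; once it yields affinity, the basepoint-independence of $d_S$ and the product formula for $\tau$ are both forced by the rigidity in the comparison inequality, and the remaining verifications are routine.
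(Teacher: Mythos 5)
Your route through Busemann functions is genuinely different from the paper's, and it has two concrete gaps. The first and most serious is the step from ``$b^{\pm}$ is affine along each parallel line'' to the product formula $\tau(\alpha(s),\beta(t))=\sqrt{(t-s)^2-d_S(\alpha,\beta)^2}$ and the well-definedness of $d_S$. Affinity of $b^{\pm}$ along each line separately only synchronises the parametrizations; it says nothing about how $\tau$ behaves \emph{between} two distinct parallels. Your claim that the rectangle spanned by two points on $\alpha$ and two on $\beta$ ``degenerates to a right-angled planar configuration in $\lm{0}$'' is precisely the assertion that the two lines bound a flat, convex strip — and that is the hard content of the theorem, not a consequence of the comparison inequality alone. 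In the paper this is Lemma~\ref{lem:flat strip}, which rests on the quadrangle rigidity \Cref{pop:quadrangle rigidity}, which in turn rests on the triangle rigidity \Cref{pop:triangleCBequality} (equality in the comparison inequality forces an isometric fill-in) together with the first variation formula. The paper also does not need Busemann functions at all to get constancy of the transverse separation: it observes that $t\mapsto\tau(\alpha_1(t),\alpha_2(t+c))$ is concave where positive (reverse convexity from the upper bound) and defined on all of $\R$, hence constant; the exact hyperbolic dependence on $c$, i.e.\ synchronised parallelism, then comes only from the flat-strip rigidity. Note also that for an \emph{upper} curvature bound $b^{+}$ is not affine on the space (that is exactly why de Sitter space does not split), so any affinity you extract must be confined to the union of parallels and must ultimately be derived from the same rigidity you are trying to avoid.

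The second gap is your derivation of the $\mathrm{CAT}(0)$ property of $(S,d_S)$ ``from the ambient upper curvature bound applied to configurations lying in a common level set of $b^{+}$.'' A level set of $b^{+}$ is achronal, so it contains no timelike triangles, and the timelike comparison condition gives no direct information about such configurations. The paper instead first proves that $(t,[\alpha])\mapsto\alpha(t)$ is a $\tau$- and causality-preserving embedding of $\prescript{-}{}{\R}\times S$, concludes that the Lorentzian product inherits the global upper bound, and only then invokes the fiber--base correspondence for Lorentzian products (\Cref{pop:AGKS-products}, based on Alexander--Graf--Kunzinger--Sämann) to conclude that $S$ is $\mathrm{CAT}(0)$. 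That transfer from timelike curvature of the product to metric curvature of the fiber is a genuine theorem and cannot be replaced by a direct comparison argument inside a spacelike slice. Finally, two smaller points: $S$ must be taken modulo shift reparametrizations for $d_S$ to be positive definite, and showing that the synchronised representatives of two lines $\alpha,\beta$ (each synchronised with $\gamma$) are also synchronised with \emph{each other} requires a separate asymptotic argument (comparing $t-\tau(\alpha(0),\beta(t))$ along the two flat strips), which your outline does not address.
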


For the precise statement, see \Cref{thm:lorentzianHadamard}. Concerning the outline of the proof, there are some similarities with its lower curvature counterparts, yet also a few different techniques tailored more towards the nature of spaces with upper curvature bounds. 
As to the step from positive signature to the Lorentzian one, we were able to adapt most of the proof of \cite{burago-burago-ivanov2001}, with the usual attention paid to keeping the causality in check. 
In particular, an important preliminary result concerns the rigid behaviour of comparison configurations where equality is achieved in the defining inequality for upper curvature bounds (\Cref{pop:triangleCBequality}). 
We also make use of the first variation formula (\Cref{fvf}), which in the synthetic Lorentzian context, so far, has only been introduced for non-negative lower curvature bounds 
\cite{BarreraMontesdeOcaSolis2022}. 
Moreover, we observe that the main reason why spaces with an upper curvature bound may fail to split is that (future and past) rays do not fit together to a line (\Cref{cor:concat0angle}). 
More precisely, to any point $p$ not on a given line $\gamma$ we may construct two parallel rays associated to $\gamma$ that start at $p$. In Lorentzian signature, these can be naturally described as future and past rays; an analogous construction exists in positive signature, obtained by respectively considering a limit procedure with the two different `ends' of the line. The angle between these rays, however, is generally different from $0$ (as is the case, for instance, in de Sitter space) and hence, they do not form a line. 
In fact, this is the only obstruction to them concatenating to a line. 

\begin{ack}
We would to thank Michael Kunzinger and Roland Steinbauer for their helpful discussions in the initial stages of this project. 

JB, TB, MC, SG, and JR were funded by the Austrian Science Fund (FWF) [Grant DOI's: 10.55776/EFP6, 10.55776/STA32]. 

FR acknowledges the support of the European Union - NextGenerationEU, in the framework of the PRIN Project `Contemporary perspectives on geometry and gravity' (code 2022JJ8KER – CUP G53D23001810006). The views and opinions expressed are solely those of the authors and do not necessarily reflect those of the European Union, nor can the European Union be held responsible for them.

For open access purposes, the authors have applied a CC BY public copyright license to any author-accepted manuscript version arising from this submission.
\end{ack}

\section{Preliminaries}\label{s:preliminaries}
In this section, we summarise notation, definitions and general results about Lorentzian pre-length spaces that are fundamental for this work. We refer the reader to \cite{kunzinger-saemann2018} for the general theory of Lorentzian pre-length spaces.

Throughout this paper, we will denote a \LpLS $(X,d,\leq,\ll,\tau)$ by $X$. Causal (resp.\ timelike) curves in $X$ are monotone curves with respect to $\leq$ (resp.\ $\ll$). A future-directed \textit{geodesic} in $X$ is a causal curve $\gamma \colon I\to X$, defined on some interval $I\subseteq \mathbb{R}$, such that
\[
\tau(\gamma(s),\gamma(t)) = L_\tau(\gamma|_{[s,t]})
\]
for all $s\leq t$, $s,t\in I$, where $L_{\tau}$ is the $\tau$-length functional. Past-directed geodesics are defined similarly. Moreover, any timelike geodesic admits constant-speed parametrizations, i.e.\ one may assume that for some $C > 0$, 
\[
\tau(\gamma(s),\gamma(t)) = C(t-s) 
\] 
for all $s\leq t$. In particular, when $C=1$, we say that $\gamma$ is \textit{parametrized by $\tau$-arclength}. 
We refer to a curve that locally realises the $\tau$-distance as a \emph{local geodesic}.

Furthermore, whenever $x,y\in X$ are chronologically related, we denote by $[x,y]$ the geodesic  
starting at  $x$ and ending at $y$, if it exists and is unique, or if the choice is fixed, or if this choice is not relevant. 
We denote by $\lm{K}$ the Lorentzian model space of constant curvature $K$ and by $D_K$ its (finite) diameter, cf.\ \cite[Definition 4.5]{kunzinger-saemann2018} and \cite[Definition 2.6]{BNR25}. 
Finally, recall that $X$ is called \emph{regular} if any geodesic between timelike related points is timelike, i.e. does not contain a null segment.

\begin{dfn}[Timelike triangles]
Let $X$ be a \LpLSn. An \emph{(unordered) timelike triangle} $\triangle(x,y,z)$ is a collection of pairwise timelike related points $x,y,z$ (not necessarily in the order $x\ll y\ll z$) together with geodesics $[x,y],[x,z],[y,z]$ connecting them. A \emph{comparison triangle} in $\lm{K}$ for $\triangle(x,y,z)$ is a timelike triangle $\triangle(\bar{x},\bar{y},\bar{z})$ in $\lm{K}$ with the same sidelengths as $\triangle(x,y,z)$.
We say a timelike triangle \emph{satisfies size bounds} if it possess a comparison triangle, which is then unique up to isometry.
This is the case if and only if the longest side is less than $D_K$, cf.\ \cite[Lemma 4.6]{kunzinger-saemann2018}. 
\end{dfn}

\begin{dfn}[Timelike curvature bounds]
\label{def: triangle comparison}
Let $X$ be a \LpLSn. An open subset $U$ is called a \emph{$(\geq K)$- (resp.\ $(\leq K)$-)comparison neighbourhood} (in the sense of triangle comparison) if:
\begin{enumerate}[label=(\roman*)]
\item $\tau$ is continuous on $(U\times U) \cap \tau^{-1}([0,D_K))$, and this set is open. 
\item For all $x \ll y$ with $\tau(x,y) < D_K$, there exists a geodesic between them that is contained in $U$.
\item \label{TLCB.item3} Let $\triangle (x,y,z)$ be a timelike triangle in $U$, with $p,q$ two points on the sides of $\triangle (x,y,z)$. Let $\triangle(\bar{x}, \bar{y}, \bar{z})$ be a comparison triangle in $\lm{K}$ for $\triangle (x,y,z)$ and $\bar{p},\bar{q}$ comparison points for $p$ and $q$, respectively. Then 
\begin{equation}
\label{eq: timelike triangle comparison inequality}
\tau(p,q) \leq \tau(\bar{p},\bar{q}) \quad \text{ (resp.\ } \tau(p,q) \geq \tau(\bar{p}, \bar{q}) \text{)} \, .
\end{equation}
\end{enumerate}
We say $X$ has \emph{timelike curvature bounded below (resp.\ above) by $K$} if $X$ can be covered by $(\geq K)$- (resp.\ $(\leq K)$-)comparison neighbourhoods. 
We say $X$ has timelike curvature \emph{globally} bounded below (resp.\ above) by $K$ if $X$ itself is a $(\geq K)$- (resp.\ $(\leq K)$-)comparison neighbourhood. 
\end{dfn}
Note that within a $(\geq K)$-comparison neighbourhood, $p \ll q$ implies $\bar{p} \ll \bar{q}$, and within a $(\leq K)$-comparison neighbourhood, $\bar{p} \ll \bar{q}$ implies $p \ll q$. 
Furthermore, recall that within a $(\leq K)$-comparison neighbourhood, geodesics are unique, cf.\ \cite[Theorem 4.7]{BNR25}. 
We refer to \cite{beran-kunzinger-rott2024} for equivalent formulations of \Cref{def: triangle comparison}. 

In \cite{beran-saemann2023}, a notion of angles between geodesics that have a common endpoint is introduced. 
Namely, for any timelike geodesics $\alpha\colon [0,a)\to X$, $\beta\colon [0,b)\to X$ such that $\alpha(0)=\beta(0)=p$, 
\begin{equation}
\label{eq:angle between curves}
\ma_p(\alpha,\beta) = \limsup_{s,t\to 0} \widetilde{\ma}^K_p(\alpha(s),\beta(t)) \, ,
\end{equation}
where $\widetilde{\ma}^K_p(\alpha(s),\beta(t))$ denotes the angle at the vertex corresponding to $p$ in a comparison triangle for $\triangle(p,\alpha(s),\beta(t))$ in $\mathbb{L}^2(K)$ and the $\limsup$ only runs over the $s,t$ where $\widetilde{\ma}^K_p(\alpha(s),\beta(t))$ is defined. 
This definition of angle is independent of $K$, cf.\ \cite[Proposition 2.14]{beran-saemann2023}. 
If $K=0$, we may drop the $K$ on comparison angles and just write $\tma$. 

If $X$ has a curvature bound from either direction, then the limit superior in \eqref{eq:angle between curves} is actually a limit, cf.\ \cite[Lemma~4.10 and Theorem~4.13]{beran-saemann2023}. 
Moreover, if $X$ has timelike curvature bounded above by $K$, then the angle between any geodesics with the same time orientation is finite, and if $X$ has timelike curvature bounded below by $K$, then the angle between any geodesics with different time orientations is finite.

We also define the \textit{signed angles}: 
\begin{equation}\label{eq:signed angle}
\ma_p^S(\alpha,\beta) = \begin{cases}
-\ma_p(\alpha,\beta) & \text{if $\alpha$ and $\beta$ have the same time orientation,} \\
\ma_p(\alpha,\beta) & \text{otherwise}.
\end{cases}
\end{equation}

Furthermore, by \cite[Corollary~4.11]{beran-saemann2023}, if $X$ has timelike curvature bounded above by $K$, then
\begin{equation}\label{eq:angle condition}
\widetilde{\ma}_p^{K,S}(\alpha(s),\beta(t)) \leq \ma_p^S(\alpha,\beta)\, ,
\end{equation}
whenever both angles are well-defined. 
If for $y,z \in I^{\pm}(x)$ the choice of geodesics $[x,y],[x,z]$ does not matter, we may use the shorthand notation $\ma_x(y,z)=\ma_x([x,y],[x,z])$. 

Recall that angles between geodesics satisfy the triangle inequality in the following sense (see \cite[Theorems~3.1 and 4.5]{beran-saemann2023} and \cite[Lemma~4.14]{beran-kunzinger-rott2024}).

\begin{pop}[Triangle inequality for angles]
\label{pop:triangle inequality angles}
Let $X$ be a strongly causal Lorentzian pre-length space with $\tau$ locally finite valued and locally continuous, and let $\alpha,\beta,\gamma\colon [0, \varepsilon) \to X$ be timelike curves starting at $x:=\alpha(0) = \beta(0) = \gamma(0)$. 
\begin{enumerate}[label=(\roman*)]
    \item\label{FuFuFu/PaPaPa} If $\alpha,\beta,\gamma$ are all future-directed or all past-directed, then
    \[
    \ma_x(\alpha,\gamma) \leq \ma_x(\alpha,\beta) + \ma_x(\beta,\gamma) \, .
    \]
    \item\label{FuFuPa/PaPaFu} Assume that $X$ is strongly causal, and $\tau$ is locally finite and locally continuous. If $\alpha,\beta$ are both future-directed, $\gamma$ is past-directed, and $\ma_x(\alpha,\gamma)$ exists, then 
    \[
    \ma_{x}(\alpha,\gamma)\leq \ma_{x}(\alpha,\beta) + \ma_x(\beta,\gamma)\, .    
    \]
    Analogously for $\alpha,\beta$ both past-directed and $\gamma$ is future-directed.
    \item\label{triIneqAng:AlongGeo} If $X$ has timelike curvature bounded above by $K$ for some $K\in \R$, $\alpha,\beta$ are future-directed, $\gamma$ is past-directed, and the concatenation of $\gamma\cdot \beta$ is a geodesic, then
    \[
    \ma_{x}(\alpha,\gamma)\leq \ma_{x}(\alpha,\beta) \, .    
    \]
    An analogous inequality holds if $\alpha,\beta$ are both past-directed and $\gamma$ is future-directed such that $\beta\cdot \gamma$ is a geodesic.
\end{enumerate}
\end{pop}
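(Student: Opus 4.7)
My plan is to reduce each claim to a triangle inequality for hyperbolic angles in the model space $\lm{0}$ and then pass to the limit in \eqref{eq:angle between curves}. Working with $K=0$ is legitimate by the $K$-independence of the angle (\cite[Proposition~2.14]{beran-saemann2023}). The central model-space input is that three timelike vectors at a common vertex in a Minkowski space of arbitrary dimension have pairwise hyperbolic angles satisfying the triangle inequality: for vectors of the same time orientation this is the triangle inequality for hyperbolic distance on the unit future hyperboloid (a copy of the hyperbolic plane), and mixed-orientation cases reduce to this via reflecting the past-directed vector through the origin.

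For part \ref{FuFuFu/PaPaPa}, I would use the $\limsup$ definition to fix, for any $\varepsilon > 0$, a scale $\delta > 0$ such that $\tma_x(\alpha(s),\beta(t)) \leq \ma_x(\alpha,\beta) + \varepsilon$ and $\tma_x(\beta(t),\gamma(u)) \leq \ma_x(\beta,\gamma) + \varepsilon$ whenever the parameters are below $\delta$ and the comparison triangles exist. For each pair $(s,u)$ below $\delta$ with $\tma_x(\alpha(s),\gamma(u))$ close to $\ma_x(\alpha,\gamma)$, I would pick an auxiliary $t \leq \delta$ and assemble the configurations into a joint realisation $\bar x, \bar a, \bar b, \bar c$ in a higher-dimensional Minkowski space $\lm{0}^m$, $m \geq 3$, matching the sidelengths from $\bar x$ together with the pairwise comparison angles for $(\bar\alpha,\bar\beta)$ and $(\bar\beta,\bar\gamma)$; the hyperboloid triangle inequality then yields $\tma_x(\alpha(s),\gamma(u)) \leq \tma_x(\alpha(s),\beta(t)) + \tma_x(\beta(t),\gamma(u))$. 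Passing to $\limsup$ and using the uniform bounds completes the argument. Part \ref{FuFuPa/PaPaFu} is handled in the same spirit, with the additional verification that mixed-orientation comparison triangles exist, which follows from strong causality together with the local continuity and finiteness of $\tau$ (ensuring $\gamma(u) \ll \beta(t)$ via $x$ for small parameters).

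For part \ref{triIneqAng:AlongGeo}, the extra hypotheses allow a reduction to part \ref{FuFuPa/PaPaFu} via the observation that $\ma_x(\beta,\gamma) = 0$ whenever $\gamma \cdot \beta$ is a geodesic: the geodesic property gives $\tau(\gamma(u),\beta(t)) = u+t$ for all small $u, t$, which forces the comparison ``triangle'' in $\lm{K}$ to collapse to a collinear configuration along a timelike line, and a direct calculation via the Lorentzian law of cosines gives $\tma^K_x(\beta(t),\gamma(u)) = 0$. The upper curvature bound ensures that $\ma_x(\alpha,\gamma)$ exists as a genuine limit, so part \ref{FuFuPa/PaPaFu} applies to yield $\ma_x(\alpha,\gamma) \leq \ma_x(\alpha,\beta) + 0$. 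The main technical obstacle throughout is coordinating the independent $\limsup$ limits while ensuring that all auxiliary comparison triangles in $\lm{0}$ exist for the chosen parameter configurations; in particular, two prescribed equal sidelengths from $\bar x$ force a degenerate configuration in $\lm{0}$, so a careful diagonal extraction or small parameter perturbation may be required, and this is the step I expect to require the most care.
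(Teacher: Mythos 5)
The paper does not prove this proposition: it simply recalls it, citing \cite[Theorems~3.1 and 4.5]{beran-saemann2023} and \cite[Lemma~4.14]{beran-kunzinger-rott2024}. Your blind attempt therefore has to be judged on its own terms, and as written it has a genuine gap at its core.

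The problem is the claim, used for both parts \ref{FuFuFu/PaPaPa} and \ref{FuFuPa/PaPaFu}, that the joint realisation in a higher-dimensional Minkowski space implies $\tma_x(\alpha(s),\gamma(u)) \leq \tma_x(\alpha(s),\beta(t)) + \tma_x(\beta(t),\gamma(u))$. This inequality is a statement about \emph{pointwise comparison angles}, and it is false in general. Your realisation fixes the three time separations from $\bar x$ and the two angles $\angle_{\bar x}(\bar a,\bar b)$, $\angle_{\bar x}(\bar b,\bar c)$; this determines the realised separation $\tau(\bar a,\bar c)$ and the realised angle $\angle_{\bar x}(\bar a,\bar c)$. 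The hyperboloid triangle inequality bounds the \emph{realised} angle by the sum of the other two, but there is no reason why $\tau(\bar a,\bar c)$ should agree with (or bound appropriately) the actual $\tau_s(\alpha(s),\gamma(u))$ in $X$, and hence no reason why $\angle_{\bar x}(\bar a,\bar c)$ should dominate $\tma_x(\alpha(s),\gamma(u))$. To see the inequality actually fail, work with same-orientation chains $x \ll \beta(t) \ll \alpha(s) \ll \gamma(u)$, keep $\tau(x,\beta(t))$ small relative to the other two, and let $\tau_s(\alpha(s),\gamma(u))$ shrink towards $0$: the Lorentzian law of cosines then drives $\tma_x(\alpha(s),\gamma(u))$ up without bound while $\tma_x(\alpha(s),\beta(t))$ and $\tma_x(\beta(t),\gamma(u))$ stay moderate. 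In other words, the triangle inequality is a statement about the \emph{limit} angles, not about the comparison angles at fixed finite scale, and your argument never uses the limit structure in an essential way.

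The standard repair (and, as far as I can tell, the route taken in the cited references) is an Alexandrov-type argument that makes a very specific choice of the auxiliary parameter $t$: one builds the comparison triangle for $\triangle(x,\alpha(s),\gamma(u))$ alone, chooses $\bar b$ on the side $[\bar a,\bar c]$ so that the angles at $\bar x$ split additively \emph{by flatness of the model}, and then sets $t := \tau(\bar x,\bar b)$. Only after this choice does one compare $\tma_x(\alpha(s),\beta(t))$ with $\angle_{\bar x}(\bar a,\bar b)$ (using the reverse triangle inequality $\tau(\bar a,\bar b)+\tau(\bar b,\bar c)=\tau(\bar a,\bar c)$ together with the definition of $\ma_x$ as a $\limsup$), which is where the limit finally enters. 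Your phrase ``pick an auxiliary $t\leq\delta$'' leaves $t$ essentially arbitrary, and that is exactly what breaks the argument. The concern you flag at the end (degenerate configurations with equal side lengths) is a real technicality but is not the main obstruction.

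Two smaller remarks. First, your reduction of part \ref{triIneqAng:AlongGeo} to part \ref{FuFuPa/PaPaFu} by showing $\ma_x(\beta,\gamma)=0$ when $\gamma\cdot\beta$ is a geodesic is a reasonable idea, but the hypotheses of \ref{FuFuPa/PaPaFu} include that $\ma_x(\alpha,\gamma)$ \emph{exists} (is finite); under a mere upper curvature bound the mixed-orientation angle can be infinite, so finiteness must be argued separately before the reduction is legitimate (indeed, the inequality in \ref{triIneqAng:AlongGeo} forces it to be finite a posteriori, but one cannot assume it when applying \ref{FuFuPa/PaPaFu}). Second, the $K$-independence of the limit angle does justify working in $\lm{0}$, so that part of the setup is fine.
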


The following angle sum formula for timelike triangles in Minkowski space can be regarded as an analogue to the fact that the three interior angles of a Euclidean triangle sum up to $\pi$. We include the proof for the sake of completeness.

\begin{lem}[Angle sum for triangles in Minkowski space]
\label{lem:angleSumMink}
Let $a\ll b\ll c$ be points in Minkowski space. Then
\[
\ma_a(b,c)+\ma_c(a,b)=\ma_b(a,c)
\]
or equivalently, with signed angles,
\[
\ma_a^S(b,c)+\ma_b^S(a,c)+\ma_c^S(a,b)=0 \, .
\]
\end{lem}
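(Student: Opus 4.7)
The plan is to reduce to two-dimensional Minkowski space and exploit the rapidity parametrization of future-directed timelike vectors, under which the hyperbolic angle between two same-oriented vectors becomes a simple difference of rapidities.

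First I would observe that if $a,b,c$ are collinear on a timelike line, then all three angles vanish by equality in the reverse triangle inequality, and the identity is trivial. Otherwise, the affine plane $\Pi$ they span is two-dimensional; its induced bilinear form cannot be definite (since $b-a$ has negative norm) nor degenerate (for a timelike $u$, the orthogonal complement $u^\perp$ is spacelike and hence contains no null vector). Thus $\Pi$ is Lorentzian and isometric to $\lm{0}$, and we may work there. Set $u:=b-a=r_1 e(\alpha)$, $v:=c-b=r_2 e(\beta)$, and $w:=c-a=u+v=:r_3 e(\gamma)$, where $e(\theta):=(\cosh\theta,\sinh\theta)$ parametrizes future-directed timelike unit vectors by rapidity. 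A short computation shows that $\tanh\gamma$ is a positive weighted average of $\tanh\alpha$ and $\tanh\beta$, so $\gamma$ lies between $\alpha$ and $\beta$; by symmetry assume $\alpha\leq\gamma\leq\beta$.

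The core of the proof is three direct computations using $\langle e(\theta_1),e(\theta_2)\rangle=-\cosh(\theta_1-\theta_2)$ and the fact that in $\lm{0}$ every timelike triangle is its own comparison triangle, so each angle equals its comparison angle exactly. At $a$, the geodesics $[a,b]$ and $[a,c]$ are future-directed with rapidities $\alpha$ and $\gamma$, yielding $\ma_a(b,c)=\gamma-\alpha$. At $c$, $[c,a]$ and $[c,b]$ are past-directed with past-rapidities $\gamma$ and $\beta$, giving $\ma_c(a,b)=\beta-\gamma$. At $b$, where $[b,a]$ is past-directed and $[b,c]$ future-directed, expanding $\tau(a,c)^2$ via $c-a=u+v$ and feeding it into the mixed-orientation comparison formula $\cosh\theta=(\tau(a,c)^2-\tau(a,b)^2-\tau(b,c)^2)/(2\tau(a,b)\tau(b,c))$ gives $\ma_b(a,c)=\beta-\alpha$.

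Adding the first two contributions yields $(\gamma-\alpha)+(\beta-\gamma)=\beta-\alpha$, matching the third, which is the unsigned identity. The signed version follows directly from \eqref{eq:signed angle}: the angles at $a$ and $c$ carry a negative sign (same time orientation) while the one at $b$ carries a positive sign (opposite orientations), producing a telescoping zero. The only genuinely delicate point I anticipate is the mixed-orientation angle at $b$, where the comparison formula must be applied to one past- and one future-directed vector; every other step is a routine Minkowski computation once the rapidity parametrization is in place.
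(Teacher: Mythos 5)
Your proof is correct, but it takes a genuinely different route from the paper's. The paper's argument is synthetic: it normalises $c-a$ to be vertical, translates a copy of $\triangle(a,b,c)$ upward by $\tau(a,c)$ so that the translated $a$ lands on $c$, and then reads off the identity from angle additivity at $c$ in the plane (the translated triangle supplies a ray at $c$ in the direction $b-a$, splitting the angle $\ma_b(a,c)=\ma_c(b,b_2)$ into $\ma_c(a,b)+\ma_a(b,c)$). You instead make the underlying mechanism fully explicit via the rapidity parametrization: each of the three angles becomes a difference of rapidities ($\gamma-\alpha$, $\beta-\gamma$, $\beta-\alpha$), and the identity is a telescoping sum. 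Your computations check out — in particular the mixed-orientation angle at $b$ via $\cosh\theta=(\tau(a,c)^2-\tau(a,b)^2-\tau(b,c)^2)/(2\tau(a,b)\tau(b,c))$ agrees with the paper's law of cosines with $\sigma=+1$, and the weighted-average argument correctly places $\gamma$ between $\alpha$ and $\beta$ so that all three unsigned angles are non-negative. Your approach is longer but more self-contained: it does not appeal to an unproved "angle additivity in the plane", and the preliminary reduction to a two-dimensional Lorentzian plane (which the paper leaves implicit) is handled carefully. The paper's proof buys brevity and a picture; yours buys explicitness and a verification that would generalise verbatim to higher-dimensional ambient Minkowski space.
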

\begin{proof}
Assume that the vector $c-a$ is vertical. Then shifting a copy of $\triangle(a,b,c)$ vertically upward by $\tau(a,c)$ makes a new triangle $\triangle(a_2,b_2,c_2)$ with $a_2=c$. Note that $\tau(b,b_2)=\tau(a,c)$, thus also $\triangle(b,c,b_2)$ is a time reversed copy of $\triangle(a,b,c)$. By angle additivity in the plane, we get that $\ma_b(a,c)=\ma_c(b,b_2)=\ma_c(a,b)+\ma_c(b_2,c_2)=\ma_c(a,b)+\ma_a(b,c)$ as desired.
\end{proof}

We now define a notion of isomorphism between Lorentzian pre-length spaces. 

\begin{dfn}[Structure preserving maps]
\label{def:isometry}
Given two Lorentzian pre-length spaces $X$ and $Y$, we say a map $f\colon X\to Y$ is \textit{causality-preserving} if $f(x)\leq_Y f(y)$ if and only if $x\leq_X y$. On the other hand, $f$ is \textit{$\tau$-preserving} if $\tau_Y(f(x),f(y))=\tau_X(x,y)$ for any $x,y\in X$.

Furthermore, an \textit{isometry} between Lorentzian pre-length spaces is a bijective map $f\colon X\to Y$ such that $f$ and thus also $f^{-1}$ is $\tau$- and causality-preserving. 
\end{dfn}

\begin{rem}[Isometries, topology and causality]
Note that a $\tau$-preserving map need not be causality-preserving, cf.\ \cite[Remark 3.2.6]{BR24} (even though the terminology in this reference is slightly different, the remark is still valid). 

Further observe that if $X$ and $Y$ are strongly causal Lorentzian pre-length spaces, then any isometry $f\colon X\to Y$ in the sense of \Cref{def:isometry} is automatically a homeomorphism. This is because, in strongly causal Lorentzian pre-length spaces, the metric topology coincides with the Alexandrov topology, and an isometry in the sense of \Cref{def:isometry} preserves the subbase of the Alexandrov topology given by chronological diamonds. 
\end{rem}

The following notion of Lorentzian product appeared in \cite[Definition 3.1]{BORS23}. 
If the base metric space is geodesic, then Lorentzian products in that sense are a special case of the even earlier introduced \emph{generalised cones} in \cite{alexander-graf-kunzinger-saemann2023} (by setting the warping function $f \equiv 1$), cf.\ \cite[Proposition~3.6]{Ber25}. 

\begin{dfn}[Lorentzian product]
\label{def: lor product}
Let $(X,d)$ be a metric space. The \textit{Lorentzian product} $Y= \R \times X$ is given by the product metric 
\begin{equation*}
D\colon Y \times Y \to \R, \ D((s,x),(t,y)):=\sqrt{|t-s|^2+d(x,y)^2} \, ;    
\end{equation*}
the chronological relation 
\[(s,x) \ll (t,y)\quad \text{if and only if}\quad t-s > d(x,y) \, ;\] the causal relation \[(s,x) \leq (t,y) \quad \text{if and only if}\quad t-s \geq d(x,y) \, ;\] and the time separation $\tau\colon Y \times Y \to \R$ given by 
\begin{equation*}
\tau((s,x),(t,y)):=\sqrt{(t-s)^2 - d(x,y)^2}    
\end{equation*}
if $(s,x) \leq (t,y)$, and $0$ otherwise. 

When it is clear that the Lorentzian product is meant, we simply denote it by $\prescript{-}{}{\R}\times X$.
\end{dfn}

The following proposition is a consequence of results and arguments from \cite{alexander-graf-kunzinger-saemann2023}. We include a proof for the sake of completeness.

\begin{pop}[Properties of Lorentzian products]
\label{pop:AGKS-products}
Let $X$ be a length space, then the Lorentzian product $Y=\prescript{-}{}{\mathbb{R}}\times X$ is a Lorentzian pre-length space. Furthermore, if $X$ is geodesic and $Y$ has timelike curvature (globally) bounded above by $0$, then $X$ has non-positive curvature in the triangle comparison sense (globally, i.e.\ $X$ is a $\mathrm{CAT}(0)$ space).
\end{pop}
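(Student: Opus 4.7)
The plan addresses the two claims in sequence.

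\textbf{First stage.} To show that $Y$ is a \LpLSn{}, I would verify the axioms directly. The product $D$ is clearly a metric. Transitivity of $\leq$ (and $\ll$) follows from the triangle inequality in $X$: if $(s_1,x_1)\leq(s_2,x_2)\leq(s_3,x_3)$, then
\[
s_3-s_1=(s_2-s_1)+(s_3-s_2)\geq d(x_1,x_2)+d(x_2,x_3)\geq d(x_1,x_3),
\]
and the analogous strict computations handle transitivity of $\ll$ and the $(\leq,\ll)\Rightarrow\ll$ push-ups. The only nontrivial axiom is the reverse triangle inequality for $\tau_Y$; with $\Delta_{ij}:=s_j-s_i$ and $d_{ij}:=d(x_i,x_j)$, it reduces to the classical Lorentzian reverse triangle inequality in $\R^{1,1}$ applied to the future-directed timelike vectors $(\Delta_{12},d_{12})$ and $(\Delta_{23},d_{23})$, combined with $d_{13}\leq d_{12}+d_{23}$:
\[
\sqrt{\Delta_{13}^{\,2}-d_{13}^{\,2}}\ \geq\ \sqrt{\Delta_{13}^{\,2}-(d_{12}+d_{23})^2}\ \geq\ \sqrt{\Delta_{12}^{\,2}-d_{12}^{\,2}}+\sqrt{\Delta_{23}^{\,2}-d_{23}^{\,2}}.
\]
Lower semicontinuity of $\tau_Y$ and its compatibility with $\leq,\ll$ are immediate from the explicit formulas.

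\textbf{Second stage: lifting triangles.} Now assume $X$ is geodesic and $Y$ has globally non-positive timelike curvature. Given a triangle $\triangle(a,b,c)$ in $X$, I pick $0=t_a<t_b<t_c=:T$ with $T$ large enough that $A:=(0,a)$, $B:=(t_b,b)$, $C:=(T,c)$ form a non-degenerate timelike triangle in $Y$. Let $\triangle(\tilde a,\tilde b,\tilde c)$ denote the Euclidean comparison triangle of $\triangle(a,b,c)$ in $\R^2$. Lifting this into Minkowski 3-space $\R^{1,2}$ as $\tilde A:=(0,\tilde a)$, $\tilde B:=(t_b,\tilde b)$, $\tilde C:=(T,\tilde c)$ produces a timelike triangle whose $\tau$-sidelengths coincide with those of $\triangle(A,B,C)$, because the Euclidean sidelengths of $\triangle(\tilde a,\tilde b,\tilde c)$ agree with the $X$-sidelengths of $\triangle(a,b,c)$. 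Being timelike, $\tilde A,\tilde B,\tilde C$ span a Lorentzian $2$-plane of $\R^{1,2}$, isometric to $\lm{0}$; thus the lifted triangle serves as a comparison triangle for $\triangle(A,B,C)$ in $\lm{0}$.

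\textbf{Extracting CAT(0).} For $p=\gamma(s)$ on the $X$-geodesic $\gamma\colon[0,1]\to X$ from $a$ to $b$, the point $P:=(st_b,\gamma(s))$ lies on the $Y$-geodesic from $A$ to $B$, and a direct check of $\tau$-lengths shows that its comparison point in $\lm{0}$ is $(st_b,\tilde p)$ with $\tilde p=(1-s)\tilde a+s\tilde b$ the Euclidean comparison point of $p$. Applying the $(\leq 0)$-condition on $Y$ to $P$ and $C$ yields $\tau_Y(P,C)\geq\tau_{\lm{0}}((st_b,\tilde p),\tilde C)$; after choosing $T$ large enough that the right-hand side is positive, squaring and cancelling the common $(T-st_b)^2$ gives
\[
d_X(\gamma(s),c)\ \leq\ d_{\R^2}(\tilde p,\tilde c),
\]
which is the CAT$(0)$ inequality for the vertex case. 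The full two-point version (with $p,q$ on different sides of $\triangle(a,b,c)$) follows by the same argument applied to the two-point form of the upper bound. The main subtle step is recognizing the lift of the Euclidean triangle as a genuine comparison triangle in $\lm{0}$; once this is in hand, the direction flip from the Lorentzian ``$\geq$'' to the CAT$(0)$ ``$\leq$'' is a routine squaring argument.
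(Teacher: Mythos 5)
Your proof is correct, but it follows a genuinely different route from the paper. The paper disposes of both claims by citation: the pre-length-space structure of $Y$ is \cite[Proposition~3.2]{BORS23}, and the CAT($0$) conclusion is \cite[Theorem~5.7]{alexander-graf-kunzinger-saemann2023} (Lorentzian products being generalised cones with $f\equiv 1$), upgraded from local to global by inspecting the constant $C$ in \cite[Lemma~5.6]{alexander-graf-kunzinger-saemann2023} and noting that it becomes $\infty$ under the global curvature hypothesis. You instead give a self-contained argument: the reverse triangle inequality for $\tau_Y$ via the $\R^{1,1}$ inequality plus the triangle inequality in $X$, and then the CAT($0$) property by lifting the Euclidean comparison triangle into a timelike plane of $\R^{1,2}$, checking that the lift is a genuine $\lm{0}$-comparison triangle for $\triangle(A,B,C)$ and that comparison points correspond, and reading off $d_X(p,c)\le d_{\R^2}(\tilde p,\tilde c)$ from $\tau_Y(P,C)\ge\tau_{\lm 0}(\bar P,\tilde C)$ by squaring. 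This is sound (in particular, a $2$-plane containing a timelike vector is isometric to $\lm{0}$, and uniqueness of geodesics in $(\le 0)$-comparison neighbourhoods is not even needed since you choose the product lifts explicitly); the payoff is that the globality of the conclusion is transparent rather than extracted from the internals of a cited proof, at the cost of redoing work the literature already contains. One small caveat: for the two-point version with $p\in[a,b]$, $q\in[b,c]$ both close to the shared vertex $b$, the lifts $P,Q$ need not be chronologically related for a fixed $T$, so you cannot literally apply the comparison condition to every such pair; this is harmless because the point-to-opposite-vertex inequality you do establish already characterises CAT($0$) for geodesic spaces (or one can pass to the limit from pairs bounded away from $b$), but it is worth saying explicitly rather than deferring to ``the same argument''.
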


\begin{proof}
The fact that $Y$ is a Lorentzian pre-length space follows from \cite[Proposition~3.2]{BORS23}.
The fact that $X$ has non-positive curvature whenever $Y$ has timelike curvature bounded above by $0$ follows from \cite[Theorem~5.7]{alexander-graf-kunzinger-saemann2023}.

Furthermore, by carefully inspecting the proof of \cite[Lemma~5.6]{alexander-graf-kunzinger-saemann2023}, one can see that when $Y$ has timelike curvature globally bounded above by $0$, the constant $C>0$ corresponding to the $(\leq 0)$-comparison neighbourhood $Y$ given by that Lemma is $C=\infty$. This implies that in the proof of \cite[Theorem~5.7]{alexander-graf-kunzinger-saemann2023} one can set $U=X$ and obtain that $X$ is a $(\leq 0)$-comparison neighbourhood in the metric sense, i.e.\ $X$ is a $\mathrm{CAT}(0)$ space. 
\end{proof}

\section{First variation formula}

In this section, we prove a first variation formula for Lorentzian pre-length spaces with timelike curvature bounded above or below, analogous to the positive-signature one (see \cite[Theorem~4.5.6]{burago-burago-ivanov2001} and \cite[Theorem~3.5]{otsu-shioya1994}). To the best of our knowledge, the only prior versions of the first variation formula in synthetic Lorentzian geometry are \cite[Theorems~19 and 20]{BarreraMontesdeOcaSolis2022}, which hold for Lorentzian pre-length spaces with timelike non-negative curvature.  

Throughout this section, we denote by $\tau_s(p,q):=\max\{\tau(p,q),\tau(q,p)\}$ the order independent time separation function.
We start by considering the model spaces.

\begin{pop}[First variation formula in the model space]
\label{pop: First variation formula in the model space}
Let $K\in\mathbb R$ and $a,b \in \mathbb L^2(K)$ be timelike related. 
Let $\beta$ be a geodesic joining the two points and $\gamma:[0,\delta]\to X$ be a future-directed timelike geodesic parametrized by arclength starting at $\gamma(0)= a$. Assume $c:=\gamma(\delta)$ is also timelike related to $b$ and the triangle $\triangle (b,a,c)$ satisfies size bounds. For $t\in[0,\delta]$, we call side lengths $y:=\tau_s(b,a)$, $t=\tau(a,\gamma(t))$ and $z(t):=\tau_s(b,\gamma(t))$. Then
\begin{equation}\label{eq:constantfirstvariation}
\sigma\cosh(\measuredangle^S_a(\gamma,\beta))=\lim_{t\to0}\frac{z(t)-y}{t} \, ,
\end{equation}
where 
\[\sigma:=\begin{cases}
+1 & \text{if }\ b\ll a \, ,\\
-1 & \text{if }\ b\gg a \, .
\end{cases}\ \]
\end{pop}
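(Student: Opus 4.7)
The plan is to carry out a direct computation inside the model space $\lm{K}$, using the Lorentzian law of cosines (hinge formula) and Taylor expanding to first order in $t$. I would begin by separating cases according to the sign of $\sigma$, since this determines the causal configuration of the triangle $\triangle(b,a,\gamma(t))$ for small $t$. If $\sigma=+1$, so $b\ll a$, then by openness of $\ll$ the chain $b\ll a\ll \gamma(t)$ holds for all sufficiently small $t>0$; in particular $\tau_s(b,\gamma(t))=\tau(b,\gamma(t))=z(t)$ and $\tau_s(b,a)=\tau(b,a)=y$. Since $\beta$ runs past-ward from $a$ while $\gamma$ runs future-ward, they have opposite time orientations and so $\ma_a^S(\gamma,\beta)=\ma_a(\gamma,\beta)$. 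If $\sigma=-1$, so $a\ll b$, then continuity of $\tau$ and openness of $\ll$ give $a\ll \gamma(t)\ll b$ for small $t>0$, with $\tau_s(b,\gamma(t))=\tau(\gamma(t),b)=z(t)$; both $\beta$ and $\gamma$ are future-directed from $a$, so $\ma_a^S(\gamma,\beta)=-\ma_a(\gamma,\beta)$. In either case $\cosh(\ma_a^S)=\cosh(\ma_a)$, and the assertion reduces to showing $z'(0)=\sigma\cosh(\ma_a(\gamma,\beta))$.

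Once the configuration is fixed, I would apply the law of cosines in $\lm{K}$. In Minkowski ($K=0$) this takes the form
\begin{equation*}
z(t)^2 = y^2 + t^2 + 2\sigma\, y\, t\, \cosh(\ma_a(\gamma,\beta)),
\end{equation*}
the $+$ sign in the cross-term corresponding to the chain triangle $b\ll a\ll \gamma(t)$ ($\sigma=+1$) and the $-$ sign to the non-chain triangle $a\ll \gamma(t)\ll b$ ($\sigma=-1$); this follows by placing $a$ at the origin, writing $b$ and $\gamma(t)$ as (appropriately signed) scalar multiples of unit future-directed timelike vectors, and expanding $\langle b-\gamma(t),b-\gamma(t)\rangle$. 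Differentiating implicitly at $t=0$, where $z(0)=y$, yields $z'(0)=\sigma\cosh(\ma_a(\gamma,\beta))$ at once. For $K\neq 0$ the same strategy applies with the corresponding hinge formula of $\lm{K}$, expressed via the generalized trigonometric functions; implicit differentiation at $t=0$ causes the $y$-dependent factor arising from the derivative of the left-hand side to cancel against the one from the $t$-dependent term on the right, leaving the same conclusion $z'(0)=\sigma\cosh(\ma_a(\gamma,\beta))=\sigma\cosh(\ma_a^S(\gamma,\beta))$.

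The main source of care is bookkeeping: identifying in each case which of $\tau(b,\gamma(t))$ and $\tau(\gamma(t),b)$ realises $\tau_s$, tracking the sign of $\ma^S$ from the time orientations of $\beta$ and $\gamma$, and using the correct sign of the cross-term in the cosine law (chain versus non-chain triangle). The size-bound hypothesis is used precisely to guarantee that the triangle in $\lm{K}$ exists and varies continuously with $t$, so that the cosine law applies uniformly throughout the expansion. Beyond this no substantive difficulty is expected, since the entire computation takes place inside the smooth model space, where all the relevant quantities depend smoothly on $t$.
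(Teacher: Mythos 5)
Your proposal is correct and follows essentially the same route as the paper: both reduce the claim to the Lorentzian law of cosines in $\lm{K}$ and extract the derivative of $z$ at $t=0$, the only cosmetic difference being that you differentiate the hinge relation implicitly at $t=0$ whereas the paper writes out $z(t)$ explicitly for $K=0$ and applies L'H\^opital, noting the other curvatures work analogously. One minor terminological quibble: the configuration $a\ll\gamma(t)\ll b$ for $\sigma=-1$ is also a causal chain (with $\gamma(t)$ as the middle vertex, rather than $a$), so ``non-chain triangle'' is an imprecise label, though the sign bookkeeping you attach to it is correct.
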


\begin{proof}
    First of all, note that by the definition of comparison angles, we have $\measuredangle^S_a(\gamma,\alpha)=\widetilde\measuredangle_a^{K,S}(b,c)=:\theta$. 
    Recall the Lorentzian law of cosines for triangles in spaces of constant sectional curvature $K\in\mathbb R$:
    \[\begin{cases}
        y^2+t^2=z^2-2\sigma yt\cosh(\theta) 
        \\
        \cos(\sqrt{|K|}z)=\cos(\sqrt{|K|}y)\cos(\sqrt{|K|}t)-\sigma\cosh(\theta)\sin(\sqrt{|K|}y)\sin(\sqrt{|K|}t) 
        \\
        \cosh(\sqrt{K}z)=\cosh(\sqrt{K}y)\cosh(\sqrt{K}t)+\sigma\cosh(\theta)\sinh(\sqrt{K}y)\sinh(\sqrt{K}t) 
    \end{cases}\]
    where the first equation is used for $K=0$, the second one or $K<0$ and the final one for $K>0$.
    The proof is now a simple computation. We show \eqref{eq:constantfirstvariation} only in the case $K=0$, with the other two cases working analogously by applying the appropriate version of the Lorentzian law of cosines.
    \begin{align*}
    \lim_{t\to0}\frac{z(t)-y}{t} 
    &=  \lim_{t\to0} \frac{\sqrt{t^2 +y^2+2\sigma ty\cosh{\theta}}-y}{t}\\
    &= \lim_{t\to0}\frac{t+ \sigma y\cosh\theta}{  \sqrt{t^2 +y^2+2\sigma t y \cosh{\theta}}}\\
    &=\frac{\sigma y \cosh\theta}{\sqrt{y^2}} \\
    &=\sigma \cosh\theta \, , 
    \end{align*}
    where the second equation follows from L'Hôpital's rule. 
    
\end{proof}
We now extend this result to general \LpLSn s. This will be done in two parts. 
\begin{pop}[First variation formula, first inequality]
\label{prop:geq}
Let $X$ be a \LpLSn. 
Let $U \subseteq X$ be a ($\leq K$) or a ($\geq K$)-curvature comparison neighbourhood, $\gamma:[0,\delta]\to U$ be a future-directed timelike geodesic parametrized by arclength, and $p$ be timelike related to $a:=\gamma(0)$. Assume further that there exists a geodesic $\beta_t$ from $p$ to $\gamma(t)$ for all $t\in [0,\delta)$. Define $l(t) = \tau_s(p,\gamma(t))$. If the signed angle $\theta:=\ma^S_a (\gamma,\beta_0)$ exists and is finite then
\[ \liminf_{t\searrow 0}\frac{l(t)-l(0)}{t} \geq \sigma\cosh(\ma_a(\gamma,\beta_0)) \, ,\]
where 
\[\sigma:=\begin{cases}
    +1 & p\ll a \, ,\\
    -1 & p\gg a \, .
\end{cases}\ \]
\end{pop}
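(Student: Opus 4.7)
The plan is to decompose $l(t)$ via the Lorentzian reverse triangle inequality applied to an auxiliary point on $\beta_0$, and then to control the remaining $\tau_s$-distance by means of a comparison triangle in the model space whose comparison angle converges to $\theta$ as the size of the triangle shrinks.

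Concretely, I would reparametrize the geodesic joining $p$ and $a$ by $\tau$-arclength starting at $a$, say $\eta\colon[0,y]\to X$ with $\eta(0)=a$ and $\eta(y)=p$ (where $y:=\tau_s(a,p)$), and introduce the auxiliary point $q_T:=\eta(T)$ for small $T\in(0,y)$. Applying the Lorentzian reverse triangle inequality to the chain $p\ll q_T\ll a\ll\gamma(t)$ in the case $\sigma=+1$, and to the chain $\gamma(t)\ll q_T\ll p$ in the case $\sigma=-1$ (the latter requiring $T/t$ sufficiently large to ensure the chronological relation $\gamma(t)\ll q_T$), yields
\[
l(t)\;\ge\;(y-T)+m(t,T),\qquad m(t,T):=\tau_s(q_T,\gamma(t)).
\]

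Next, since $a,q_T,\gamma(t)$ form a timelike triangle of sidelengths $T,t,m(t,T)$, the Lorentzian law of cosines in its comparison triangle in $\lm{K}$ gives (illustrated for $K=0$; analogously for other $K$)
\[
m(t,T)^{2}\;=\;T^{2}+t^{2}+2\sigma Tt\cosh\bigl(\widetilde{\ma}_{a}^{K,S}(q_T,\gamma(t))\bigr).
\]
The key observation is that, by the curvature bound hypothesis, the $\limsup$ in the angle definition \eqref{eq:angle between curves} of $\theta$ is a genuine limit (cf.\ \cite[Lemma~4.10 and Theorem~4.13]{beran-saemann2023}), hence $\widetilde{\ma}_{a}^{K,S}(q_T,\gamma(t))\to\theta$ as $(t,T)\to(0,0)$ through valid configurations. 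Given $\varepsilon>0$, this lets us fix $T$ small enough so that, for $t$ sufficiently small (with $T/t$ large when $\sigma=-1$), $\cosh(\widetilde{\ma}_{a}^{K,S}(q_T,\gamma(t)))$ differs from $\cosh\theta$ by at most $\varepsilon$ in the relevant direction. Plugging into the bound on $l(t)$, dividing by $t$, and running the same manipulation as in the proof of \Cref{pop: First variation formula in the model space}, one obtains
\[
\liminf_{t\searrow 0}\frac{l(t)-y}{t}\;\ge\;\sigma\cosh\theta-O(\varepsilon),
\]
and sending $\varepsilon\to 0$ concludes.

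The main obstacle I expect is the $\sigma=-1$ case: $\gamma(t)$ and $q_T$ both lie in the chronological future of $a$ and need not be timelike related a priori, so one has to pick the ratio $T/t$ large enough (essentially $T/t>e^{|\theta|}$) to ensure both that the comparison triangle is well-defined and that the chronological relation $\gamma(t)\ll q_T$ in $X$ is inherited from the comparison configuration. This is precisely the step where the assumed finiteness of $\theta$ is essential.
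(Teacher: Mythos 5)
Your argument is essentially the paper's: you pick an auxiliary point on $\beta_0$ near $a$ (your $q_T$, the paper's $b_m$), peel it off via the reverse triangle inequality to get $l(t)-l(0)\ge\tau_s(q_T,\gamma(t))-T$, and then estimate that remaining side using the Lorentzian law of cosines in a comparison triangle whose comparison angle converges to $\theta$. The only presentational difference is that the paper organizes this with a double sequence $b_m\to a$, $t_n\to 0$ together with an explicit uniformity-in-$\tilde\theta$ estimate in the model space, whereas you fix $T$ first and run an $\varepsilon$-argument; both work, and your observation that for fixed $T$ the constraint on $T/t$ in the $\sigma=-1$ case is satisfied automatically as $t\searrow 0$ is correct. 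One small point worth spelling out (the paper does): for $\sigma=-1$ with a $(\leq K)$-bound the comparison angles $\widetilde\measuredangle_a^{K,S}(q_T,\gamma(t))$ a priori live in $(-\infty,\theta]$, so you need to note that, for fixed $T$, monotonicity in $t$ plus finiteness of some $\widetilde\measuredangle_a^{K,S}(q_T,\gamma(\delta'))$ gives a lower bound before passing to the limit; with that said, your $\lim_{t\to 0}\frac{m(t,T)-T}{t}=\sigma\cosh\widetilde\theta(T,0^+)$ computation goes through and $T\to 0$ concludes.
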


\begin{figure}
    \centering
    \begin{tikzpicture}[line cap=round,line join=round,>=triangle 45,x=0.3cm,y=0.3cm]
\clip(-11.,-17.) rectangle (5.,7.);
\draw [shift={(18.,0.)},line width=1.pt]  plot[domain=2.819842099193151:3.4633432079864352,variable=\t]({1.*18.973665961010276*cos(\t r)+0.*18.973665961010276*sin(\t r)},{0.*18.973665961010276*cos(\t r)+1.*18.973665961010276*sin(\t r)});
\draw [shift={(14.,-30.)},line width=1.pt]  plot[domain=2.0988707752212563:2.613518205163434,variable=\t]({1.*27.78488797889961*cos(\t r)+0.*27.78488797889961*sin(\t r)},{0.*27.78488797889961*cos(\t r)+1.*27.78488797889961*sin(\t r)});
\draw [shift={(23.39336333724209,-21.83353214142168)},line width=1.pt]  plot[domain=2.3683699585604665:2.9686464691206,variable=\t]({1.*33.92071115815807*cos(\t r)+0.*33.92071115815807*sin(\t r)},{0.*33.92071115815807*cos(\t r)+1.*33.92071115815807*sin(\t r)});
\draw [shift={(37.698458999321275,-18.29891124365339)},line width=1.pt]  plot[domain=2.6601357550221714:2.941817930083284,variable=\t]({1.*43.5292648941362*cos(\t r)+0.*43.5292648941362*sin(\t r)},{0.*43.5292648941362*cos(\t r)+1.*43.5292648941362*sin(\t r)});
\draw [shift={(0.,-6.)},line width=1.pt]  plot[domain=1.839098465228527:3.705627898214046,variable=\t]({1.*2.0272592363457687*cos(\t r)+0.*2.0272592363457687*sin(\t r)},{0.*2.0272592363457687*cos(\t r)+1.*2.0272592363457687*sin(\t r)});
\begin{scriptsize}
\draw [fill=black] (0.,-6.) circle (1.5pt);
\draw[color=black] (0.7,-6) node {$a$};
\draw[color=black] (-1,5) node {$\gamma$};
\draw [fill=black] (-10.,-16.) circle (1.5pt);
\draw[color=black] (-9,-16.22840161113279) node {$p$};
\draw[color=black] (-7,-13) node {$\beta_0$};
\draw [fill=black] (-0.8824520041607258,1.8582266574792823) circle (1.5pt);
\draw[color=black] (2,2) node {$c_n=\gamma(t_n)$};
\draw[color=black] (-7.8,-6.172531674089621) node {$\beta_{t_n}$};
\draw [fill=black] (-4.935474214766206,-9.666583753291132) circle (1.5pt);
\draw[color=black] (-4,-10) node {$b_m$};
\draw[color=black] (-2,-3) node {$z_{mn}$};
\draw[color=black] (0,-1.6069355532481824) node {$t_n$};
\draw[color=black] (-1.8,-8.2) node {$y_m$};
\draw[color=black] (-1.1,-5.7) node {$\theta$};
\end{scriptsize}
\end{tikzpicture}
\hspace{1cm}
\begin{tikzpicture}[line cap=round,line join=round,>=triangle 45,x=0.3cm,y=0.3cm]
\clip(20.,-19.) rectangle (27.,4.);
\draw [line width=1.pt] (24.,2.)-- (26.,-8.);
\draw [line width=1.pt] (26.,-8.)-- (22.,-12.);
\draw [line width=1.pt] (22.,-12.)-- (24.,2.);
\draw [shift={(26.,-8.)},line width=1.pt]  plot[domain=1.7681918866447768:3.9269908169872414,variable=\t]({1.*2.5*cos(\t r)+0.*2.5*sin(\t r)},{0.*2.5*cos(\t r)+1.*2.5*sin(\t r)});
\begin{scriptsize}
\draw [fill=black] (24.,2.) circle (1.5pt);
\draw[color=black] (24.282688811099185,2.7162370773349602) node {$\tilde c_n$};
\draw [fill=black] (26.,-8.) circle (1.5pt);
\draw[color=black] (26.7,-8) node {$\tilde a$};
\draw[color=black] (25.8,-2.7780311595755767) node {$t_n$};
\draw [fill=black] (22.,-12.) circle (1.5pt);
\draw[color=black] (21.7,-12.4) node {$\tilde b_m$};
\draw[color=black] (25,-10.4) node {$y_m$};
\draw[color=black] (21.8,-4.740269815615054) node {$z_{mn}$};
\draw[color=black] (24.7,-7.605138253432691) node {$\widetilde\theta_{mn}$};
\end{scriptsize}
\end{tikzpicture}
    \caption{Set up for the first variation formula for $\sigma=1$.}
    \label{fig:fvf}
\end{figure}
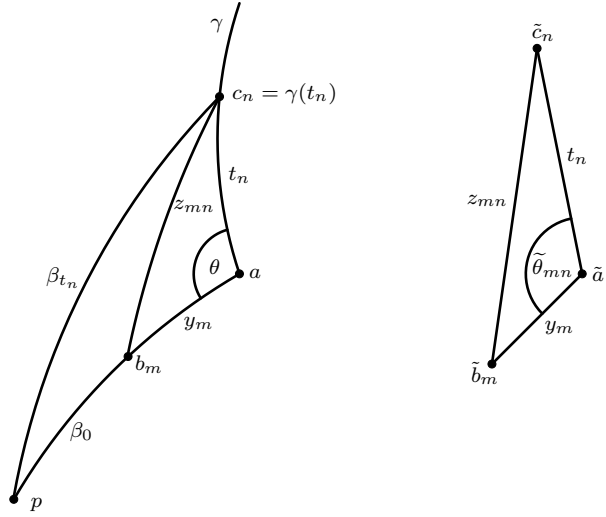

\begin{proof}
    We have already established the formula (with equality) in the comparison spaces in \Cref{pop: First variation formula in the model space}. Indeed, take timelike triangles $\triangle(\tilde b, \tilde a, \tilde c)$ in $\mathbb{L}^2(K)$, satisfying size bounds, with side lengths $t:=\tau(\tilde a,\tilde c)$, $y:=\tau_s(\tilde b,\tilde a)$ and angle $\widetilde \theta = \widetilde\measuredangle^{K,S}_{\tilde a}(\tilde b,\tilde c)$. Then, for every $\varepsilon>0$ and for any $\widetilde\theta\in\mathbb R$, there exists a $T_{\varepsilon,\widetilde\theta}>0$ such that for all $t\in(0,T_{\varepsilon,\widetilde\theta})$, we have
    \begin{equation}\label{eq:cosh}
    \left|\sigma \cosh{\widetilde\theta} - \frac{z(t)-y}{t}\right|<\varepsilon \, ,
    \end{equation}
    where $z(t)$ is the length of the remaining side of the triangle, uniquely determined by the other three parameters $t,y$ and $\widetilde\theta$.\\
    
    We first consider the case $\sigma=1$ and $U$ is a $(\leq K)$-curvature comparison neighbourhood. If we restrict ourselves to all angles $\widetilde\theta$ in the compact interval $[0,\theta]$, one can even find a $T_\varepsilon$ independent of $\widetilde\theta$ such that \eqref{eq:cosh} is satisfied for all $t\in(0,T_\varepsilon)$. This is true because the mapping $\widetilde\theta\mapsto T_{\varepsilon,\widetilde\theta}$ can be chosen to be continuous. As a result, if we take any positive sequence $(t_n)_{n\in\mathbb{N}}$ with $\lim_{n\to\infty}t_n=0$ and any monotonically increasing sequence $(\widetilde\theta_n)_{n\in\mathbb{N}}$ satisfying $0\leq \widetilde \theta_n\leq \theta\in\R$, then there exists an $N_\varepsilon\in\mathbb N$ such that for $n\geq N_\varepsilon$ we have $t_n\leq T_\varepsilon$ and consequently
    \[\left|\cosh{\widetilde\theta_n} - \frac{z(t_n,\widetilde\theta_n)-y}{t_n}\right|<\varepsilon \, .\]
    In particular, there exists a $\widetilde\theta_\infty\in\mathbb R$ such that 
    \[   \cosh{\widetilde\theta_\infty} = \lim_{n\to \infty} \cosh{\widetilde\theta_n} = \lim_{n\to\infty} \frac{z(t_n,\widetilde\theta_n)-y}{t_n} \, .\]
    Here $z(t_n,\widetilde\theta_n)$ is the length of the third side of the triangle with side lengths $t_n$ and $y$ and angle $\widetilde\theta_n$ in $\mathbb L^2(K)$.\\

    So far, all considerations have only taken place in the comparison space. We now return to the Lorentzian pre-length space X. Take any sequence $b_m \in \beta_0$ converging to $a$ and assume, without loss of generality, that all $b_m$ lie in the comparison neighbourhood $U$.  
    Define $c_n:=\gamma(t_n)$ and consider the triangles $\triangle (b_m, a, c_n)$ with comparison angle $\widetilde\theta_{mn}=  \widetilde\measuredangle^{K,S}_a (b_m, c_n)$ and side lengths $y_m$, $t_n$ and $z_{mn}$, see \Cref{fig:fvf}.
    By the curvature bound, $\widetilde\theta_{mn}$ is monotonically increasing in $n$, and so by the argument above, for any fixed $m\in \mathbb{N}$, we obtain
    \[\lim_{n\to \infty} \cosh{\widetilde\theta_{mn}} = \lim_{n\to\infty} \frac{z_{mn}-y_m}{t_n} \, .\]
    By the reverse triangle inequality, we can now estimate
    \[z_{mn}-y_m=\tau(p,b_m)+z_{mn}-(\tau(p,b_m)+y_m)\leq l(t_n)-l(0) \, .\]
    Note that this estimate is independent of $m$. Hence altogether, we have
    \[\cosh\theta =\lim_{m\to\infty}\lim_{n\to\infty}\cosh \widetilde\theta_{mn}=\lim_{m\to\infty}\lim_{n\to\infty}\frac{z_{mn}-y_m}{t_n} \leq \liminf_{n\to\infty} \frac{l(t_n)-l(0)}{t_n} \, .\]

    If $\sigma=-1$, this argument becomes slightly more subtle. After choosing a sequence $(b_m)_{m\in\mathbb N}$ converging to $a$, the comparison angles $\widetilde\theta_{mn}=  \widetilde\measuredangle^{K,S}_a (b_m, c_n)$ will once again be monotonically increasing and bounded from above by $\theta$. However, this means that $\widetilde\theta_{mn}$ will lie in the non-compact interval $(-\infty,\theta]$. Consequently, there will not be a $T_\varepsilon>0$ independent of $\widetilde\theta\in(-\infty,\theta]$ such that \eqref{eq:cosh} holds. However, it is enough to consider angles in the compact interval $[\theta_0,\theta]$, where $\theta_0:=\widetilde\measuredangle_a^{K,S}(b_0,\gamma(\delta))$, since by the curvature bound, $\widetilde\theta_{nm}\geq\theta_0$ for all $n,m\in\mathbb N$. $T_\varepsilon$ will then also depend on $b_0$, but this is irrelevant for the rest of the proof, which works completely analogously to the case $\sigma=1$. Indeed, in the end, we obtain
    \[-\cosh\theta\leq\liminf_{n\to\infty}\frac{l(t_n)-l(0)}{t_n} \, .\]

Finally, let $U$ be a $(\geq K)$-curvature comparison neighbourhood. The comparison angles $\widetilde\theta_{mn}$ are now no longer monotonically increasing and bounded above by $\theta$, but are monotonically decreasing and bounded below by $\theta$. Since we only used the curvature bound to deduce the monotonicity and subsequent convergence of this sequence, the proof for $\geq K$ works almost analogously, with some small technical adaptations.
\end{proof}

To complete the proof of the first variation formula, that is, to show equality in \eqref{eq:constantfirstvariation} for general spaces, we need to prove the reverse inequality: 
\[\limsup_{t\searrow 0}\frac{l(t)-l(0)}{t}\leq \sigma\cosh(\ma_a(\gamma,\beta_0)) \, .\]
In order to achieve this, we first prove the following technical lemma in the comparison space $\mathbb L^2(K)$.

\begin{lem}[First variation formula, second inequality in the model space]
\label{lem:firstvargeq}
     Let $K\in\mathbb R$ and $\triangle(a, b, c)$ be a timelike triangle in $\mathbb L^2(K)$ satisfying size bounds such that either $b\ll a\ll c$ or $a\ll c\ll b$. Moreover, let $\theta:=\ma^S_a(b,c)$. Calling the side lengths of the triangle $t:=\tau_s(a,c),\ z:=\tau_s(b,c)$ and $y:=\tau_s(b,a)$, we obtain
    \begin{equation*}
        \sigma\cosh(\theta)\geq\frac{z-y}{t} \, ,
    \end{equation*}
    where 
    \[\sigma:=\begin{cases}
    +1 & b\ll a\ll c,\\
    -1 & a\ll c\ll b.
\end{cases}\ \]
\end{lem}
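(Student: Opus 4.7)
My plan is to rewrite the inequality as $y + \sigma t\cosh\theta \geq z$ and verify it directly using the Lorentzian law of cosines in $\lm{K}$ recalled in the proof of \Cref{pop: First variation formula in the model space}. First I would establish non-negativity of the left-hand side: for $\sigma = +1$ this is immediate, while for $\sigma = -1$ (the case $a \ll c \ll b$) the chronological condition $c \ll b$ forces $y > t\cosh\theta$. In $\lm{0}$ one sees this by placing $a$ at the origin with $\beta$ along the time axis toward $b = (y, 0)$ and $\gamma(s) = (s\cosh\theta, s\sinh\theta)$: the requirement that $b - c$ be future timelike forces $y - t\cosh\theta > t|\sinh\theta| \geq 0$. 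Analogous coordinate computations handle the case $K \neq 0$.

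Once both sides are non-negative, the case $K = 0$ is finished by squaring: using $z^2 = y^2 + t^2 + 2\sigma yt\cosh\theta$, one computes
\[
(y + \sigma t\cosh\theta)^2 - z^2 = t^2(\cosh^2\theta - 1) = t^2\sinh^2\theta \geq 0.
\]
For $K \neq 0$, I would exploit strict monotonicity of $\cosh$ on $[0,\infty)$ (for $K > 0$) or of $\cos$ on $[0, \pi]$ combined with the size bound (for $K < 0$) to reduce the inequality to a comparison between $\cosh(\sqrt{K}z)$ (resp.\ $\cos(\sqrt{|K|}z)$) and the corresponding value at $\sqrt{|K|}(y + \sigma t\cosh\theta)$. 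Expanding via the sum formulas and the respective law of cosines, the difference decomposes into terms of the form $\cosh\theta\sinh(\sqrt{K}t) - \sinh(\sqrt{K}t\cosh\theta)$ and $\cosh(\sqrt{K}t\cosh\theta) - \cosh(\sqrt{K}t)$ (and their trigonometric analogues), whose signs follow from elementary inequalities like $\sinh(Tu) \geq u\sinh T$ (and $\sin(Tu) \leq u\sin T$ when $Tu \leq \pi$) for $u \geq 1$.

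The main obstacle will be the $K < 0$ case, in which the two contributions can have opposite signs depending on the sign of $\cos(\sqrt{|K|}y)$; there one must additionally invoke the implicit constraint from the law of cosines---namely that its right-hand side lies in $[-1, 1]$ precisely when the triangle actually exists in the model space---to ensure that the combined expression has the correct sign. An alternative route that works cleanly for $K \geq 0$ is to show concavity of $s \mapsto \tau_s(b, \gamma(s))$ on $[0, t]$ by a direct second-derivative computation (with $z'' \leq 0$ proportional to $-\sinh^2\theta$) and then combine with \Cref{pop: First variation formula in the model space} to conclude $(z - y)/t \leq z'(0) = \sigma\cosh\theta$ as the slope of a chord of a concave function being bounded above by its right derivative at the left endpoint.
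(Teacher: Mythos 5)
Your $K=0$ argument is correct and cleaner than the paper's: after verifying $y + \sigma t\cosh\theta \geq 0$ (the coordinate check for $\sigma=-1$ is correct), squaring gives $(y+\sigma t\cosh\theta)^2 - z^2 = t^2\sinh^2\theta \geq 0$ directly, with no need for the paper's chain of reverse-triangle estimates. For $K>0$, $\sigma=+1$ your decomposition is also sound and avoids the paper's mean-value-theorem detour. However, the ``signs of the brackets'' summary is insufficient for $K>0$, $\sigma=-1$: there the expansion puts $\cosh(\sqrt{K}y)$ and $-\sinh(\sqrt{K}y)$ in front of the two non-negative brackets, so the two contributions compete, and you need the quantitative inequality $\cosh(Tu)-\cosh T\geq\sinh(Tu)-u\sinh T$ (equivalently $e^{-Tu}+u\sinh T\geq\cosh T$ for $u=\cosh\theta\geq 1$, which follows since $u\mapsto e^{-Tu}+u\sinh T$ is convex, equals $\cosh T$ at $u=1$, and has positive derivative there). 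This is true but requires more than a sign check. Your concavity alternative handles both signs of $\sigma$ at once for $K\geq 0$ and is the cleanest route: one computes $(z')^2-1 = \sinh^2(\sqrt{K}y)\sinh^2\theta/\sinh^2(\sqrt{K}z) \geq 0$, hence $z''\leq 0$.

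Your caution about $K<0$ is well founded, and I do not think the ``implicit constraint from the law of cosines'' closes the gap. The same second-derivative computation gives $z'' = \sqrt{|K|}\cot(\sqrt{|K|}z)\bigl(1-(z')^2\bigr)$ with $(z')^2\geq 1$, so $z$ is concave only while $\sqrt{|K|}z<\pi/2$ and convex beyond. Concretely, in $\lm{-1}$ with $y=2$, $t=\tfrac12$, $\theta=\tfrac12$, the law of cosines yields $z\approx 2.60 < \pi = D_{-1}$, yet $(z-y)/t\approx 1.20 > \cosh(\tfrac12)\approx 1.13$, so the stated inequality fails even though the triangle satisfies the size bound. (The paper's own argument shares this gap: after the mean-value step it needs $\sin\lambda\geq\sin(t+y)$ with $\lambda\geq t+y$, which fails once $t+y>\pi/2$.) As written, \Cref{lem:firstvargeq} therefore requires an additional smallness hypothesis for $K<0$, e.g.\ $z<D_K/2$; this is available where the lemma is actually invoked in the proof of \Cref{fvf}, since there $t=t_n\to 0$, but it should be made explicit in the statement.
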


\begin{proof}
    We first show the proof for $K=0$ and then for general $K\in\R$.
    By the Lorentzian law of cosines, we have
    \begin{align*}
        \sigma \cosh(\theta)&=\frac{z^2-y^2-t^2}{2yt}\\
        &=\frac{z-y}{t} \frac{z+y}{2y}-\frac{t}{2y} - \frac{z-y}{t} + \frac{z-y}{t}\\
        &=\underbrace{\sigma\frac{z-y}{t}}_{\geq1}\underbrace{\sigma\left(\frac{z+y}{2y}-1\right)}_{=\sigma\frac{z-y}{2y}\geq\frac{t}{2y}}-\frac{t}{2y}+\frac{z-y}{t}\\
        &\geq\frac{t}{2y}-\frac{t}{2y}+\frac{z-y}{t}=\frac{z-y}{t} \, ,
    \end{align*}
    where we used multiple times that $\sigma(z-y)\geq t$ by the reverse triangle inequality. \\

    For $K=1$ and $\sigma=+1$, we must prove
    \begin{equation*}
        \frac{z-y}{t} \leq 
        \sigma \cosh(\theta) =\frac{\cosh(z) - \cosh(t)\cosh(y)}{\sinh(t) \sinh(y)} 
        =\frac{\cosh(z) - \cosh(t+y)}{\sinh(t) \sinh(y)} +1 \, , 
    \end{equation*}
    where we have used the addition theorem for $\cosh(t+y)$. By the reverse triangle inequality, $z-(t+y)\geq 0$, and hence, the above is equivalent to 
    \begin{equation}\label{eq:equivalentformulation}
    \frac{\cosh(z)-\cosh(t+y)}{z-(t+y)} \geq \frac{\sinh(t)\sinh(y)}{t} \, .
    \end{equation}
    We observe that the left hand side is a difference quotient of $\cosh$. The mean value theorem states that there is a $\lambda\in[t+y,z]$ such that 
    \[\sinh(\lambda)=\frac{d}{d s}\bigg|_{s=\lambda}\cosh(s) = \frac{\cosh(z)-\cosh(t+y)}{z-(t+y)} \, .\]
    Since $\sinh$ is monotonically increasing in $\R$ and $\lambda\geq t+y$, to prove \eqref{eq:equivalentformulation}, it is enough to show
    \begin{equation}\label{eq:almostequivalent}
    \sinh(t+y) \geq \frac{\sinh(t)\sinh(y)}{t} \, .
    \end{equation}
    By the addition theorem for $\sinh(t+y)$ and the fact that for $t>0$ $\frac{\sinh(t)}{t}\leq \cosh(t)$, we see
    \[\sinh(t+y)=\sinh(t)\cosh(y)+\cosh(t)\sinh(y)\geq\frac{\sinh(t)\sinh(y)}{t}\]
    proving \eqref{eq:almostequivalent} and subsequently \eqref{eq:equivalentformulation}. 

    In fact, the equality holds for arbitrary $K>0$ by substituting $\sqrt{K}t$, $\sqrt{K}y$ and $\sqrt{K}z$ for $t$, $y$ and $z$ respectively:
    \[ \sigma \cosh(\theta) =\frac{\cosh(\sqrt{K}z) - \cosh(\sqrt{K} t)\cosh(\sqrt{K} y)}{\sinh(\sqrt{K} t) \sinh(\sqrt{K} y)} \geq \frac{\sqrt{K} z - \sqrt{K} y}{\sqrt{K} t} = \frac{z-y}{t} \, .\]

    The other three cases ($K=\pm1, \sigma=\pm1$) work similarly by using different inequalities.
    Instead of repeating the computations, we only mention the differences to the previous case. 
    
    For $K=1,\sigma=-1$, we initially use the addition theorem for $\cosh(y-t)$ instead of $\cosh(y+t)$ and apply the reverse triangle inequality $y\geq t+z$. At the end, instead of $\frac{\sinh(t)}{t}\leq\cosh(t)$, we use $\frac{\sinh(t)}{t}\geq 1$, which then yields the same result. 
    In the case $K=-1$, the inequality we need to prove reads 
    \[\frac{\cos(t)\cos(y) - \cos(z)}{\sin(t) \sin(y)} \geq \frac{z-y}{t} \, .\]
    We again distinguish the cases $\sigma=\pm1$. At the beginning, if $\sigma=+1$, we apply the addition theorem to $\cos(t+y)$ while for $\sigma=-1$ we apply it to $\cos(y-t)$. At the end, if $\sigma=+1$, we use that $\frac{\sin(t)}{t}\leq 1$ and if $\sigma=-1$, we use that $\frac{\sin(t)}{t}\geq \cos(t)$ for $t\in[0,\pi]$.  

    Finally, for arbitrary $K\in\R$, we note that in the case $K>0$ and $\sigma=+1$, the inequalities do not change when scaling all the side lengths by $\sqrt{|K|}$, which finishes the proof.
\end{proof}

Before we can proceed with showing the first variation formula, we state a lemma that will be needed in its proof.

\begin{lem}[Gauss--Bonnet formula for timelike triangles]
\label{gbtt}
    Let $(M,g)$ be a simply connected, two-dimensional Lorentzian $C^2$-manifold and denote the sectional curvature by the continuous function $K : M \to \mathbb{R}$ (with the convention that on de Sitter space we have $K \equiv +1$). 
    Given a geodesic timelike triangle $\triangle (a,b,c)$, $a \ll b \ll c$, with angles $\theta_1,\theta_2,\theta_3$ and interior $D \subseteq M$, the following special case of the Gauss--Bonnet formula holds:
    \begin{equation}
        \theta_1 + \theta_2 + \theta_3 = \int_D K \, \mathrm{d}\mathrm{A} \, .
    \end{equation}
\end{lem}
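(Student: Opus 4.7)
The plan is to establish this Lorentzian Gauss--Bonnet identity via Stokes' theorem applied to the connection $1$-form of a suitable Lorentzian orthonormal frame on $D$, in the spirit of the classical Riemannian proof but with hyperbolic (Lorentzian) boost angles in place of Euclidean rotation angles.

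First I would use the fact that $M$ is two-dimensional and simply connected to conclude it is time-orientable, and that $D$ is contractible to pick a smooth future-directed unit timelike vector field $T$ on a neighbourhood of $\overline D$. Let $N$ be the unique smooth unit spacelike vector field such that $(T,N)$ is a positively oriented Lorentzian frame. Define the connection $1$-form $\omega$ by $\nabla_V T = \omega(V)\,N$; metric compatibility then forces $\nabla_V N = \omega(V)\,T$, and Cartan's structure equation gives $d\omega = K\,\mathrm{d}A$, where $\mathrm{d}A$ is the area form of $(M,g)$.

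Next, Stokes' theorem yields
\[
\int_D K\,\mathrm{d}A \;=\; \int_{\partial D} \omega,
\]
with $\partial D$ oriented as the boundary of $D$. Along each geodesic side, I would write the (parallel) unit velocity as $v = \cosh(\phi) T + \sinh(\phi) N$ so that $\phi$ records the local hyperbolic angle of $v$ to the reference field $T$. Differentiating and using $\nabla_v v = 0$ together with the frame equations produces $\omega(v) = -\dot\phi$, so the integral of $\omega$ along the side equals $\phi_{\mathrm{start}} - \phi_{\mathrm{end}}$. Summing over the three sides, the boundary integral telescopes through the vertices, where the discrete jump of $\phi$ between the incoming and outgoing tangent directions is precisely the signed Lorentzian angle at that vertex.

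Finally, identifying each such jump with $\theta_i$ (resp.\ its sign-flipped version at the middle vertex $b$) and comparing with the Minkowski case $K\equiv 0$ to calibrate conventions against \Cref{lem:angleSumMink} delivers the desired identity $\theta_1+\theta_2+\theta_3 = \int_D K\,\mathrm{d}A$. I expect the main obstacle to lie not in the local differential-geometric computation---which is essentially Cartan plus Stokes---but in the careful bookkeeping of orientations and signed hyperbolic angles at the middle vertex $b$, where both sides $[a,b]$ and $[b,c]$ are future-directed and yet the boundary orientation forces the boost parameter $\phi$ to be compared across a reversal of time orientation of the tangent to $\partial D$. This is exactly the phenomenon already recorded by the relation $\measuredangle_a(b,c)+\measuredangle_c(a,b)=\measuredangle_b(a,c)$ of \Cref{lem:angleSumMink}, which also serves as the sanity check that the chosen sign conventions are internally consistent.
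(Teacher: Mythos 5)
Your route is genuinely different from the paper's: you run the classical Chern-style argument directly (an orthonormal frame $(T,N)$, the connection form $\omega$ with $d\omega = K\,\mathrm{dA}$, Stokes, and telescoping of the boost parameter along the geodesic sides), whereas the paper imports the ``universal'' complex-angle Gauss--Bonnet formula of Jee and merely translates the complex outer angles $\hat\theta_j$ into hyperbolic ones ($\theta_2=-i\hat\theta_2$ at $b$, and $\theta_j=i\pi-i\hat\theta_j$ at $a$ and $c$), so that the $2\pi$ on the right-hand side is cancelled by the two $i\pi$'s. What your approach buys is a self-contained proof with no appeal to analytic continuation of angles; what the paper's buys is that all the branch and orientation subtleties are already packaged in the cited formula. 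Your interior computation is sound: in Lorentzian signature $\nabla_V N=\omega(V)\,T$ indeed carries no minus sign, and $\omega(v)=-\dot\phi$ along a future-directed geodesic side is correct.

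The gap is in the vertex bookkeeping, and you have located it at the wrong vertex. Traversing $\partial D$ as a loop, the incoming and outgoing tangents at $b$ are \emph{both} future-directed (or both past-directed, depending on the chosen orientation of the loop), so they lie on the same branch of the unit timelike hyperboloid and the jump of $\phi$ there is an honest finite boost equal to $\pm\theta_2$. The delicate vertices are $a$ and $c$: there the boundary tangent switches between the future and the past component of the set of unit timelike vectors, which are \emph{disconnected}, so there is no single real parameter $\phi$ whose jump you can read off. You must fix a separate parametrization of the past branch (say $v=-\cosh\phi\,T+\sinh\phi\,N$), note that the geodesic equation then gives $\omega(v)=+\dot\phi$ (the sign flips on the reversed side), and verify that the two branch-crossing contributions reproduce $\theta_1$ and $\theta_3$ with no leftover additive constant. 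This is precisely the point where the Riemannian total turning of $2\pi$ disappears in the timelike case, and it is the step the paper's proof handles via the $i\pi$ shifts at $a$ and $c$. Calibrating against \Cref{lem:angleSumMink} in the flat case, as you propose, will fix the signs once the branch issue is made explicit, but as written the claim that ``the discrete jump of $\phi$ between the incoming and outgoing tangent directions is precisely the signed Lorentzian angle at that vertex'' does not yet make sense at $a$ and $c$.
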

\begin{figure}
\begin{tikzpicture}[scale=0.6,>=Stealth, every node/.style={font=\small}]
  \coordinate (A) at (0,3);
  \coordinate (B) at (2,0);
  \coordinate (C) at (0,-3);
  \fill (A) circle (1.8pt) node[left=2pt] {$c$};
  \fill (B) circle (1.8pt) node[right=2pt]  {$b$};
  \fill (C) circle (1.8pt) node[right=2pt] {$a$};

  \tikzset{midarrow/.style={
    postaction={decorate},
    decoration={
      markings,
      mark=at position 0.5 with {\arrow{>}}
    }
  }}

  \draw[thick, midarrow] (A) -- (B);
  \draw[thick, midarrow] (B) -- (C);
  \draw[thick, midarrow] (C) -- (A);

  \def\ext{0.38}

  \coordinate (Aext) at ($(A) + 0.17*(A)-0.17*(C)$);
  \coordinate (Bext) at ($(B) + \ext*(B)-\ext*(A)$);
  \coordinate (Cext) at ($(C) + 0.29*(C)-0.29*(B)$);
  \pic[draw, angle radius=6mm, angle eccentricity=1.2]
       {angle = B--A--Aext};
  \pic[draw, angle radius=8mm, angle eccentricity=1.2]
       {angle = C--B--Bext};
  \pic[draw, angle radius=6mm, angle eccentricity=1.2]
       {angle = A--C--Cext};

  \node at ($(A)+(0.55,0.1)$) {$\hat{\theta}_3$};
  \node at ($(B)+(0.05,-0.75)$) {$\hat{\theta}_2$};
  \node at ($(C)+(-0.4,0.3)$) {$\hat{\theta}_1$};

  \pic[draw, angle radius=10mm, angle eccentricity=1.2]
       {angle = C--A--B};
  \pic[draw, angle radius=8mm, angle eccentricity=1.2]
       {angle = A--B--C};
  \pic[draw, angle radius=10mm, angle eccentricity=1.2]
       {angle = B--C--A};

  \node at ($(A)+(0.35,-1)$) {$\theta_3$};
  \node at ($(B)+(-0.6,0)$) {$\theta_2$};
  \node at ($(C)+(0.36,1.2)$) {$\theta_1$};

  \draw[dashed,->] (A) -- (Aext);
  \draw[dashed,->] (B) -- (Bext);
  \draw[dashed,->] (C) -- (Cext);
\end{tikzpicture}
\caption{The triangle considered as a closed loop, and the outer angles.}
\label{fig:placeholder}
\end{figure}
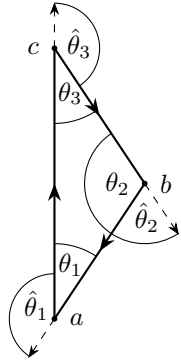
\begin{proof}
    The proof of the Lorentzian Gauss--Bonnet theorem works completely analogously to the Riemannian version. 
    We will not give a detailed proof here, but instead use the results of \cite{jee1984} and translate them into our setting. 
    Objects appearing in the notation of \cite{jee1984}
    will be denoted with a hat.

    The universal Gauss--Bonnet formula in the mentioned paper reads
    \begin{equation*}
        \int_D K\, \mathrm{d}\hat{\mathrm{A}}
        + \int_{\partial D} k_g\, \mathrm{d}l 
        + \sum_j \hat{\theta}_j 
        = 2\pi \, .
    \end{equation*}
    Here $k_g$ denotes the geodesic curvature of the boundary; it vanishes for us because all edges of the triangle are geodesics.

    The volume element $\mathrm{d}\hat{\mathrm{A}}$ is defined in coordinates exactly as in the Riemannian case, namely 
    $\mathrm{d}\hat{\mathrm{A}} = \sqrt{\det g}\, \mathrm{d}x_1 \wedge \mathrm{d}x_2$.
    Since $g$ is Lorentzian, $\det g < 0$, so the real-valued volume element is 
    $\mathrm{dA} = i\, \mathrm{d}\hat{\mathrm{A}}$.

    The quantities $\hat{\theta}_j$ are the outer angles at the corners of $D$, see \Cref{fig:placeholder}.  
    These are not the hyperbolic angles we use, but rather the analytic continuation of Riemannian angles into the complex plane.  
    The angle between two vectors is defined by
    \[
        \hat{\theta}(v_1,v_2) = \arccos \left( \frac{g(v_1,v_2)}{|v_1|_g\, |v_2|_g} \right) \, ,
        \qquad |v|_g = \sqrt{g(v,v)} \, .
    \]
    For timelike vectors, $|v|_g$ is purely imaginary, $|v_1|_g|v_2|_g$ is real and negative, and 
    \[
        \left|\frac{g(v_1,v_2)}{|v_1|_g |v_2|_g}\right| > 1 \, ,
    \]
    so $\arccos$ takes imaginary values.

    If two timelike vectors point in the same time direction (as for the outer angle $\hat{\theta}_2$ at $b$), then 
    $\frac{g(v_1,v_2)}{|v_1|_g|v_2|_g} > 1$, and the angle is required to lie in $i\mathbb{R}_{<0}$ (to fix the branch of $\arccos$).
    Comparing this with our definition of the hyperbolic angle at $b$, we obtain
    \begin{equation}\label{eq:GB1}
        \theta_2 
        = \arccosh\!\left( \frac{g(-v_1,v_2)}{|{-v_1}|_g\,|v_2|_g} \right) 
        = -i\, \hat{\theta}_2 \, .
    \end{equation}

    For the angles $\theta_1$ and $\theta_3$ at the future and past vertices $a$ and $c$, the velocity vectors $v_1$ and $v_2$ are timelike but have opposite time orientations.  
    In the notation of \cite{jee1984}, 
    \[
        \frac{g(v_1,v_2)}{|v_1|_g |v_2|_g} < -1 \, ,
    \]
    and the angles $\hat{\theta}_j$ lie in $\pi + i\mathbb{R}_{>0}$.  
    In our hyperbolic convention this yields
    \begin{equation}\label{eq:GB2}
        \theta_j
        = -\,\arccosh\!\left( -\frac{g(-v_1,v_2)}{|-v_1|_g\, |v_2|_g} \right)
        = i\pi - i\, \hat{\theta}_j \, .
    \end{equation}

    Summing \eqref{eq:GB1} and \eqref{eq:GB2}, we obtain
    \begin{equation*}
        \theta_1 + \theta_2 + \theta_3
        = 2i\pi - i \sum_j \hat{\theta}_j
        = i \int_D {K}\, \mathrm{d}\hat{\mathrm{A}}
        = \int_D K \, \mathrm{dA} \, ,
    \end{equation*}
    as claimed.
\end{proof}

\begin{rem}[Semi-continuity of angles and angle equality along geodesic]
\label{rem:CBBfact}
    In the following proof of the first variation formula we will be using two facts for angles in curvature comparison neighbourhoods:
    \begin{enumerate}[label=(\roman*)]
        \item Let $X$ be regular and let $U$ be a $(\geq K)$- (resp. $(\leq K)$-) comparison neighbourhood. Let $\gamma_n,\beta_n$ be sequences of timelike geodesics in $U$, sharing one endpoint $x_n$ and converging pointwise to timelike geodesics $\gamma,\beta$ in $U$. Then
        \begin{equation*}
        \begin{split}
            \limsup_{n\to\infty}\ma^S_{x_n}(\gamma_n,\beta_n)&\leq\ma_x^S(\gamma,\beta)\\
            (\text{resp. }\liminf_{n\to\infty}\ma_{x_n}^S(\gamma_n,\beta_n)&\geq\ma_x^S(\gamma,\beta)\ )
        \end{split}
        \end{equation*}
        (see \cite[Prop. 4.14]{beran-saemann2023}). Note that in the reference, this is only proved for geodesics $\gamma_n,\beta_n$ sharing the constant endpoint $x$ and not for a varying endpoint $x_n$ as we need it. The proof can, however, be easily adapted to the case where $x_n$ is not fixed. Also, note that in the reference, the space is assumed to be \emph{locally strictly timelike geodesically connected}, which follows from $X$ being regular and the whole setup being inside a comparison neighbourhood. \label{CBBfact1}
        \item Let $X$ be locally causally closed and let $U$ be a $(\geq K)$-comparison neighbourhood. Let $\gamma:[0,1]\to U$ be a future-directed timelike geodesic and let $\beta:[0,1]\to U$ be a timelike geodesic such that $\beta(0)=\gamma(\lambda)$ for some $\lambda\in(0,1)$. Then 
        \begin{equation*}
            \ma_{\beta(0)}(\beta,\gamma\big|_{[0,\lambda]})=\ma_{\beta(0)}(\beta,\gamma\big|_{[\lambda,1]})
        \end{equation*}
        (see \cite[Cor. 4.7]{beran-saemann2023}). \label{CBBfact2}
    \end{enumerate}
\end{rem}

\begin{thm}[First variation formula]
\label{fvf}
Let $X$ be a strongly causal, locally causally closed and regular Lorentzian pre-length space. Let $U\subseteq X$ be a ($\leq K$) or a ($\geq K$)-curvature comparison neighbourhood, let $\gamma:[0,\delta]\to U$ be a future-directed timelike geodesic parametrized by arclength, and let $p\in X $ be timelike related to $a:=\gamma(0)$. 
Let $t_n \to 0$ as $n \to \infty, t_n \geq 0$ such that a sequence $\beta_{t_n}$ of timelike geodesics from $p$ to $\gamma(t_n)$ converges uniformly to some $\beta_0$. 
Define $l(t) = \tau_s(p,\gamma(t))$. If the signed angle $\ma^S_a (\gamma,\beta_0)$ exists and is finite, then
\[ \lim_{n\to\infty}\frac{l(t_n)-l(0)}{t_n} = \sigma\cosh(\ma_a(\gamma,\beta_0)) \, ,\]
where 
\[\sigma:=\begin{cases}
+1 & p\ll a,\\
-1 & p\gg a.
\end{cases}\ \]
\end{thm}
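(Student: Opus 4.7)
The $\liminf$ half of the first variation formula is already provided by \Cref{prop:geq}, so the task reduces to proving
\[
\limsup_{n\to\infty} L_n \;\leq\; \sigma\cosh(\theta), \qquad L_n:=\frac{l(t_n)-l(0)}{t_n},\quad \theta:=\ma_a^S(\gamma,\beta_0).
\]
Write $\overline{\triangle}_n:=\triangle(\bar p,\bar a,\overline{\gamma(t_n)})$ for the comparison triangle of $\triangle(p,a,\gamma(t_n))$ in $\lm{K}$ (which satisfies size bounds for $n$ large, since $l(t_n)\to l(0)<D_K$ by continuity of $\tau$) and let $\widetilde\theta_n$ denote its signed comparison angle at $\bar a$. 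Applied in $\lm{K}$ itself, \Cref{lem:firstvargeq} gives $L_n \leq \sigma\cosh(\widetilde\theta_n)$, so the whole problem reduces to proving $\limsup_n\sigma\cosh(\widetilde\theta_n)\leq\sigma\cosh(\theta)$; by monotonicity of $\cosh$ and a case distinction on the sign of $\sigma$ (which also pins down the signs of $\theta$ and $\widetilde\theta_n$, the latter because the comparison triangle inherits the causal type from the original), this is equivalent to $\limsup_n|\widetilde\theta_n|\leq|\theta|$.

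In the upper curvature case this is essentially immediate: applying \eqref{eq:angle condition} to the geodesics $\beta_0$ (reversed so as to start at $a$) and $\gamma$, with parameters equal to the full length of $\beta_0$ and $t_n$ respectively, produces $\widetilde\theta_n\leq\theta$ as signed angles for every $n$, and the sign analysis just described finishes the case.

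In the lower curvature case the same tool gives $\widetilde\theta_n\geq\theta$, so the direct route fails and I instead prove the stronger statement $\widetilde\theta_n\to\theta$. Denote by $\widetilde\theta_n^{\,p},\widetilde\theta_n^{\,c}$ the signed comparison angles of $\overline{\triangle}_n$ at $\bar p$ and $\overline{\gamma(t_n)}$ respectively, and by $\bar A_n$ its area. \Cref{gbtt} reads
\[
\widetilde\theta_n+\widetilde\theta_n^{\,p}+\widetilde\theta_n^{\,c}=K\,\bar A_n,
\]
and an elementary calculation in $\lm{K}$ using only $l(t_n)\to l(0)$ shows the degeneration of $\overline{\triangle}_n$ forces $\bar A_n\to 0$ and $\widetilde\theta_n^{\,p}\to 0$, so one merely has to show $\widetilde\theta_n^{\,c}\to-\theta$. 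For this I exploit that $\gamma$ passes through $\gamma(t_n)$: by \Cref{rem:CBBfact}\,(ii),
\[
\omega_n:=\ma_{\gamma(t_n)}\bigl(\beta_{t_n},\gamma|_{[0,t_n]}\bigr)=\ma_{\gamma(t_n)}\bigl(\beta_{t_n},\gamma|_{[t_n,\delta]}\bigr),
\]
a form in which both geodesics have bona fide limits at $a$ (by the hypothesis $\beta_{t_n}\to\beta_0$ uniformly and the trivial $\gamma|_{[t_n,\delta]}\to\gamma$). \Cref{rem:CBBfact}\,(i), applied with the common varying endpoint $\gamma(t_n)\to a$, then controls $\omega_n$ in terms of $|\theta|=\ma_a(\beta_0,\gamma)$, and a cosine-law computation at $\overline{\gamma(t_n)}$ in $\lm{K}$ (which identifies the leading-order behaviour of the unsigned comparison angle $\widetilde\omega_n$ there with that of $\widetilde\theta_n$ as $l(t_n)\to l(0)$), together with the CBB triangle comparison $\widetilde\omega_n\geq\omega_n$, transfers this control to $\widetilde\theta_n^{\,c}$ and closes the Gauss--Bonnet loop.

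The hardest part is precisely the full convergence $\omega_n\to|\theta|$ in the CBB case: \Cref{rem:CBBfact}\,(i) only delivers one-sided semicontinuity, and the matching inequality has to be squeezed out of a careful combination of \Cref{rem:CBBfact}\,(ii), CBB angle monotonicity, and the leading-order cosine-law identifications between the various angles of the degenerating comparison triangle.
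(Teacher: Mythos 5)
Your proposal follows the same skeleton as the paper (reduce to a $\limsup$ bound via \Cref{prop:geq}, apply \Cref{lem:firstvargeq} to a comparison triangle in $\lm{K}$, then control the comparison angle via the curvature bound and, in the CBB case, Gauss--Bonnet plus the semicontinuity facts of \Cref{rem:CBBfact}), but there is a genuine gap that the paper's proof is specifically engineered to avoid.

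\textbf{The core problem.} You work with the comparison triangle $\overline{\triangle}_n = \triangle(\bar p,\bar a,\overline{\gamma(t_n)})$ of $\triangle(p,a,\gamma(t_n))$. But the hypothesis only places $a$ and $\gamma$ inside $U$; the point $p \in X$ and the geodesics $\beta_{t_n}$ (and $\beta_0$) need not lie in $U$. Every tool you invoke after \Cref{lem:firstvargeq} requires the relevant triangle or geodesic to sit inside the comparison neighbourhood: \eqref{eq:angle condition} and its CBB counterpart hold only for triangles in $U$, and \Cref{rem:CBBfact}\ref{CBBfact1}--\ref{CBBfact2} are stated for geodesics contained in $U$. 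Applied to $\beta_{t_n}$ (which runs all the way to $p$), they are simply unavailable. This is not a technicality the paper overlooks --- \Cref{rem:assumptions on fvf} explicitly flags that your route (applying \Cref{lem:firstvargeq} to comparison triangles of $\triangle(p,a,c_n)$ and comparing angles directly) works only under the stronger hypothesis $U = X$. The paper's actual proof circumvents this by choosing $b = \beta_0(1-\varepsilon) \in U$, producing points $b_n \to b$ on $\beta_{t_n}$ that stay in $U$, and then transferring the estimate on $\triangle(b_n,a,c_n)$ back to $\frac{l(t_n)-l(0)}{t_n}$ via a reverse triangle inequality $\tau_s(p,b_n) + \tau_s(b_n,c_n) = l(t_n)$. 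That reverse-triangle step is the load-bearing move you are missing; without it you cannot reach the quantity $l$ from data living inside $U$.

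\textbf{A secondary misdiagnosis.} You flag ``the full convergence $\omega_n \to |\theta|$ in the CBB case'' as the hardest remaining point. In fact, once one tracks the signs carefully (the angles $\widetilde\theta_n$, $\widetilde\theta^c_n$, $\theta$ have fixed and opposite signs determined by $\sigma$, and the desired bound $\limsup_n \sigma\cosh(\widetilde\theta_n)\leq\sigma\cosh(\theta)$ unwinds after Gauss--Bonnet into a one-sided $\limsup$ bound on $\ma^S_{c_n}([c_n,\cdot],\gamma|_{[t_n,\delta]})$), the one-sided semicontinuity of \Cref{rem:CBBfact}\ref{CBBfact1} is exactly what is needed --- no matching lower bound and no full convergence is required. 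The paper's chain of inequalities at the end of the $(\geq K)$ case illustrates precisely this. So the part you identified as hardest is actually benign once the sign bookkeeping is done; the genuine obstacle is the containment in $U$ discussed above.
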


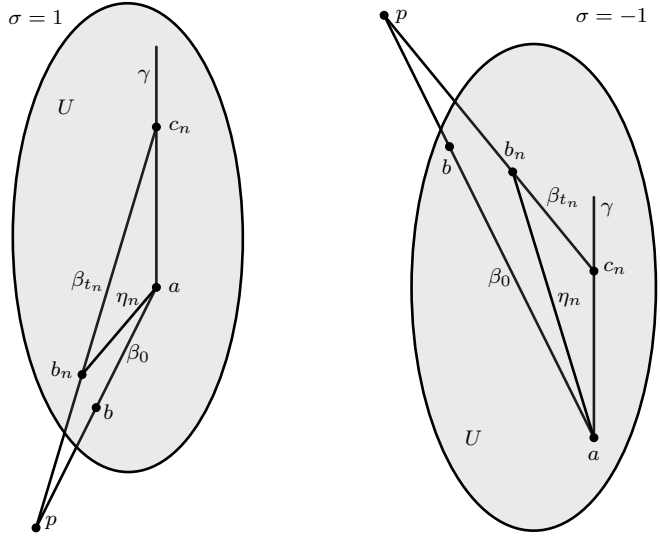
\begin{figure}
\begin{tikzpicture}[line cap=round,line join=round,>=triangle 45,x=0.8cm,y=0.8cm]
\clip(-1.,-0.5) rectangle (4.,9.);
\draw [line width=1.pt] (0.,0.)-- (2.,4.);
\draw [line width=1.pt] (2.,4.)-- (2.,8.);
\draw [line width=1.pt] (0.,0.)-- (2.,6.666666666666666);
\draw [fill=black!40,fill opacity=0.2, rotate around={-89.84216078629612:(1.525120716442052,4.826505919154599)},line width=1.pt] (1.525120716442052,4.826505919154599) ellipse (3.8981730992229418 and 1.9113146769959797);
\draw [line width=1.pt] (0.7645785596787179,2.5485951989290596)-- (2.,4.);
\begin{scriptsize}
\draw [fill=black] (0.,0.) circle (1.5pt);
\draw[color=black] (0.2522537454217871,0.08601238370045172) node {$p$};
\draw [fill=black] (2.,4.) circle (1.5pt);
\draw[color=black] (2.3,4) node {$a$};
\draw[color=black] (1.7,2.9) node {$\beta_0$};
\draw[color=black] (1.8,7.5) node {$\gamma$};
\draw [fill=black] (2.,6.666666666666666) circle (1.5pt);
\draw[color=black] (2.4,6.666) node {$c_n$};
\draw[color=black] (0.87,4.101158931992319) node {$\beta_{t_n}$};
\draw [fill=black] (0.7645785596787179,2.5485951989290596) circle (1.5pt);
\draw[color=black] (0.44,2.6317463257409823) node {$b_n$};
\draw [fill=black] (1.,2.) circle (1.5pt);
\draw[color=black] (1.2,1.9297976284871596) node {$b$};
\draw[color=black] (0.5,7) node {$U$};
\draw[color=black] (0,8.5) node {$\sigma=1$};
\draw[color=black] (1.54,3.8) node {$\eta_n$};
\end{scriptsize}
\end{tikzpicture}
\hspace{1cm}
\begin{tikzpicture}[line cap=round,line join=round,>=triangle 45,x=0.8cm,y=0.8cm]
\clip(9.3,-1.) rectangle (14.3,8.5);
\draw [line width=1.pt] (13.,1.)-- (13.,5.);
\draw [line width=1.pt] (13.,1.)-- (9.513672717571392,8.02451838319535);
\draw [line width=1.pt] (9.513672717571392,8.02451838319535)-- (13.,3.770810611393447);
\draw [fill=black!40, fill opacity=0.2, rotate around={90.:(12.,3.5)},line width=1.pt] (12.,3.5) ellipse (4.0464021784814825 and 2.030608428530651);
\draw [line width=1.pt] (11.65,5.42)-- (13.,1.);
\begin{scriptsize}
\draw [fill=black] (13.,1.) circle (1.5pt);
\draw[color=black] (13,0.7) node {$a$};
\draw[color=black] (13.22413374688328,4.8) node {$\gamma$};
\draw [fill=black] (9.513672717571392,8.02451838319535) circle (1.5pt);
\draw[color=black] (9.8,8) node {$p$};
\draw[color=black] (11.43870624620753,3.652815909255363) node {$\beta_0$};
\draw [fill=black] (13.,3.770810611393447) circle (1.5pt);
\draw[color=black] (13.35,3.827003958101778) node {$c_n$};
\draw[color=black] (12.5,5) node {$\beta_{t_n}$};
\draw [fill=black] (10.6,5.84) circle (1.5pt);
\draw[color=black] (10.55,5.5) node {$b$};
\draw [fill=black] (11.65,5.42) circle (1.5pt);
\draw[color=black] (11.7,5.78) node {$b_n$};
\draw[color=black] (11,1) node {$U$};
\draw[color=black] (13.3,8) node {$\sigma=-1$};
\draw[color=black] (12.6,3.3) node {$\eta_n$};
\end{scriptsize}
\end{tikzpicture}
\caption{Setup of the proof depending on $\sigma$.}
\label{fig: fvf setup}
\end{figure}

\begin{proof}
    By \Cref{prop:geq} it remains to be shown that 
    \begin{equation*}
        \limsup_{n\to\infty}\frac{l(t_n)-l(0)}{t}\leq \sigma\cosh(\ma_a(\gamma,\beta_0)) \, .
    \end{equation*}
    Assume that all curves $\beta_{t_n}$ as well as $\beta_0$ are defined on $[0,1]$ and choose $\varepsilon>0$ such that $b:=\beta_0(1-\varepsilon)$ lies in $U$. Take points $b_n$ on $\beta_{t_n}$ such that they converge to $b$ and, for all $n$, $b_n$ lies in $U$. We claim that 
    the sequence $b_n$ can, in particular, be chosen such that $b\ll b_n$ and $b_n\ll a$ if $\sigma=1$, as well as $b_n \ll b$ and $a \ll b_n$ if $\sigma=-1$ for all $n\in\mathbb N$ large enough.
    
    To see this, define the sets $U_{s,i}:=I(\beta_0(s-1/i),\beta_0(s+1/i))$, which, by strong causality, for sufficiently large $i$ (depending on $s$), form a neighbourhood basis for each $\beta_0(s)$ with $s\in(0,1)$. 
    For every $n$, we let $i(n)$ be the maximal natural number $i$ such that $\beta_{t_n}(s)\in U_{s,i}$ for all $s\in[1/i,1-1/i]$. Note that, by uniform convergence, $i(n)\to\infty$ as $n\to\infty$. We define $b_n:=\beta_{t_n}(1-\varepsilon+\sigma/i(n))$. Clearly $b_n\to b=\beta_0(1-\varepsilon)$. Moreover, for $n$ large enough and assuming $\sigma=1$, 
    \[b_n=\beta_{t_n}(1-\varepsilon+1/i(n))\in U_{1-\varepsilon+1/i(n),i(n)}\subseteq I^+(\beta_0(1-\varepsilon))=I^+(b) \, ,\]
    hence $b_n\gg b$ as desired. The fact that eventually $b_n\ll a$ is now just a consequence of the fact that $b\ll a$ and $\ll$ being an open relation. 
    Similarly, we show $b_n\ll b$ and $a\ll b_n$ in the case of $\sigma=-1$. 
    This proves the claim. 
    
    Setting $c_n := \gamma(t_n)$, by the reverse triangle inequality, we have 
    \begin{equation*}
        l(0)\geq\tau_s(p,b_n)+\tau_s(b_n,a)\geq\tau_s(p,b_n) + \tau_s(b_n,c_n)-\sigma t_n\cosh(\widetilde\ma_a^{S,K}(b_n,c_n)),
    \end{equation*}
    where for the second inequality we used \Cref{lem:firstvargeq}. Since $\tau_s(p,b_n)+\tau_s(b_n,c_n)=\tau_s(p,c_n)=l(t_n)$, we get
    \begin{equation}\label{eq:lhs}
        \frac{l(t_n)-l(0)}{t_n}\leq\sigma \cosh(\widetilde\ma_a^{S,K}(b_n,c_n)) \, .
    \end{equation}
    
    Let us now assume that $U$ is a $(\leq K)$-curvature comparison neighbourhood. First, recall that by the triangle inequality for angles, we have
    \begin{equation*}
        \ma_a(\gamma,\beta_0)\geq\ma_a(\gamma,\eta_n)-\ma_a(\eta_n,\beta_0)\, ,
    \end{equation*}
    and
    \begin{equation*}
        \ma_a(\gamma,\beta_0)\leq\ma_a(\gamma,\eta_n)+\ma_a(\eta_n,\beta_0) \, ,
    \end{equation*}
    where $\eta_n$ is the unique timelike geodesic from $b_n$ to $a$, see \Cref{fig: fvf setup}. Applying this to signed angles, where the first inequality is used for $\sigma=1$ and the second one for $\sigma=-1$, yields
    \begin{equation*}
        \sigma\cosh(\ma_a^S(\gamma,\beta_0))\geq\sigma\cosh(\ma_a^S(\gamma,\eta_n)+\ma_a^S(\eta_n,\beta_0)) \, .
    \end{equation*}
    Next we claim that the geodesics $\eta_n$ converge pointwise to $\beta_0\big|_{[1-\varepsilon,1]}$. To see this, first reparametrize all these curves so that they are defined on $[0,1]$ and for clarity write $\hat\beta_0$ instead of $\beta_0\big|_{[1-\varepsilon,1]}$. Choose $s\in(0,1)$, then it suffices to show that for any $\delta>0$ and $n\in\mathbb N$ large enough, we have $\eta_n(s)\in I(\hat\beta_0(s-\delta),\hat\beta_0(s+\delta))$. By the first part of the proof, we can consider the timelike triangles $\triangle(a,b_n,b)$ and their comparison triangle $\triangle(\tilde a,\tilde b_n,\tilde b)$ in $\mathbb L^2(K)$. Since $b_n\to b$ as $n\to\infty$ and the time separation function on $U$ is continuous, we have 
    \[\tau_s(b,b_n)+\tau_s(b_n,a)\to\tau_s(b,a)\hspace{1cm}(n\to\infty)\]
    so the side $[\tilde b_n,\tilde a]$ converges to the side $[\tilde b,\tilde a]$ in the comparison space. In particular, for $n$ large enough, the comparison point $[\tilde b_n,\tilde a](s)$ is timelike related to both $[\tilde b,\tilde a](s-\delta)$ and $[\tilde b,\tilde a](s+\delta)$. 
    By the upper curvature bound, we must now also have $\eta_n(s)\in I(\hat\beta_0(s-\delta),\hat\beta_0(s+\delta))$, as desired. 
    As a result, we can use semi-continuity of angles to obtain
    \begin{equation*}
        0=\ma_a^S(\beta_0,\beta_0)\leq\liminf_{n\to\infty}\ma_a^S(\eta_n,\beta_0)\leq0 \, .
    \end{equation*}
    Hence, all inequalities are actually equalities, and in particular, $\ma_a^S(\eta_n,\beta_0)\to0$ as $n\to\infty$. Altogether, we obtain 
    \begin{equation*}
        \begin{split}
            \sigma\cosh(\ma_a^S(\gamma,\beta_0))&\geq\limsup_{n\to\infty}\sigma\cosh(\ma_a^S(\gamma,\eta_n)+\ma_a^S(\eta_n,\beta_0))\\
            &=\limsup_{n\to\infty}\sigma\cosh(\ma_a^S(\gamma,\eta_n))\\
            &\geq\limsup_{n\to\infty}\sigma\cosh(\widetilde\ma_a^{S,K}(b_n,c_n)) \, ,
        \end{split}
    \end{equation*}
    where the last inequality follows from angle monotonicity in $(\leq K)$-curvature comparison neighbourhoods. Combining this with \eqref{eq:lhs} finishes this part of the proof.

    Finally, let $U$ be a $(\geq K)$-comparison neighbourhood. Consider the triangles $\triangle (a,b_n, c_n)$. We know that $\tau_s(a,c_n)\to 0$ and 
    \[\lim_{n\to\infty}\tau_s(b_n,a)=\lim_{n\to\infty}\tau_s(b_n,c_n)=\tau_s(b,a)\, ,\]
    so by the Lorentzian law of cosines, we must have $\widetilde\ma_{b_n}^{S,K}(a,c_n)\to 0$ as $n\to\infty$. Now note that the area of the comparison triangles $\triangle(\tilde a,\tilde b_n, \tilde c_n)$ in $\mathbb L^2(K)$ goes to zero too, hence the Gauss-Bonnet formula \Cref{gbtt} for timelike triangles yields 
    \[\limsup_{n\to\infty}\widetilde\ma_a^{S,K}(b_n,c_n)=-\liminf_{n\to\infty}\widetilde\ma_{c_n}^{S,K}(a,b_n) \, .\]
    Using this fact, as well as angle monotonicity, gives
    \begin{align*}
        \limsup_{n\to\infty}\sigma\cosh(\widetilde\ma_a^{S,K}(b_n,c_n))&=\limsup_{n\to\infty}\sigma\cosh(\widetilde\ma_{c_n}^{S,K}(a,b_n))\\\
        &\leq\limsup_{n\to\infty}\sigma\cosh(\ma^S_{c_n}(a,b_n))\\
        &=\limsup_{n\to\infty}\sigma\cosh(\ma^S_{c_n}(b_n,\gamma(\delta)))\\
        &=\sigma\cosh\left(\limsup_{n\to\infty}\ma^S_{c_n}(b_n,\gamma(\delta))\right)\\\
        &\leq\sigma\cosh(\ma_a^S(b,\gamma(\delta)))=\sigma\cosh(\ma_a(\beta_0,\gamma)) \, ,
    \end{align*}
    where \Cref{rem:CBBfact}\ref{CBBfact2} was used to go from the second to the third line and semi-continuity of angles to go from the fourth to the fifth line. Combining this chain of inequalities with \eqref{eq:lhs} finishes the proof.
\end{proof}

\begin{rem}[On assumptions in the first variation formula]\label{rem:assumptions on fvf}
    The previous theorem only holds for strongly causal, locally causally closed and regular Lorentzian pre-length spaces. For curvature bounded from above by $K$, one could drop these assumptions and instead assume $U=X$, i.e.\ the entire space is a $(\leq K)$-curvature comparison neighbourhood. In this case, one can apply \Cref{lem:firstvargeq} directly to (comparison triangles of) 
    the triangles $\triangle (p,a,c_n)$. This yields 
    \[\sigma\cosh(\ma_a^S(\gamma,\beta_0))\geq\sigma\cosh(\widetilde\ma_a^{S,K}(p,c_n))\geq\frac{l(t_n)-l(0)}{t_n} \, .\]
    Note that this argument works for any choice of $t_n$ and does not need to assume the uniform convergence of the curves $\beta_{t_n}$. In particular, combining this with \Cref{prop:geq}, the first variation formula can be written as
    \[\lim_{t\searrow0}\frac{l(t)-l(0)}{t}=\sigma\cosh(\ma_a(\gamma,\beta_0)) \, .\]
\end{rem}

\section{Rigidity for upper curvature bounds}
In this section, we prove a rigidity result for the case of equality in the triangle comparison and angle comparison conditions for spaces with timelike curvature bounded above, analogous to \cite[Proposition~9.1.19]{burago-burago-ivanov2001}. In particular, we obtain a criterion to detect flat regions in a Lorentzian pre-length space with timelike curvature bounded above by $0$ (\Cref{pop:quadrangle rigidity}), analogous to \cite[Lemma~9.2.30]{burago-burago-ivanov2001}. This criterion will be a key ingredient in the proof of \Cref{thm:main}, since it allows us to prove that, under our curvature assumption, weakly parallel lines are actually synchronised-parallel lines.

For the following results, recall that for any $S\subseteq \mathbb{R}^n$, the \textit{convex hull} of $S$ is the set

\[
\conv(S) := \left\{\sum_{i=0}^N a_ix_i:N\in\mathbb N,\ x_i\in S,\ a_i\geq 0\ \text{with}\ \sum_{i=0}^N a_i=1\right\}.
\]

\begin{pop}[Rigidity of upper curvature bounds]
\label{pop:triangleCBequality}
Let $X$ be a \LpLSn.
Let $\triangle (a,b,c)$ be a timelike triangle with a certain order of the vertices in a causally closed ($\leq K$)-curvature comparison neighbourhood $U \subseteq X$. 
Let $\triangle (\bar a, \bar b, \bar c)$ be a comparison triangle in $\lm{K}$.
Then the following are equivalent:
\begin{enumerate}[label=(\roman*)]
\item The angles at $a$ and $\bar a$ are equal, i.e.\ $\ma_a(b,c) =\widetilde{\ma}_a^K(b,c)$.
\item All the angles of $\triangle (a,b,c)$ are equal to their corresponding angles in $\triangle (\bar a, \bar b, \bar c)$. 
\item For all points $x\in[a,c]$, $x\neq a,c$, its comparison point $\bar x$ satisfies $\tau_s(b,x)=\tau_s(\bar b,\bar x)$.
\item There exists a point $x\in[a,c]$, $x\neq a,c$ for which its comparison point $\bar x$ satisfies $\tau_s(b,x)=\tau_s(\bar b,\bar x)>0$.
\item There is an isometry $f$ from the `filled-in triangle' $\conv(\triangle(\bar a,\bar b,\bar c))$ into $U$ mapping the sides of $\triangle(\bar a,\bar b,\bar c)$ onto the corresponding sides of $\triangle(a,b,c)$ (`there is a fill-in of $\triangle(a,b,c)$ which is a part of the model space'). The image of $f$ is geodesically convex, i.e.\ for any $x\leq y$ in the image of $f$ there exists a geodesic connecting them that stays inside.
\end{enumerate}
\end{pop}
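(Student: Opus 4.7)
The plan is to establish the cycle $(iv)\Rightarrow(iii)\Rightarrow(ii)\Rightarrow(i)\Rightarrow(iv)$ together with the fill-in $(iii)\Rightarrow(v)$, noting that $(v)\Rightarrow(iii)\Rightarrow(iv)$ and $(ii)\Rightarrow(i)$ are immediate (an isometry preserves $\tau$, and the strong statements subsume the weaker ones).

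The core argument is a subdivision analysis for $(iv)\Rightarrow(iii)$. Given $x\in[a,c]$ with $\tau_s(b,x)=\tau_s(\bar b,\bar x)>0$, I would subdivide $\triangle(a,b,c)$ via the (unique, by the upper curvature bound) geodesic $[b,x]$ into sub-triangles $\triangle_1=\triangle(a,b,x)$ and $\triangle_2=\triangle(x,b,c)$. Because $\bar x\in[\bar a,\bar c]$ and $\tau_s(b,x)=\tau_s(\bar b,\bar x)$, the comparison triangles of $\triangle_1$ and $\triangle_2$ glue isometrically along $[\bar b,\bar x]$ into $\triangle(\bar a,\bar b,\bar c)$. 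A short computation in $\lm{K}$ using the Lorentzian cosine law shows that the unsigned hyperbolic angles at $\bar x$ toward $\bar b$ agree from either side of the line $[\bar a,\bar c]$; call this value $\theta$. Writing $\alpha,\beta,\gamma$ for the geodesics from $x$ to $a,b,c$, I would apply the signed angle inequality \eqref{eq:angle condition} in each sub-triangle. The convention \eqref{eq:signed angle} flips sign with time orientation, and since $\beta$ is paired with exactly one of $\alpha$ (past) and $\gamma$ (future) sharing its time orientation, the two resulting bounds go in opposite directions: for $\beta$ future-directed one obtains $\theta\leq\ma_x(\alpha,\beta)$ and $\ma_x(\beta,\gamma)\leq\theta$ (and the reverse if $\beta$ is past-directed). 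Combined with part (iii) of \Cref{pop:triangle inequality angles}, applied to the geodesic $[a,c]$ extending through $x$ (in the orientation matching $\beta$, with the time-reversed version used in the opposite case), this squeezes $\ma_x(\alpha,\beta)=\ma_x(\beta,\gamma)=\theta$. Iterating the same analysis at any other $y\in[a,c]$, with the now-established angle equalities forcing $\tau_s(b,y)=\tau_s(\bar b,\bar y)$ via the induced rigidity of the sub-triangles, yields $(iii)$, and running a symmetric subdivision from $a$ (or $c$) to a point on the opposite side then produces the angle equalities at $a$ and $c$, i.e.~$(ii)$ and in particular $(i)$.

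For $(i)\Rightarrow(iv)$, I would combine the angle equality at $a$ with the Lorentzian cosine law in $\lm{K}$, which expresses $\tau_s(\bar b,\bar x)$ as a continuous monotone function of $\widetilde\ma_a^K(b,x)$ at fixed adjacent side lengths, and the upper bound $\tau_s(b,x)\leq\tau_s(\bar b,\bar x)$ from triangle comparison, forcing equality at every interior $x\in[a,c]$. For the fill-in $(iii)\Rightarrow(v)$, every $\bar z\in\conv(\triangle(\bar a,\bar b,\bar c))$ lies on a unique timelike geodesic $[\bar b,\bar x]$ with $\bar x\in[\bar a,\bar c]$; by $(iii)$ the geodesic $[b,x]$ in $X$ has matching $\tau$-length, so I set $f(\bar z)$ to be the point on $[b,x]$ at the same $\tau$-arclength parameter from $b$. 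To verify $f$ is a $\tau$- and causality-preserving isometry, I apply the already-established equivalences to each sub-triangle $\triangle(b,x_1,x_2)$ for $x_1,x_2\in[a,c]$: all three of its sides match the comparison (two by $(iii)$, the third because $[a,c]$ is a geodesic), and $(iv)$ is automatic at any interior point of $[x_1,x_2]$ again by $(iii)$, so $(iii)$ for the sub-triangle gives $\tau_s(f(\bar z_1),f(\bar z_2))=\tau_s(\bar z_1,\bar z_2)$. Geodesic convexity of the image follows from the uniqueness of geodesics in a $(\leq K)$-comparison neighbourhood.

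The main obstacle is the careful bookkeeping of signed versus unsigned angles and time orientations in the subdivision argument: the crucial two-sided squeeze depends on a case split on whether $\beta$ at $x$ is future- or past-directed, invoking one or the other orientation-swapped form of part (iii) of \Cref{pop:triangle inequality angles}. A secondary obstacle is the well-definedness of $f$ on $\conv(\triangle(\bar a,\bar b,\bar c))$ and the coherent propagation of rigidity to nested sub-triangles — amounting to a non-vacuous use of the uniqueness of comparison structures inside a $(\leq K)$-comparison neighbourhood, together with a careful verification that the chosen parametrization by $[\bar b,\bar x]$-foliation is compatible with every pair of interior points we wish to compare.
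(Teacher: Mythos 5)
Your treatment of $(iv)\Rightarrow(iii)$ is essentially the paper's: subdivide at $x$, use angle additivity in $\lm{K}$, the two oppositely oriented instances of \eqref{eq:angle condition}, and the triangle inequality for angles along the geodesic $[a,c]$ to squeeze the three angles at $x$, then invoke the rigidity ``angle equality at a vertex implies distance equality along the adjacent side'' on the two sub-triangles. (That rigidity must therefore be proved first and for an arbitrary apex; your ``$(i)\Rightarrow(iv)$'' paragraph in fact proves it, since the monotonicity-plus-law-of-cosines argument forces equality at \emph{every} interior point, so the ordering of your cycle just needs rearranging, not repairing.) The genuine gap is $(iii)\Rightarrow(ii)$. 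A ``symmetric subdivision from $a$ (or $c$) to a point on the opposite side'' cannot be run: condition $(iii)$ controls $\tau_s(b,\cdot)$ on $[a,c]$ only and gives no information about $\tau_s(a,z)$ for interior $z\in[b,c]$, which is exactly the hypothesis $(iv)$ you would need to subdivide through $a$. Subdividing instead at points of $[a,c]$ produces angle equalities only at those \emph{interior} points, never at $a$, $c$ or $b$; iterating the subdivision does not escape this, because each new vertex at which you want an angle is always the vertex \emph{from which} the distances are measured, and passing from distance rigidity along a geodesic to angle rigidity at its starting point is precisely a first-variation statement. The paper closes this step by applying the first variation formula (\Cref{fvf}) to $l(t)=\tau_s(b,[a,c](t))$ at $t\to 0$ and $t\to 1$; your proposal never invokes it and offers no substitute.

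The same gap propagates into your $(iii)\Rightarrow(v)$. For $\bar z_1\in[\bar b,\bar x_1]$ and $\bar z_2\in[\bar b,\bar x_2]$ with $x_1\neq x_2$, condition $(iii)$ for the sub-triangle $\triangle(b,x_1,x_2)$ only yields distances from the apex $b$ to points of $[x_1,x_2]$, not $\tau_s(z_1,z_2)$ between points on the two $b$-sides; for that you need the angle equality at $b$ in the sub-triangle together with the full cross-distance version of the monotonicity argument (which matches $\tau_s(\beta(s),\gamma(t))$ for all $s,t$, as in the paper's proof of $(i)\Rightarrow(iii)$), and the angle at $b$ again routes through the missing $(iii)\Rightarrow(ii)$. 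Two smaller omissions: you assert that $f$ is causality-preserving but give no argument --- this is where the causal closedness of $U$ is actually used (one approximates chronologically and passes to the limit) --- and the well-definedness and coherence issues you flag as ``obstacles'' are handled in the paper by fixing a single foliation by geodesics from one vertex and using uniqueness of geodesics in $(\leq K)$-comparison neighbourhoods, which your sketch gestures at but does not carry out.
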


\begin{figure}[h]
\centering
\begin{tikzpicture}[scale=3, thick]

\coordinate (a) at (0,0);
\coordinate (b) at (0.4,0.5);
\coordinate (c) at (0,1.4);
\coordinate (x) at (0,1);

\draw (a) -- (b) -- (c) -- cycle;
\draw (b) -- (x);

\node[below]  at (a) {$\ba$};
\node[right]  at (b) {$\bb$};
\node[above]  at (c) {$\bc$};
\node[left]  at (x) {$\bx$};

\coordinate (mAB) at ($ (a)!0.5!(b) $);
\coordinate (mBC) at ($ (b)!0.5!(c) $);
\coordinate (mAC) at ($ (a)!0.5!(c) $);

\node[below right] at ($ (mAB) $) {$\bar\gamma$};
\node[above right]  at ($ (mBC)   $) {$\bar \alpha$};
\node[left]  at ($ (mAC) $) {$\bar\beta$};
\fill (a) circle (0.6pt);
\fill (b) circle (0.6pt);
\fill (c) circle (0.6pt);
\fill (x) circle (0.6pt);

\pic[draw, angle radius=30.pt, angle eccentricity=1.2]
       {angle = b--a--c};
\node at ($(a)+(0.07,0.2)$) {$\theta$};
\end{tikzpicture}
\caption{Labelling of the comparison configuration.}
\label{fig: rigidity notation}
\end{figure}
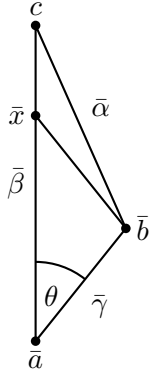

\begin{proof} 
We show $(i) \Rightarrow (iii) \Rightarrow (ii) \Rightarrow (i)$, 
proving they are equivalent. 
Finally, we show that $(iv)\Leftrightarrow (iii)$ and $(i) + (ii) + (iii) + (iv) \Leftrightarrow (v)$.

\fbox{$(i) \Rightarrow (iii)$} Let $\alpha = [b,c]$, $\beta=[a,c]$ and $\gamma=[a,b]$, and let $\bar\alpha=[\bar b,\bar c]$, $\bar\beta=[\bar a,\bar c]$ and $\bar \gamma = [\bar a, \bar b]$ be defined the same way in the model space $\lm{K}$. Moreover, all of these curves are parametrized with constant speed and defined on $[0,1]$.

Let $\theta(s,t):=\widetilde{\ma}_a^K(\beta(s),\gamma(t))$. 
Since $X$ has timelike curvature bounded above, $\theta(s,t)$ is monotonically decreasing, cf.\ \cite[Definition 3.8]{beran-kunzinger-rott2024}. 
If $(i)$ is true, then we have that $\theta(1,1)=\lim_{s,t\to0}\theta(s,t)$, and so $\theta(s,t)$ must be constant where it is defined. 
By the Lorentzian law of cosines, the equality $\theta(s,t)=\theta
(1,1)$ means that as two of the side lengths between $\triangle(a,\beta(s),\gamma(t))$ and $\triangle(\bar a,\bar\beta(s),\bar\gamma(t))$ agree and the comparison angle at $a$ is equal to the angle at $\bar a$ in the triangle in the model space, also the third side length agrees. That is, $\tau_s(\beta(s),\gamma(t))=\tau_s(\bar\beta(s),\bar\gamma(t))$ whenever this is positive. If $\tau_s(\beta(s),\gamma(t))=0$, then this equality is trivially satisfied by standard triangle comparison. Now we specify $t=1$ making $\gamma(1)=b$, yielding (iii). 
Note that instead of putting $t=1$ we can put $s=1$, yielding (iii) with $b$ and $c$ interchanged.

\fbox{$(iii)\Rightarrow(ii)$} 
Assume $(iii)$, that is, for all points $\beta(t)$ with $t \in (0,1)$, its comparison point $\bar \beta(t)$ satisfies $\tau_s(b, \beta(t)) = \bar \tau_s(\bar b, \bar \beta(t))$ for all $t \in (0,1)$. Let $l(t) = \tau_s(b, \beta(t))$ and $\bar l(t) = \bar \tau_s(\bar b, \bar \beta(t))$. Then clearly $l(t) = \bar l(t)$ and applying the first variation formula (\Cref{fvf}) twice for both $t \to 0$ and $t \to 1$, yields $\ma_a (b,c) = \widetilde \ma_a^K (b,c)$ and $\ma_c (a,b) = \widetilde \ma_c^K (a,b)$. It remains to show that $\ma_b (a,c) = \widetilde \ma_b^K (a,c)$. To do this, we simply apply $(i) \Rightarrow (iii)$ to the angle $\ma_a (b,c)$. This shows condition $(iii)$ holds for the side $\alpha$, now applying the argument above yields the final angle.

\fbox{$(ii) \Rightarrow (i)$} There is nothing to show, as $(i)$ is a weaker statement.

\fbox{$(iii) \Rightarrow (iv)$} There is nothing to show, as $(iv)$ is a weaker statement.

\fbox{$(iv) \Rightarrow (iii)$} If $(iv)$ is satisfied at $x\in\beta$, by time symmetry, without loss of generality, we can assume $a\ll x\ll c$ and $x\ll b$, see \Cref{fig: rigidity notation}. 
$(iv)$ implies that the comparison point $\bar x$ for $x$ forms valid comparison triangles $\triangle(\bar a,\bar b,\bar x)$ for $\triangle(a,b,x)$ and $\triangle(\bar b,\bar x,\bar c)$ for $\triangle(b,x,c)$. By angle additivity in the plane, we have 
\[
\widetilde\ma^K_{x}(a,b)=\ma_{\bar x}(\bar a,\bar b)=\ma_{\bar x}(\bar b,\bar c)=\widetilde\ma^K_{x}(b,c)
\] 
and by timelike curvature bounded above, we have 
\[
\ma_{x}(a,b)\geq\widetilde\ma^K_{x}(a,b)=\widetilde\ma^K_{x}(b,c)\geq\ma_{x}(b,c) \, .
\]
By \Cref{pop:triangle inequality angles} \ref{FuFuPa/PaPaFu}, we have that 
\[
\ma_x(a,b)\leq\ma_x(a,c)+\ma_x(c,b)=\ma_x(c,b) \, ,
\]
thus they are equal, so $(i)$ holds for $\triangle(a,b,x)$ and $\triangle(b,x,c)$ at $x$. We use that $(i) \Rightarrow (iii)$ on these two triangles and get $(iii)$ for the original triangle. Namely, let $y\in\beta$, then it is either in $[a,x]$ or in $[x,c]$, say in $[a,x]$. As $\triangle(\bar a,\bar b,\bar x)$ is the comparison triangle for $\triangle(a,b,x)$ and a comparison point for $y$ is also a comparison point for $y$ in $\triangle(\bar a,\bar b,\bar c)$, we have that $\tau(b,x)=\tau(\bar b,\bar x)$ and $\tau(x,b)=\tau(\bar x,\bar b)$.

\fbox{$(v)\Rightarrow(i)$---$(iv)$} This is clear by the definition of an isometry.

\fbox{$(i)$---$(iv)\Rightarrow(v)$} Assume that $a\ll b\ll c$. Note that all points in $\conv(\triangle(\bar a,\bar b,\bar c))$ are of the form $[\bar a,[\bar b,\bar c](s)](t)$ for some $s,t\in[0,1]$, and $(s,t)$ are unique unless $t=0$. Define a map 
\begin{align*}
f\colon \conv(\triangle(\bar a,\bar b,\bar c)) &\longrightarrow X \, ,\\
[\bar a,[\bar b,\bar c](s)](t) & \longmapsto [a,[ b, c](s)](t) \, .
\end{align*}
This is well-defined since for $t=0$, this always yields $a$, and because geodesics in comparison neighbourhoods are unique by \cite[Theorem 4.7]{BNR25}.
We claim that, if $x=[b,c](s_1)\ll y=[b,c](s_2)$, then $(i)$ also holds at $a$ in $\triangle(a,x,y)$.
We know $\bar x=[\bar b,\bar c](s_1),\bar y=[\bar b,\bar c](s_2)$.
By \Cref{pop:triangle inequality angles} \ref{FuFuFu/PaPaPa}, we have that
\begin{align*}
\widetilde\ma^K_a(b,c)&=\widetilde\ma^K_a(b,x)+\widetilde\ma^K_a(x,y)+\widetilde\ma^K_a(y,c)\\
&\geq \ma_a(b,x)+\ma_a(x,y)+\ma_a(y,c)\geq \ma_a(b,c)=\widetilde\ma^K_a(b,c) \, .
\end{align*}
Therefore, all of these inequalities are equalities. 
Inspecting this more closely, we get $\widetilde\ma^K_a(x,y)=\ma_a(x,y)$, i.e.\ $(i)$ holds for $\triangle(a,x,y)$, and the subtriangle $\triangle(\bar a,\bar x,\bar y)$ is its comparison triangle.
Now for two points $w=[a,x](t_1),z=[a,y](t_2)$ we know that the comparison points are $\bar w=[\bar a,\bar x](t_1),\bar z=[\bar a,\bar y](t_2)$ and $f(\bar w)=w$ and $f(\bar z)=z$. 
By an analogous argument to that of $(i) \Rightarrow (iii)$, one can show that $\tau(w,z)=\tau(\bar w,\bar z)$. Varying $s_1,t_1$ and $s_2,t_2$ gives all comparison points, and it follows that $f$ is $\tau$-preserving.

To prove that $f$ is $\leq$-preserving, observe that the implication $f(z)\leq f(w)\Rightarrow z\leq w$ follows easily by approximating $z$ or $w$ in a timelike way: Say we find $z_n \to z$ with $z_n \ll z$, then $f(z_n)\ll f(z)\leq f(w)$, thus $z_n\ll w$. In the limit we still have $z\leq w$. 
The implication $z\leq w\Rightarrow f(z)\leq f(w)$ follows similarly using that $U$ is causally closed.
\end{proof}

\begin{pop}[Quadrangle with signed angle sum zero]\label{pop:quadrangle rigidity}
Let $X$ be a globally causally closed \LpLS with timelike curvature globally bounded above by $K=0$. Consider four points $p_1\ll p_2\ll p_4\ll p_3$ as a quadrangle (with timelike diagonals) $Q=(p_1,p_2,p_3,p_4)$. If the angles of this quadrangle satisfy
\begin{equation}\label{eq:quadranglerigidity}
\ma_{p_1}(p_2,p_4) + \ma_{p_3}(p_2,p_4) \geq \ma_{p_2}(p_1,p_3) + \ma_{p_4}(p_1,p_3) \,,
\end{equation}
then this is an equality and $Q$ bounds a convex flat region. More precisely, there is a filled-in quadrangle $\bar Q=\conv(\bar p_1,\bar p_2,\bar p_3,\bar p_4)\subseteq\R^{1,1}$ in Minkowski space and an isometry $f:\bar Q\to X$ mapping $\bar p_i$ to $p_i$ for all $i$.
\end{pop}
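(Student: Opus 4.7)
The plan is to split $Q$ along the diagonal $[p_1,p_3]$ into two timelike sub-triangles, use the hypothesis together with the Minkowski angle-sum formula (\Cref{lem:angleSumMink}) and the upper curvature bound to force all intermediate inequalities to be equalities, then invoke \Cref{pop:triangleCBequality} to produce flat fill-ins of the sub-triangles, and finally glue them to build $\bar Q$.

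\emph{Step 1: forcing equality.} Applying \Cref{lem:angleSumMink} to the comparison triangles of $\triangle(p_1,p_2,p_3)$ and $\triangle(p_1,p_3,p_4)$ gives, in each sub-triangle, that the mixed-orientation comparison angle at the middle vertex equals the sum of the two same-orientation comparison angles at $p_1$ and $p_3$. The signed-angle bound \eqref{eq:angle condition} from the $(\leq 0)$ condition translates to $\ma\leq\widetilde\ma$ at the same-orientation vertices $p_1,p_3$ and $\ma\geq\widetilde\ma$ at the mixed-orientation vertices $p_2,p_4$. Combining these with the triangle inequality for angles at $p_1$ and $p_3$ (\Cref{pop:triangle inequality angles}\ref{FuFuFu/PaPaPa}) yields
\[
\ma_{p_1}(p_2,p_4)+\ma_{p_3}(p_2,p_4)\leq\ma_{p_2}(p_1,p_3)+\ma_{p_4}(p_1,p_3),
\]
and the hypothesis \eqref{eq:quadranglerigidity} then forces every intermediate inequality to be an equality. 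In particular, in each sub-triangle every angle coincides with its $\lm{0}$-comparison value, and the triangle inequality for angles at $p_1,p_3$ holds with equality, giving the additivities $\ma_{p_1}(p_2,p_4)=\widetilde\ma_{p_1}(p_2,p_3)+\widetilde\ma_{p_1}(p_3,p_4)$ and analogously at $p_3$.

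\emph{Step 2: flat fill-ins and gluing along $[\bar p_1,\bar p_3]$.} By \Cref{pop:triangleCBequality} $(ii)\Rightarrow(v)$, both sub-triangles admit isometric flat fill-ins $f_j\colon\conv(\bar\triangle_j)\to X$ for $j=1,2$. Place $\bar\triangle_1,\bar\triangle_2\subseteq\R^{1,1}$ sharing the side $[\bar p_1,\bar p_3]$ on opposite sides of the line through it, forming a Minkowski quadrangle $\bar Q$. The piecewise map $f\colon\bar Q\to X$ with $f|_{\bar\triangle_j}:=f_j$ is well-defined on the common diagonal by uniqueness of timelike geodesics in $(\leq 0)$-curvature, and the Step~1 additivities imply its vertex angles at $\bar p_1,\bar p_3$ match those of $Q$ at $p_1,p_3$. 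Let $\bar q_0:=[\bar p_1,\bar p_3]\cap[\bar p_2,\bar p_4]$ and $q_0:=f(\bar q_0)\in[p_1,p_3]$. The broken causal curve $p_2\to q_0\to p_4$, whose legs lie in the flat images of $f_1,f_2$, has $\tau$-length $\bar\tau(\bar p_2,\bar q_0)+\bar\tau(\bar q_0,\bar p_4)=\bar\tau(\bar p_2,\bar p_4)$, so the reverse triangle inequality forces $\tau_s(p_2,p_4)\geq\bar\tau(\bar p_2,\bar p_4)$. Conversely, applying the $(\leq 0)$ bound and the Lorentzian law of cosines to $\triangle(p_1,p_2,p_4)$, and using that $\ma_{p_1}(p_2,p_4)$ equals the Minkowski angle at $\bar p_1$ in $\bar Q$, yields the reverse inequality. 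Hence $\tau_s(p_2,p_4)=\bar\tau(\bar p_2,\bar p_4)$ and $\widetilde\ma_{p_1}(p_2,p_4)=\ma_{p_1}(p_2,p_4)$, so a further application of \Cref{pop:triangleCBequality} produces flat fill-ins $g_3,g_4$ of $\triangle(p_1,p_2,p_4)$ and $\triangle(p_2,p_3,p_4)$ as well.

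\emph{Step 3: global isometry.} The four fill-ins $f_1,f_2,g_3,g_4$ collectively cover $\bar Q$ via its two diagonal splittings. On each of the four pairwise overlap regions cut out by the two diagonals, the two relevant fill-ins agree: both are isometric embeddings sharing two common vertices and sending $\bar q_0$ to the common point $q_0$, and the geodesic-based parameterisation used in the proof of \Cref{pop:triangleCBequality}$(v)$ forces them to coincide on the overlap. Pairs $\bar x,\bar y\in\bar Q$ lying in a common flat sub-triangle are handled directly by that fill-in; opposite-corner pairs (not contained in any single sub-triangle) are resolved by a two-step broken path through successive diagonal intersection points, using the flatness established on both sides. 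The delicate part of the argument is precisely this Step 3: the consistency of the four fill-ins on overlaps and the treatment of opposite-corner pairs. Both rest on the rigidity of isometric embeddings into $\lm{0}$ combined with the identification inside $X$ of a common intersection point of the two diagonals of $Q$, which itself follows from the broken-path argument of Step 2 together with the uniqueness of timelike maximisers in spaces with global timelike curvature bounded above by $0$.
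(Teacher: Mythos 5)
Your overall strategy is the same as the paper's: decompose $Q$ along both diagonals, force every intermediate angle inequality to be an equality, apply the triangle rigidity \Cref{pop:triangleCBequality} to all four sub-triangles, and glue the resulting flat fill-ins. Starting with the timelike diagonal $[p_1,p_3]$ rather than $[p_2,p_4]$ is a legitimate variant and actually buys you something you do not exploit: since $\bar p_2$ and $\bar p_4$ both lie in the convex diamond $J^+(\bar p_1)\cap J^-(\bar p_3)\subseteq\R^{1,1}$ and sit on opposite sides of the timelike line through $\bar p_1,\bar p_3$, the segment $[\bar p_2,\bar p_4]$ automatically meets $[\bar p_1,\bar p_3]$, so the convexity of $\bar Q$ --- which the paper must establish by a separate contradiction argument for its spacelike-diagonal gluing --- comes for free. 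You should say this rather than silently assuming $\bar q_0$ exists. You also need to reorder Step 2: writing $\bar\tau(\bar p_2,\bar q_0)+\bar\tau(\bar q_0,\bar p_4)=\bar\tau(\bar p_2,\bar p_4)$ and calling $p_2\to q_0\to p_4$ a causal curve presupposes $\bar p_2\ll\bar q_0\ll\bar p_4$, which (since $\bar q_0-\bar p_2$ is a positive multiple of $\bar p_4-\bar p_2$) is equivalent to $\bar p_2\ll\bar p_4$; this is only known after the hinge-comparison upper bound $\tau(p_2,p_4)\le\bar\tau(\bar p_2,\bar p_4)$ has been combined with $\tau(p_2,p_4)>0$. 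Run the upper bound first, then the broken-path lower bound.

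The genuine gap is Step 3. To conclude that $f$ is an isometry in the sense of \Cref{def:isometry} you must verify $\tau(\bar x,\bar y)=\tau(f(\bar x),f(\bar y))$ and $\bar x\le\bar y\Leftrightarrow f(\bar x)\le f(\bar y)$ for \emph{arbitrary} pairs, and two of the required arguments are missing. First, for $\bar x\le\bar y$ in opposite sub-triangles, the fact that the two overlapping halves of $[\bar x,\bar y]$ (on either side of the crossed half-diagonals) are mapped isometrically only shows that $f\circ[\bar x,\bar y]$ is a \emph{local} geodesic; passing from this to $\tau(f(\bar x),f(\bar y))=\tau(\bar x,\bar y)$ needs a local-to-global statement for geodesics under the global non-positive curvature bound (the paper invokes \cite[Remark 2.6]{erös-gieger2025}), and the segments passing exactly through $\bar q_0$ must be treated separately by approximation and continuity of $\tau$ together with closedness of $\le$. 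Second, the backward implication $f(\bar x)\le f(\bar y)\Rightarrow\bar x\le\bar y$ does not follow from anything you wrote; the paper proves it by flowing $\bar y$ forward along a timelike curve to a point null-related to $\bar x$ and deriving a contradiction with the already-established forward direction. Without these two items your map is only shown to be $\tau$- and causality-preserving on pairs lying in a common flat sub-triangle, which is strictly weaker than the statement being proved.
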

\begin{proof} 
Recall that since we are working with $K=0$, we drop the $K$ from the comparison angles.
Take the triangles $\triangle_1:=\triangle(p_1,p_2,p_4),\triangle_3:=\triangle(p_2,p_3,p_4)$, and
consider a comparison configuration $\bar Q$ consisting of comparison triangles $\bar\triangle_1,\bar\triangle_3$ of $\triangle_1,\triangle_3$ with a shared side $[\bar p_2,\bar p_4]$, with $\bar p_1,\bar p_3$ on opposite sides of the line extending the side $[\bar p_2,\bar p_4]$, see \Cref{fig: quadrangle 1}. 
\begin{figure}
\begin{center}
\begin{tikzpicture}[scale=0.5, line cap=round, line join=round, >=triangle 45]

\begin{scope}
  \clip(-3,-1) rectangle (4,10);

  \fill[orange!50, opacity=0.7] (0,0)    .. controls (-1.5,2) ..     (-2,4)  .. controls (0.8,5.75) ..  (3,7.5)  .. controls (2,3.75) ..  cycle; 
  \fill[blue!50, opacity=0.7] (-2,4)  .. controls (-1,6.5) .. (1,9) .. controls (2.2,8.25) .. (3,7.5) .. controls (0.8,5.75) .. cycle; 
  
  \draw (0,0)  .. controls (-1.5,2) ..  (-2,4);
  \draw (-2,4) .. controls (-1,6.5) .. (1,9);
  \draw (1,9) .. controls (2.2,8.25) .. (3,7.5);
  \draw (-2,4) .. controls (0.8,5.75) .. (3,7.5);
  \draw (0,0) .. controls (2,3.75) .. (3,7.5);
  \draw (1,9) .. controls (0.7,4.5) .. (0,0);

  \fill (0,0) circle (3pt);   \node[below left]  at (0,0)   {$p_1$};
  \fill (-2,4) circle (3pt);  \node[left]        at (-2,4)  {$p_2$};
  \fill (1,9) circle (3pt);   \node[above]       at (1,9)   {$p_3$};
  \fill (3,7.5) circle (3pt); \node[right]       at (3,7.5) {$p_4$};

  \node[text=orange!80!black] at (-0.7,3.5) {$\triangle_1$};
  \node[text=blue!80!black] at (1.6,7.3)  {$\triangle_3$};
\end{scope}

\begin{scope}[shift={(10,0)}]
  \clip(-3,-1) rectangle (4,10);

  \fill[orange!50, opacity=0.7] (0,0) -- (-2,4) -- (3,7.5) -- cycle; 
  \fill[blue!50, opacity=0.7] (-2,4) -- (1,9) -- (3,7.5) -- cycle; 

  \draw (0,0)-- (-2,4);
  \draw (-2,4)-- (1,9);
  \draw (1,9)-- (3,7.5);
  \draw (-2,4)-- (3,7.5);
  \draw (0,0)-- (3,7.5);
  \draw (1,9)-- (0,0);

  \fill (0,0) circle (3pt);   \node[below left]  at (0,0)   {$\bar{p}_1$};
  \fill (-2,4) circle (3pt);  \node[left]        at (-2,4)  {$\bar{p}_2$};
  \fill (1,9) circle (3pt);   \node[above]       at (1,9)   {$\bar{p}_3$};
  \fill (3,7.5) circle (3pt); \node[right]       at (3,7.5) {$\bar{p}_4$};

  \node[text=orange!80!black] at (-0.7,3.5) {$\bar\triangle_1$};
  \node[text=blue!80!black] at (1.6,7.3)  {$\bar\triangle_3$};
\end{scope}

\end{tikzpicture}
\end{center}
\caption{Setup of the quadrangles, first case.}
    \label{fig: quadrangle 1}
\end{figure}
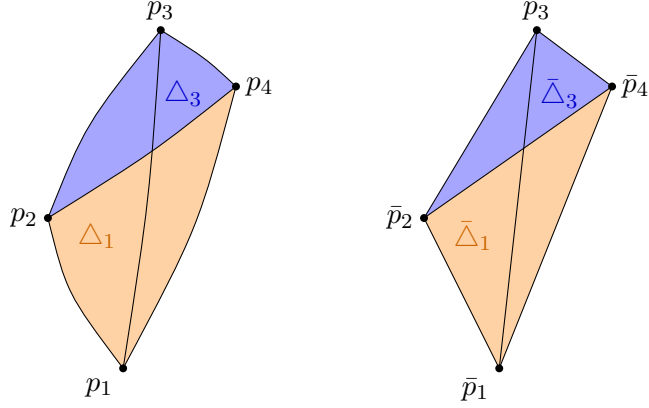

Since we are comparing with the Minkowski plane, by \eqref{eq:angle condition} we have that the angles in the triangles $\triangle_1$ and $\triangle_3$ satisfy the following inequalities: 
\begin{align}\label{eq:quadranglerigidity:angleComparison}
\begin{split}
\ma_{p_1}(p_2,p_4)\leq\widetilde\ma_{p_1}(p_2,p_4)\, ,\qquad\ma_{p_2}(p_3,p_4)\leq\widetilde\ma_{p_2}(p_3,p_4)\, ,\\
\ma_{p_2}(p_1,p_4)\geq\widetilde\ma_{p_2}(p_1,p_4)\, ,\qquad\ma_{p_3}(p_2,p_4)\leq\widetilde\ma_{p_3}(p_2,p_4)\, ,\\
\ma_{p_4}(p_1,p_2)\leq\widetilde\ma_{p_4}(p_1,p_2)\, ,\qquad\ma_{p_4}(p_2,p_3)\geq\widetilde\ma_{p_4}(p_2,p_3)\, .
\end{split}
\end{align}
By \Cref{lem:angleSumMink}, we have that 
\begin{align*}
\widetilde\ma_{p_1}(p_2,p_4)+\widetilde\ma_{p_4}(p_1,p_2)=\widetilde\ma_{p_2}(p_1,p_4) \, ,\\
\widetilde\ma_{p_2}(p_3,p_4)+\widetilde\ma_{p_3}(p_2,p_4)=\widetilde\ma_{p_4}(p_2,p_3) \, ,
\end{align*}
and in particular
\begin{align}\label{eq:quadranglerigidity:maOverTriangle}
\begin{split}
\ma_{p_1}(p_2,p_4)+\ma_{p_4}(p_1,p_2)\leq\ma_{p_2}(p_1,p_4)\, ,\\
\ma_{p_2}(p_3,p_4)+\ma_{p_3}(p_2,p_4)\leq\ma_{p_4}(p_2,p_3)\, .
\end{split}
\end{align}
By \Cref{pop:triangle inequality angles} \ref{FuFuPa/PaPaFu} at $p_2$ and $p_4$,
we have that:
\begin{align}\label{eq:quadranglerigidity:triIneqAng}
\begin{split}
\ma_{p_2}(p_1,p_4)\leq\ma_{p_2}(p_3,p_4)+\ma_{p_2}(p_1,p_3)\, ,\\
\ma_{p_4}(p_2,p_3)\leq\ma_{p_4}(p_2,p_1)+\ma_{p_4}(p_1,p_3) \, .
\end{split}
\end{align}
Notice that all but the last term appears in the sum of \eqref{eq:quadranglerigidity:maOverTriangle} and thus we can conclude that
\begin{align*}
\ma_{p_1}(p_2,p_4)\!+\!\ma_{p_3}(p_2,p_4)&\leq \ma_{p_2}(p_1,p_4)\!-\!\ma_{p_2}(p_3,p_4)\!+\!\ma_{p_4}(p_2,p_3)\!-\!\ma_{p_4}(p_1,p_2)\\
&\leq \ma_{p_2}(p_1,p_3)\!+\!\ma_{p_4}(p_1,p_3) \, ,
\end{align*}
which is the converse inequality of the assumption. Thus, all our inequalities are, in fact, equalities. In particular, all the angles in $\triangle_1$ and $\triangle_3$ are equal to the respective comparison angles.

We claim that $\bar{Q}$ is convex. Indeed, this boils down to $\bar{p}_2$ and $\bar{p}_4$ being on opposite sides of the segment $[\bar{p}_1, \bar{p}_3]$. 
By \eqref{eq:quadranglerigidity} and the angle comparison condition given by the curvature assumption, we have
\begin{equation}\label{eq:quadranglerigidity:comparisonangles}
\widetilde{\ma}_{p_1}(p_2,p_4) + \widetilde{\ma}_{p_3}(p_2,p_4) \geq \widetilde{\ma}_{p_2}(p_1,p_3) + \widetilde{\ma}_{p_4}(p_1,p_3) \, .
\end{equation}
However, if $\bar{p}_2$ and $\bar{p}_4$ are on the same side of the line extending $\bar{p}_1\bar{p}_3$, then, by our initial assumption of $\bp_1$ and $\bp_3$ being on opposite sides of the line extending the segment $[\bp_2,\bp_4]$, we get that either $\bar p_2$ is in the interior of $\triangle(\bar{p}_1,\bar{p}_3,\bar{p}_4)$ or $\bar{p}_4$ is in the interior of $\triangle(\bar{p}_1,\bar{p}_2,\bar{p}_3)$. Without loss of generality, we assume the latter.

Under this hypothesis, it is clear that 
\[
\widetilde{\ma}_{p_1}(p_2,p_4) \leq \widetilde{\ma}_{p_1}(p_2,p_3) \quad \text{and} \quad \widetilde{\ma}_{p_3}(p_2,p_4) \leq \widetilde{\ma}_{p_3}(p_2,p_1) 
\]
which implies
\[
\widetilde{\ma}_{p_1}(p_2,p_4) + \widetilde{\ma}_{p_3}(p_2,p_4)  \leq \widetilde{\ma}_{p_1}(p_2,p_3) + \widetilde{\ma}_{p_3}(p_2,p_1)  = \widetilde{\ma}_{p_2}(p_1,p_3)\, 
\]
This, combined with \eqref{eq:quadranglerigidity:comparisonangles} yields 
$\widetilde{\ma}_{p_4}(p_1,p_3) = 0$, which contradicts that $\bar{p}_4$ is in the interior of $\triangle(\bar{p}_1,\bar{p}_2,\bar{p}_3)$.

Let $\bar q$ be the intersection of the diagonals $[\bar p_1,\bar p_3]\cap[\bar p_2,\bar p_4]$ and $q=[p_2,p_4](\tau(\bar p_2,\bar q))$ (i.e.\ $\bar q$ is its comparison point, both for $\triangle_1$ and $\triangle_3$). It exists by convexity of $\bar Q$.
Now we have the following chain of equalities:
\begin{align*}
\ma_{p_2}(p_1,p_3)&=\ma_{p_2}(p_1,p_4)-\ma_{p_2}(p_3,p_4)\\
&=\widetilde\ma_{p_2}(p_1,p_4)-\widetilde\ma_{p_2}(p_3,p_4)\\
&=\ma_{\bar p_2}(\bar p_1,\bar p_3)
\end{align*}
which hold by equality in \eqref{eq:quadranglerigidity:triIneqAng}, equality in \eqref{eq:quadranglerigidity:angleComparison} and angle additivity in the plane. Thus, $\bar p_1,\bar p_2,\bar p_3$ forms a valid comparison hinge for $[p_2,p_1],[p_2,p_3]$. By hinge comparison and reverse triangle inequality, this yields 
\[
\tau(p_1,p_3)\leq\tau(\bar p_1,\bar p_3)=\tau(\bar p_1,\bar q)+\tau(\bar q,\bar p_3)\leq\tau(p_1,p_3)\,.
\]
Therefore, $\bar\triangle_2=\triangle(\bar p_1,\bar p_2,\bar p_3)$ and $\bar\triangle_4=\triangle(\bar p_1,\bar p_3,\bar p_4)$ are comparison triangles for $\triangle_2:=\triangle(p_1,p_2,p_3)$, $\triangle_4:=\triangle(p_1,p_3,p_4)$, see \Cref{fig: quadrangle 2}. 

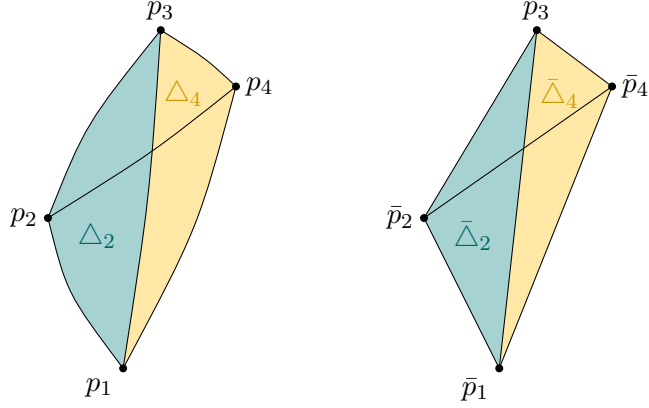
\begin{figure}
\begin{center}
\begin{tikzpicture}[scale=0.5, line cap=round, line join=round, >=triangle 45]

\begin{scope}
  \clip(-3,-1) rectangle (4,10);

  \fill[teal!50, opacity=0.7] (0,0) .. controls (-1.5,2) .. (-2,4) .. controls (-1,6.5) .. (1,9) .. controls (0.7,4.5) .. cycle; 
  \fill[Amber!50, opacity=0.7] (0,0) .. controls (0.7,4.5) .. (1,9) .. controls (2.2,8.25) .. (3,7.5) .. controls (2,3.75) .. cycle;

  \draw (0,0)  .. controls (-1.5,2) ..  (-2,4);
  \draw (-2,4) .. controls (-1,6.5) .. (1,9);
  \draw (1,9) .. controls (2.2,8.25) .. (3,7.5);
  \draw (-2,4) .. controls (0.8,5.75) .. (3,7.5);
  \draw (0,0) .. controls (2,3.75) .. (3,7.5);
  \draw (1,9) .. controls (0.7,4.5) .. (0,0);

  \fill (0,0) circle (3pt);   \node[below left]  at (0,0)   {$p_1$};
  \fill (-2,4) circle (3pt);  \node[left]        at (-2,4)  {$p_2$};
  \fill (1,9) circle (3pt);   \node[above]       at (1,9)   {$p_3$};
  \fill (3,7.5) circle (3pt); \node[right]       at (3,7.5) {$p_4$};

  \node[text=teal!80!black] at (-0.7,3.5) {$\triangle_2$};
  \node[text=Amber!80!black] at (1.6,7.3)  {$\triangle_4$};
\end{scope}

\begin{scope}[shift={(10,0)}] 
  \clip(-3,-1) rectangle (4,10);

  \fill[teal!50, opacity=0.7] (0,0) -- (-2,4) -- (1,9) -- cycle;
  \fill[Amber!50, opacity=0.7] (0,0) -- (1,9) -- (3,7.5) -- cycle; 
  
  \draw (0,0)-- (-2,4);
  \draw (-2,4)-- (1,9);
  \draw (1,9)-- (3,7.5);
  \draw (-2,4)-- (3,7.5);
  \draw (0,0)-- (3,7.5);
  \draw (1,9)-- (0,0);

  \fill (0,0) circle (3pt);   \node[below left]  at (0,0)   {$\bar{p}_1$};
  \fill (-2,4) circle (3pt);  \node[left]        at (-2,4)  {$\bar{p}_2$};
  \fill (1,9) circle (3pt);   \node[above]       at (1,9)   {$\bar{p}_3$};
  \fill (3,7.5) circle (3pt); \node[right]       at (3,7.5) {$\bar{p}_4$};

  \node[text=teal!80!black] at (-0.7,3.5) {$\bar\triangle_2$};
  \node[text=Amber!80!black] at (1.6,7.3)  {$\bar\triangle_4$};
\end{scope}
\end{tikzpicture}
\end{center}
    \caption{Setup of the quandrangles, second case.}
    \label{fig: quadrangle 2}
\end{figure}

An argument analogous to the above can be formulated for the triangles $\triangle_2,\triangle_4$: 
Since we are comparing to a $K=0$ comparison space, by \eqref{eq:angle condition} we have that  
\begin{align}\label{eq:quadranglerigidity:Delta24Chain}
\begin{split}
&\widetilde\ma_{p_2}(p_1,p_3)+\widetilde\ma_{p_4}(p_1,p_3)\\
&\leq\ma_{p_2}(p_1,p_3)+\ma_{p_4}(p_1,p_3)\\
&\leq\ma_{p_1}(p_2,p_4)+\ma_{p_3}(p_2,p_4)\\
&\leq \ma_{p_1}(p_2,p_3)+\ma_{p_1}(p_3,p_4) + \ma_{p_3}(p_2,p_1)+\ma_{p_3}(p_1,p_4)\\
&\leq \widetilde\ma_{p_1}(p_2,p_3)+\widetilde\ma_{p_1}(p_3,p_4) + \widetilde\ma_{p_3}(p_2,p_1)+\widetilde\ma_{p_3}(p_1,p_4) \, ,
\end{split}
\end{align}
where we have subsequently applied the assumption; \Cref{pop:triangle inequality angles} \ref{FuFuFu/PaPaPa}; and \eqref{eq:angle condition} again.
Additionally, by \Cref{lem:angleSumMink} we know that
\begin{align*}
\widetilde\ma_{p_2}(p_1,p_3)=\widetilde\ma_{p_1}(p_2,p_3)+\widetilde\ma_{p_3}(p_1,p_2)\, ,\\
\widetilde\ma_{p_4}(p_1,p_3)=\widetilde\ma_{p_1}(p_3,p_4)+\widetilde\ma_{p_3}(p_1,p_4) \, .
\end{align*}
Thus, the start and end of \eqref{eq:quadranglerigidity:Delta24Chain} are equal, and in particular, all angles in $\triangle_2$ and $\triangle_4$ are equal to the respective comparison angles.

Thus, we know that \Cref{pop:triangleCBequality}(i) holds for all $\triangle_i$ and thus also \Cref{pop:triangleCBequality}(v). 
That is, we get isometries $f_i$ from the filled-in triangles $\bar\triangle_i$ into $\triangle_i$ providing a fill-in of $\triangle_i$. As $\bar\triangle_1,\bar\triangle_3$ only overlap on the side $[\bar p_2,\bar p_4]$ where $f_1$ and $f_3$ agree, these isometries fit together to a continuous map $f_{13}$ from the filled-in quadrangle $\bar Q$ to $X$. The same works with $f_2$ and $f_4$, giving a continuous map $f_{24}$.

Both of these maps map the sides of the quadrangle $[\bar p_i,\bar p_{i+1}]$ to $[p_i,p_{i+1}]$ (taking indices modulo 4),  
$f_{13}$ maps the diagonal $[\bar p_2,\bar p_4]$ to $[p_2,p_4]$, and $f_{24}$ maps the diagonal $[\bar p_1,\bar p_3]$ to $[p_1,p_3]$, as each of these lie in an isometry region of the map.

For the other diagonals, we already know
\[
\tau(p_1,p_3)=\tau(\bar b_1,\bar q)+\tau(\bar q,\bar p_3)=L(f_{13}([\bar p_1,\bar p_3]))\,,
\]
making $f_{13}([\bar p_1,\bar p_3])$ equal to $[p_1,p_3]$. 
For $f_{24}([\bar p_2,\bar p_4])$, one can proceed analogously: set $q'=f_{24}(\bar q)$, prove that $\tau(p_2,p_4)=\tau(p_2,q')+\tau(q',p_4)$ with hinge comparison, making $f_{24}([\bar p_2,\bar p_4])=[p_2,p_4]$. 
Now note that both $q,q'$ are the intersection of $[p_1,p_3]$ and $[p_2,p_4]$ and are thus the same (if the intersection is not just a point, note $\tau(p_1,q)=\tau(p_1,q')$).

Now we define the four subtriangles $\bar\triangle_{12}=\triangle(\bar p_1,\bar p_2,\bar q)$, $\bar\triangle_{23}=\triangle(\bar p_2,\bar p_3,\bar q)$, $\bar\triangle_{34}=\triangle(\bar p_3,\bar p_4,\bar q)$, $\bar\triangle_{41}=\triangle(\bar p_4,\bar p_1,\bar q)$ (where the filled-in triangle $\bar\triangle_{ij}$ is the intersection of the filled-in triangles $\bar\triangle_i$ and $\bar\triangle_j$), see \Cref{fig: planar quadrangle four subtriangles}. We claim that $f_{13}=f_{24}$.  
Indeed, we already know that all $[\bar p_i,\bar q]$ are mapped isometrically to $[p_i,q]$, i.e.\ $f_{13},f_{24}$ agree on the diagonals. Note now that any point $\bar x$ in the filled-in quadrangle $\bar Q$ is in one of the filled-in triangles $\bar\triangle_{ij}$. Let $\bar a\ll \bar b\ll \bar c$ be the vertices of $\bar\triangle_{ij}$ in time order, then $\bar x$ can be described as $\bar x=[\bar a,[\bar b,\bar c](s)](t)$ for some $s,t$. As both $f_{13}$ and $f_{24}$ are isometries, both $f_{13}([\bar a,[\bar b,\bar c](s)])$ and $f_{24}([\bar a,[\bar b,\bar c](s)])$ are geodesics connecting $f_{13}(\bar a)=f_{24}(\bar a)$ to $f_{13}([\bar b,\bar c](s))=f_{24}([\bar b,\bar c](s))$ and are thus the same by uniqueness of geodesics in comparison neighbourhoods. In particular, $f_{13}(\bar x)=f_{24}(\bar x)$.

Finally, we claim that $f:=f_{13}=f_{24}$ is an isometry. In order to prove this, let 
$\bar x,\bar y$ be in the filled-in quadrilateral $\bar Q$. They are in some subtriangles $\bar\triangle_{ij}$ and $\bar\triangle_{kl}$. We consider four cases.

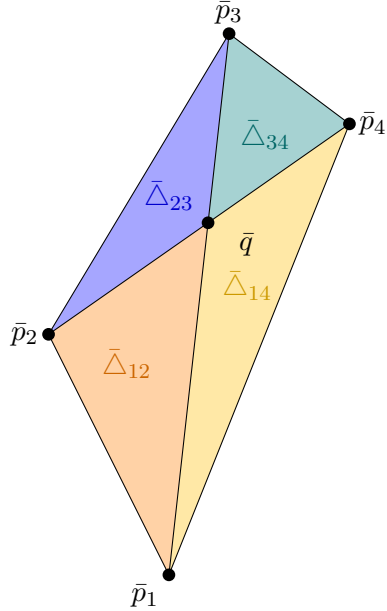
\begin{figure}
\begin{center}
\begin{tikzpicture}[scale=0.8, line cap=round, line join=round, >=triangle 45]
\begin{scope}
  \clip(-3,-1) rectangle (4,10);

  \fill[orange!50, opacity=0.7] (0,0) -- (-2,4) -- (0.6506,5.8554) -- cycle; 
  \fill[blue!50, opacity=0.7] (-2,4) -- (1,9) -- (0.6506,5.8554) -- cycle; 
  \fill[teal!50, opacity=0.7] (1,9) -- (3,7.5) -- (0.6506,5.8554) -- cycle; 
  \fill[Amber!50, opacity=0.7] (0,0) -- (3,7.5) -- (0.6506,5.8554) -- cycle; 

  \draw (0,0)-- (-2,4);
  \draw (-2,4)-- (1,9);
  \draw (1,9)-- (3,7.5);
  \draw (-2,4)-- (3,7.5);
  \draw (0,0)-- (3,7.5);
  \draw (1,9)-- (0,0);

  \fill (0,0) circle (3pt);   \node[below left]     at (0,0)   {$\bar p_1$};
  \fill (-2,4) circle (3pt);  \node[left]           at (-2,4)  {$\bar p_2$};
  \fill (1,9) circle (3pt);   \node[above]          at (1,9)   {$\bar p_3$};
  \fill (3,7.5) circle (3pt); \node[right]          at (3,7.5) {$\bar p_4$};
  \fill (0.6506,5.8554) circle (3pt); \node[right]  at (1,5.5) {$\bar q$};

  \node[text=orange!80!black] at (-0.7,3.5) {$\bar\triangle_{12}$};
  \node[text=blue!80!black] at (-0,6.3)  {$\bar\triangle_{23}$};
  \node[text=teal!80!black] at (1.6,7.3) {$\bar\triangle_{34}$};
  \node[text=Amber!80!black] at (1.3,4.8)  {$\bar\triangle_{14}$};

\end{scope}
\end{tikzpicture}
\end{center}
\caption{The four subtriangles in the Minkowski plane.}
\label{fig: planar quadrangle four subtriangles}
\end{figure}

\underline{Case 1:} If these subtriangles are adjacent or equal (i.e.\ $j=k$ or $l=i$ etc), they both lie in one of $\bar\triangle_m$ and we already know $\tau(\bar x,\bar y)=\tau(f(\bar x),f(\bar y))$ and $\bar x\leq\bar y\Leftrightarrow f(\bar x)\leq f(\bar y)$ as either $f_{13}$ or $f_{24}$ restrict to an isometry on $\bar\triangle_m$. 

\underline{Case 2:} Now, assume that $\bar x\leq\bar y$ and $[\bar x,\bar y]$ does not contain $\bar q$. Then $[\bar x,\bar y]$ changes subtriangles twice, switching into an adjacent subtriangle of the previous one, say in the distinct points $\bar z \leq \bar w$. By applying case 1 to $\bar x\leq \bar w$ and $\bar z\leq\bar y$, we get that $f$ maps both overlapping segments $[\bar x,\bar w]$ and $[\bar z,\bar y]$ isometrically, in particular, $f$ preserves the length and $f([\bar x,\bar y])$ is a local geodesic. By \cite[Remark 2.6]{erös-gieger2025}, $[f(\bar x),f(\bar y)]$ is in fact a (global) geodesic, therefore $\tau(\bar x,\bar y)=\tau(f(\bar x),f(\bar y))$ and $f(\bar x)\leq f(\bar y)$, as required.

\underline{Case 3:} 
Suppose we are in the case where $\bar x\leq\bar y$ but $[\bar x,\bar y]$ does contain $\bar q$. Then we find $\bar x_n\to\bar x$ such that $\bar x_n\leq \bar y$ and $[\bar x_n,\bar y]$ doesn't contain $\bar q$. Thus, $\tau(\bar x_n,\bar y)=\tau(f(\bar x_n),f(\bar y))$ and $f(\bar x_n)\leq f(\bar y)$ by the previous cases. By continuity of $\tau$ and closedness of $\leq$, we also get $\tau(\bar x,\bar y)=\tau(f(\bar x),f(\bar y))$ and $f(\bar x)\leq f(\bar y)$, as required.

\underline{Case 4:} Finally, we have to exclude the case $f(\bar x)\leq f(\bar y)$ but $\bar x\not\leq\bar y$. So let $\bar\alpha$ be a future-directed timelike curve from $\bar y$ to $\bar p_3$. Note we do know that $\bar x\leq \bar\alpha(1)$, and that $\tau(f(\bar x),f(\bar\alpha(t)))\geq\tau(f(\bar x),f(\bar y))+\tau(f(\bar y),f(\bar\alpha(t)))>0$. Let $t>0$ be such that $\bar x\leq\bar\alpha(t)$ are null related, i.e.\ $\tau(\bar x,\bar\alpha(t))=0$. By the above cases, we know that also $\tau(f(\bar x),f(\bar\alpha(t)))=0$, contradicting our previous conclusion that this is $>0$, so this case cannot happen.
\end{proof}

\section{Parallel lines and rays}
Let us recall the following definition of complete timelike lines (cf.\ \cite[Definition 4.1]{BORS23}).
\begin{dfn}[Complete timelike lines]
Let $X$ be a \LpLSn. A \emph{complete timelike line} in $X$ is a $\tau$-arclength parametrized (thus timelike) curve $\alpha\colon \mathbb{R}\to X$ which is a geodesic between any of its points. A \emph{timelike ray} in $X$ is defined analogously but replacing $\mathbb{R}$ with $[0,\infty)$ or $(-\infty,0]$ for the domain of such curve.
\end{dfn}

We propose the following definition of parallel complete timelike lines, analogous to \cite[Definition 10.5.3]{burago-burago-ivanov2001}. 

\begin{dfn}[Synchronised parallel lines]
\label{dfn:synchronisedparallel}
Let $X$ be a \LpLSn. Two complete timelike lines $\alpha,\beta:\R\to X$ are called \emph{synchronised parallel} if there is a $c\geq0$ such that for all $s<t$
\[\alpha(s)\leq\beta(t) \Leftrightarrow t-s\geq c\]
and, if this is the case, we have
\[
\tau(\alpha(s),\beta(t))=\sqrt{(t-s)^2-c^2} \, ,
\]
and analogously for $\alpha,\beta$ interchanged.
The constant $c$ is called the \emph{spacelike distance} between $\alpha,\beta$, cf. \cite{BORS23}.
\end{dfn}

The following proposition is straightforward and establishes the equivalence between \Cref{dfn:synchronisedparallel} and \cite[Definition~4.16]{BORS23}.
\begin{pop}[Characterising synchronised parallel lines]
\label{pop: syncparallellines}
Let $X$ be a Lorentzian pre-length space with continuous time separation $\tau$ satisfying $\tau(x,x) = 0$ for all $x\in X$. Let $\alpha,\beta\colon \R\to X$ be two complete timelike lines. Then $\alpha$ and $\beta$ are synchronised parallel if and only if there is a $\tau$- and causality-preserving map $f:(\alpha(\R)\cup \beta(\R))\to \R^{1,1}$ of the form $f(\alpha(t))=(t,0)$ and $f(\beta(t))=(t,c)$ for some $c\geq 0$. 
\end{pop}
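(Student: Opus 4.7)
The plan is to unwind both sides against the explicit causal and $\tau$-structure of the two parallel timelike lines $\{(t,0):t\in\R\}$ and $\{(t,c):t\in\R\}$ in $\R^{1,1}$; essentially every assertion will reduce to a direct comparison between \Cref{dfn:synchronisedparallel} and the Minkowski structure coming from \Cref{def: lor product}.

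For the forward direction, given synchronised parallel lines $\alpha,\beta$ with spacelike distance $c\geq 0$, the candidate is the obvious map $f(\alpha(t))=(t,0)$, $f(\beta(t))=(t,c)$. Injectivity of $\alpha$ and $\beta$ individually is immediate from their being $\tau$-arclength parametrized, so the one step requiring verification is consistency whenever a point lies on both curves. If $c>0$, a short causal argument actually gives $\alpha(\R)\cap\beta(\R)=\emptyset$: an intersection $\alpha(s)=\beta(t)$ would make both $\alpha(s)\leq\beta(t)$ and $\beta(t)\leq\alpha(s)$ hold by reflexivity of $\leq$, forcing $t-s\geq c$ and $s-t\geq c$ simultaneously by \Cref{dfn:synchronisedparallel}, a contradiction. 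When $c=0$, the same reflexivity argument shows that whenever $\alpha(s)=\beta(t)$ one has $s=t$, so the two defining formulas for $f$ still agree on any overlap. Once $f$ is well-defined, $\tau$- and causality-preservation on pairs within the same curve follow directly from the arclength parametrization (matching $\tau(\alpha(s),\alpha(t))=\max\{t-s,0\}$ with the Minkowski value along a vertical line, and analogously for $\beta$), while for mixed pairs they are precisely the content of \Cref{dfn:synchronisedparallel} matched against the Minkowski time-separation formula from \Cref{def: lor product}.

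For the reverse direction, the existence of such an $f$ immediately translates the chronological and causal diamond inequalities of Minkowski space into the conditions of \Cref{dfn:synchronisedparallel}: using causality-preservation,
\[
\alpha(s)\leq\beta(t)\iff (s,0)\leq_{\R^{1,1}}(t,c)\iff t-s\geq c,
\]
and the symmetric statement with $\alpha,\beta$ swapped is identical; whenever this holds, $\tau$-preservation yields $\tau(\alpha(s),\beta(t))=\sqrt{(t-s)^2-c^2}$, as required.

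I expect no serious obstacle. The only non-cosmetic point is the well-definedness of $f$ in the forward direction, which is dispatched by the causal argument above; the rest of the proof is a pair of routine translations between \Cref{dfn:synchronisedparallel} and \Cref{def: lor product} applied to $\R^{1,1}$.
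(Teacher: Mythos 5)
The paper offers no proof of this proposition (it is merely asserted to be straightforward), so there is no argument to compare yours against; the only question is completeness. Your reverse direction is fine. In the forward direction, however, there is a recurring unjustified step: \Cref{dfn:synchronisedparallel} constrains the relation between $\alpha(s)$ and $\beta(t)$ \emph{only for $s<t$} (and, via the interchanged clause, between $\beta(s)$ and $\alpha(t)$ only for $s<t$). Causality preservation in the sense of \Cref{def:isometry} is a biconditional for \emph{all} pairs, so you must also show, for instance, that $\alpha(s)\not\leq\beta(t)$ whenever $s\geq t$ and $c>0$ --- a case on which the definition is silent. This is not automatic: one argues that $\alpha(s)\leq\beta(t)\ll\beta(t')$ for $\max(s,t)<t'<s+c$ would give $\alpha(s)\leq\beta(t')$ with $0<t'-s<c$, contradicting the definition; and when $c=0$ and $s>t$ one instead uses $\beta(t)\leq\alpha(t'')$ for $t<t''<s$ together with $\tau(x,x)=0$ and the reverse triangle inequality to exclude $\alpha(s)\leq\alpha(t'')$. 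You never invoke the hypothesis $\tau(x,x)=0$, which signals that exactly these cases (and the analogous backward cases within a single line, where $\tau(\alpha(s),\alpha(t))=0$ and $\alpha(s)\not\leq\alpha(t)$ for $s>t$ must be \emph{proved}) are being skipped. The same defect appears in your well-definedness argument: from $\beta(t)\leq\alpha(s)$ with $t>s$ you conclude $s-t\geq c$ ``by the definition,'' but the definition does not apply to that pair.

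One of the skipped cases is a genuine obstruction rather than a routine omission: if $c=0$ and $\alpha(t)\neq\beta(t)$, causality preservation forces $\alpha(t)\leq\beta(t)$ and $\beta(t)\leq\alpha(t)$, since $f$ sends both points to the same point of $\R^{1,1}$ and $\leq_{\R^{1,1}}$ is reflexive; this does not follow from \Cref{dfn:synchronisedparallel} together with continuity of $\tau$ and $\tau(x,x)=0$. (Two disjoint copies of a line, declared synchronised parallel with $c=0$ but with equal-parameter points not causally related, satisfy all the hypotheses.) This degenerate situation never occurs in the paper's applications, where either $c>0$ or the lines coincide, but a complete proof must either dispose of it or record that the equivalence is claimed only for $c>0$ or for $\alpha=\beta$.
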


We also have a notion of parallel timelike lines and rays in the spirit of \cite[Section 9.2.4]{burago-burago-ivanov2001} and \cite[Chapter II.8]{bridson-haefliger1999}

\begin{dfn}[Weakly parallel lines and rays]
\label{def:weakly parallel}
Let $X$ be a \LpLSn. 
Let $\alpha$ and $\beta$ be either two future-directed lines or rays (parametrized on $\R$ or $[0,\infty)$, respectively). 
We say that $\alpha$ is \emph{causally before} $\beta$, and denote it $\alpha\leq\beta$, if there exists an $s\geq 0$  such that for all $t$ in the domain of $\alpha$ and $\beta$ we have $\alpha(t)\leq\beta(t+s)$. 
We than call $\alpha$ and $\beta$ \emph{weakly parallel} if $\alpha\leq\beta\leq\alpha$.
\end{dfn}

\begin{rem}[Requiring timelike instead of causal relations]
In the previous definition we could equivalently ask for the existence of $s\geq 0$ such that $\alpha(t)\ll \beta(t+s)$ and $\beta(t)\ll \alpha(t+s)$ since $\alpha$ and $\beta$ are timelike and Lorentzian pre-length spaces satisfy the push-up property \cite[Lemma 2.10]{kunzinger-saemann2018}. 

Further note that \Cref{def:weakly parallel} can be reformulated similarly to the metric case: it is equivalent to the existence of $s\geq 0$, $\lambda>0$ such that for any $t\in [0,\infty)$, $\tau(\alpha(t),\beta(t+s))>\lambda$. 
\end{rem}

\begin{lem}[Weak parallelity as a relation]
\label{lem:weak parallel transitive}
Being weakly parallel defines an equivalence relation on the set of future-directed, complete timelike lines in $X$.
\end{lem}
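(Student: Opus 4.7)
The plan is to verify the three defining properties of an equivalence relation directly from \Cref{def:weakly parallel}, relying only on the fact that the causal relation $\leq$ in a Lorentzian pre-length space is reflexive and transitive (i.e.\ a preorder).

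For reflexivity, given any future-directed complete timelike line $\alpha$, choose $s = 0$ in \Cref{def:weakly parallel}: then $\alpha(t) \leq \alpha(t)$ for every $t$ by reflexivity of $\leq$, hence $\alpha \leq \alpha$ and consequently $\alpha$ is weakly parallel to itself. Symmetry is immediate from the symmetric form $\alpha \leq \beta \leq \alpha$ in the definition of weak parallelity.

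The only substantive step is transitivity. Suppose $\alpha$ is weakly parallel to $\beta$ and $\beta$ is weakly parallel to $\gamma$. Then there exist $s_1, s_2, s_3, s_4 \geq 0$ such that for all $t \in \R$ we have $\alpha(t) \leq \beta(t+s_1)$, $\beta(t) \leq \alpha(t+s_2)$, $\beta(t) \leq \gamma(t+s_3)$, and $\gamma(t) \leq \beta(t+s_4)$. Applying the second pair of inequalities with shifted arguments and composing with transitivity of $\leq$ yields
\[
\alpha(t) \leq \beta(t+s_1) \leq \gamma(t+s_1+s_3) \quad \text{and} \quad \gamma(t) \leq \beta(t+s_4) \leq \alpha(t+s_4+s_2)
\]
for all $t$, which are exactly the defining inequalities of $\alpha \leq \gamma$ (with parameter $s_1+s_3$) and $\gamma \leq \alpha$ (with parameter $s_2+s_4$). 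Hence $\alpha$ and $\gamma$ are weakly parallel.

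No serious obstacle arises: the argument is a direct book-keeping exercise in the definition, and the only ingredients used are the preorder axioms for $\leq$, both of which hold in any Lorentzian pre-length space. In particular, no curvature, causality or continuity assumption on $X$ beyond what is built into the definition of a \LpLS is needed.
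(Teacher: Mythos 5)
Your proposal is correct and takes essentially the same approach as the paper: the paper's proof is a one-line remark that reflexivity and transitivity follow from the corresponding properties of $\leq$ together with adding the parameters, and your write-up simply spells out that bookkeeping in full.
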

\begin{proof}
Symmetry follows by definition. Reflexivity and transitivity follow from reflexivity and transitivity of the causal relation and adding the respective parameters.
\end{proof}

\begin{lem}[Convergence in $d$-arclength implies convergence in $\tau$-arclength]\label{lem:conv_d_vs_tau}
Let $X$ be a \LpLS with continuous $\tau$. Assume that $X$ is $d$-compatible or non-totally imprisoning. Let $\gamma_n:[0,c_n]\to X$ be a sequence of timelike geodesics parametrized by $\tau$-arclength. Suppose the reparametrizations to $d$-arclength converge locally uniformly to a timelike curve $\gamma$ (which is then a geodesic). Then also the original curves converge pointwise to the $\tau$-arclength parametrization of $\gamma$ on the domain of the target.
\end{lem}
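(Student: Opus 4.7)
The plan is to interpret the $\tau$- and $d$-arclength parametrizations as two parametrizations of a single underlying curve and to track how the change-of-parameters functions behave under the limit. For each $n$, I would introduce $\phi_n\colon[0,c_n]\to[0,\ell_n]$ defined by $\phi_n(t):=L_d(\gamma_n|_{[0,t]})$, where $\ell_n$ is the total $d$-length of $\gamma_n$, so that the $d$-arclength reparametrization is $\tilde\gamma_n=\gamma_n\circ\phi_n^{-1}$. Because $\gamma_n$ is $\tau$-arclength parametrized, its inverse has the closed form $\phi_n^{-1}(s)=\tau(\tilde\gamma_n(0),\tilde\gamma_n(s))$. Defining analogously $\phi^{-1}(s):=\tau(\gamma(0),\gamma(s))$ on the domain of $\gamma$, the fact that $\gamma$ is a timelike geodesic ensures that $\phi^{-1}$ is a strictly increasing continuous bijection onto $[0,c]$, where $c:=L_\tau(\gamma)$; the $\tau$-arclength parametrization of the target is then $\gamma^\tau=\gamma\circ\phi$.

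Next I would establish that the change-of-parameters functions converge. The continuity of $\tau$ combined with locally uniform convergence $\tilde\gamma_n\to\gamma$ yields pointwise (indeed locally uniform) convergence $\phi_n^{-1}\to\phi^{-1}$ on the domain of $\gamma$. Since $\phi^{-1}$ is a homeomorphism onto $[0,c]$ and the $\phi_n^{-1}$ are continuous and strictly monotone, a standard elementary argument then gives $\phi_n(t)\to\phi(t)$ pointwise on $[0,c]$.

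The proof would then conclude with a single estimate: for $t\in[0,c]$, using that $\tilde\gamma_n$ is $1$-Lipschitz with respect to $d$,
\[
d(\gamma_n(t),\gamma^\tau(t))\leq d(\tilde\gamma_n(\phi_n(t)),\tilde\gamma_n(\phi(t)))+d(\tilde\gamma_n(\phi(t)),\gamma(\phi(t)))\leq|\phi_n(t)-\phi(t)|+d(\tilde\gamma_n(\phi(t)),\gamma(\phi(t)))\,.
\]
Both summands vanish as $n\to\infty$: the first by the convergence of reparametrizations from the previous step, the second by locally uniform convergence of $\tilde\gamma_n$.

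The main obstacle I expect lies in the middle step: one has to verify that the domains $[0,\ell_n]$ are eventually large enough to cover any compact subset of the domain of $\phi^{-1}$, and that $\phi^{-1}$ is genuinely a homeomorphism onto $[0,c]$. This is precisely where the hypothesis of $d$-compatibility or non-total imprisonment enters, guaranteeing that the $d$-lengths $\ell_n$ are finite and interact with the $\tau$-lengths in a controlled way in the limit. Once these technicalities are in place, what remains is the essentially elementary observation that pointwise convergence of continuous strictly monotone functions to a homeomorphism implies pointwise convergence of their inverses.
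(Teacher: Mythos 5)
Your proposal is correct and follows essentially the same strategy as the paper's proof: in both cases, the key observation is that $\tau$-arclength-to-$d$-arclength change-of-parameter functions can be written as $\tau(\text{starting point},\,\cdot\,)$ along the $d$-arclength parametrized curve, so continuity of $\tau$ plus locally uniform convergence gives pointwise convergence of these reparametrization functions, and an elementary argument about strictly monotone continuous bijections then transfers that to their inverses; a final triangle inequality concludes. The only cosmetic differences are that you bookkeep with $\phi_n=f_n^{-1}$ rather than with $f_n$ directly, and in the concluding estimate you split via the intermediate point $\tilde\gamma_n(\phi(t))$ (using $1$-Lipschitzness of the $d$-arclength parametrization) whereas the paper splits via $(\gamma\circ f)(t_n)$ (using continuity of $\gamma\circ f$); both are valid. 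You also correctly identify the role of $d$-compatibility/non-total imprisonment, namely to ensure the $d$-lengths $\ell_n$ are finite so that the reparametrizations are well-defined surjections, and you flag (though do not spell out) the domain issue of $\ell_n$ eventually exceeding $\phi(t)$, which the paper handles via lower semi-continuity of $L_\tau$.
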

\begin{proof}
As $L_{\tau}$ is upper semi-continuous and $\tau$ is lower semi-continuous (even continuous by the curvature bound), we infer that $\gamma$ is a geodesic. 

Now, we denote $\gamma$ as the $\tau$-arclength parametrized version. 
Let $p_n=\gamma_n(0)$, $p=\gamma(0)$. 
Let $f_n$ denote the reparametrizations into $d$-arclength and $f$ the reparametrization to the parametrization given in the limit  -- these are continuous, monotonous and injective. 
We also have that $f_n$ is surjective onto the domain of $\gamma_n$: If $X$ is $d$-compatible, take $t$ in the domain of $\gamma_n$, then we can cover the image of $\gamma_n|_{[0,t]}$ by finitely many $d$-compatible neighbourhoods, showing that the $d$-length of $\gamma_n|_{[0,t]}$ is finite. If $X$ is non-totally imprisoning, $\gamma_n([0,t])$ is compact and thus the $d$-length of $\gamma_n|_{[0,t]}$ is finite.\footnote{For $f$, surjectivity holds by definition.}
Then we have $\gamma_n \circ f_n \to \gamma \circ f$ locally uniformly and we want to show $\gamma_n(s) \to \gamma(s)$ for all $s$ in the interior of the domain of $\gamma$. We have $\tau(p,\gamma(f(t)))=\lim_{n\to\infty}\tau(p_n,\gamma_n(f_n(t)))$, thus $L_\tau(\gamma)\leq\liminf_{n\to\infty}L_\tau(\gamma_n)$, i.e.\ at any parameter in the interior of the domain of $\gamma$ all but finitely many $\gamma_n$ are defined. Let $s$ be arbitrary in the interior of the domain of $\gamma$, then there exist $t$ and $t_n$ such that $s=f(t)=f_n(t_n)$. 
We claim that $t_n \to t$. Indeed, let $\varepsilon > 0$ arbitrary. 
Then by convergence in $d$-arclength, $\gamma_n(f_n(t-\varepsilon))\to\gamma(f(t-\varepsilon))$ and by continuity of $\tau$,
\[f_n(t-\varepsilon)=\tau(p_n,\gamma_n(f_n(t-\varepsilon)))\to\tau(p,\gamma(f(t-\varepsilon)))=f(t-\varepsilon)\]
and so $t-\varepsilon < f_n^{-1}(s)=t_n$ (for large enough $n$). 
Similarly, $f_n^{-1}(s) < t + \varepsilon$ (for large enough $n$), yielding $\lim_n t_n=\lim_nf_n^{-1}(s) =t$. 

Clearly, we have 
\[
d(\gamma_n(s),\gamma(s)) \leq d(\gamma_n(s),\gamma(f(f_n^{-1}(s)))) + d(\gamma(f(f_n^{-1}(s))),\gamma(s)) \, .
\] 
Concerning the first term, we have 
\[d(\gamma_n(s),\gamma(f(f_n^{-1}(s)))) = d(\gamma_n(f_n(t_n)),\gamma(f(t_n))) \to 0\] 
as $\gamma_n \circ f_n \to \gamma \circ f$ locally uniformly and $t_n$ is bounded as $t_n \to t$. 
Concerning the second term, we make use of $f(f_n^{-1}(s)) \to s$, which in turn is implied by continuity of $f$ and $f_n^{-1}(s) \to f^{-1}(s)$, i.e., by $t_n \to t$. 
\end{proof}

\begin{lem}[Timelike asymptotes are rays]\label{lem:tl_asy_are_rays}
Let $X$ be a globally hyperbolic, locally causally closed, \LpLS with timelike curvature globally bounded above by $0$, and proper $d$. 
Let $\gamma$ be a future-directed timelike ray, $x\in I^-(\gamma)$, and $\eta$ the $\tau$-arclength parametrized limit curve of the $d$-arclength parametrized sequence $[x,\gamma(t_n)]$, for some $t_n\to \infty$. Then $L_\tau(\eta)$ is either $0$ (i.e.\ $\eta$ is null) or infinite (i.e.\ $\eta$ is a ray). 
\end{lem}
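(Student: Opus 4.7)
I argue by contradiction: assume $\eta$ is timelike and $L := L_\tau(\eta) \in (0, \infty)$. Writing $\sigma_n : [0, a_n] \to X$ for the $\tau$-arclength parametrization of $[x, \gamma(t_n)]$ and $\tilde\sigma_n$ for its $d$-arclength reparametrization, the reverse triangle inequality together with $x \in I^-(\gamma)$ gives $a_n := \tau(x, \gamma(t_n)) \to \infty$ (pick $t_0$ with $x \ll \gamma(t_0)$ to get $a_n \geq \tau(x, \gamma(t_0)) + (t_n - t_0)$). I next establish $L_d(\sigma_n) \to \infty$: otherwise, by properness of $d$, a subsequence of $\gamma(t_n) = \tilde\sigma_n(L_d(\sigma_n))$ stays in a compact $d$-ball and converges to some $z \in X$, and continuity of $\tau$ under global hyperbolicity then yields the impossibility $\tau(x, z) = \lim a_n = \infty$.

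Since the $\tilde\sigma_n$ are $1$-Lipschitz from $x$, we have $\tilde\sigma_n(R) \in \overline{B_d(x, R)}$, compact by properness. A diagonal Arzel\`a--Ascoli argument (combined with $L_d(\sigma_n) \to \infty$) lets me assume, after passing to a further subsequence, that $\tilde\sigma_n \to \tilde\eta$ locally uniformly on all of $[0, \infty)$. By \Cref{lem:conv_d_vs_tau}, $\tilde\eta$ is then a timelike geodesic and $\sigma_n(s) \to \eta(s)$ pointwise on $(0, L)$.

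The contradiction is now derived by analysing the $\tau$-to-$d$ speed ratio along $\sigma_n$. Define $\phi_n(r) := \tau(x, \tilde\sigma_n(r)) \in [0, a_n]$. Using the upper curvature bound -- specifically, that in a $(\leq 0)$-comparison neighbourhood a timelike geodesic has a constant $d$-to-$\tau$ speed ratio, which one extracts from \Cref{pop:triangleCBequality} applied to short sub-triangles with two sides lying on $\sigma_n$ -- each $\phi_n$ is affine with slope $c_n := a_n / L_d(\sigma_n) \in (0, 1]$, so $\phi_n(r) = c_n r$. Along a further subsequence $c_n \to c \in [0, 1]$, and continuity of $\tau$ gives $\tau(x, \tilde\eta(r)) = cr$ for every $r \geq 0$. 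If $c = 0$, then $\tilde\eta$ is null, contradicting $L_\tau(\eta) > 0$; if $c > 0$, then $L_\tau(\tilde\eta) = \sup_r cr = \infty$, contradicting $L < \infty$. Either way the assumption $L \in (0,\infty)$ is untenable, proving the lemma.

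\textbf{Main obstacle.} The crucial technical step is the affinity of $\phi_n$, i.e.\ the constancy of the $d$-to-$\tau$ speed ratio along the timelike geodesics $\sigma_n$. This is the non-smooth Lorentzian analogue of the affine-parameter property of geodesics, immediate in smooth Lorentzian geometry but requiring justification in the pre-length space setting. It should follow from \Cref{pop:triangleCBequality} applied to triangles of the form $\triangle(\sigma_n(s), \sigma_n(s'), y)$ for $y$ slightly off $\sigma_n$, forcing a model-space isometry along $\sigma_n$ that rigidifies its reparametrization. If this direct affinity proof proves delicate, one can instead work with the averaged slope $c_n = a_n/L_d(\sigma_n)$ and exploit a concavity-type estimate for $\phi_n$ coming from the upper curvature bound (so $\phi_n(r) \geq c_n r$), which still yields the same dichotomy $c = 0$ (null) versus $c > 0$ (infinite $\tau$-length).
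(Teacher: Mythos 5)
Your proposal has a genuine gap at exactly the step you flag as the ``main obstacle'': the asserted constancy of the $d$-to-$\tau$ speed ratio along a timelike geodesic. In a Lorentzian pre-length space the auxiliary metric $d$ is essentially arbitrary — it is only required to generate a topology under which $\tau$ has certain (semi-)continuity properties — and it has no intrinsic relationship to $\tau$. The rigidity result \Cref{pop:triangleCBequality} produces a $\tau$- and causality-preserving isometry onto a flat region of $\mathbb{R}^{1,1}$; it preserves neither $d$ nor $d$-arclength, so it cannot rigidify the reparametrization $g_n$ relating $d$-arclength to $\tau$-arclength on $\sigma_n$. (One can already see the problem in $\mathbb{R}^{1,1}$ itself: replace the Euclidean background metric with a conformally distorted one — $\tau$, $\ll$, $\leq$ and the topology are unchanged, so the space is still a Lorentzian pre-length space with a $(\leq 0)$-bound, yet the $d$-to-$\tau$ ratio along a straight timelike line is no longer constant.) The fallback concavity estimate fails for the same reason, so you cannot derive $\tau(x,\tilde\eta(r))\geq cr$, and the $c=0$ vs.\ $c>0$ dichotomy does not go through. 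There is also a minor slip in the step establishing $L_d(\sigma_n)\to\infty$: the endpoints $\gamma(t_n)=\tilde\sigma_n(L_d(\sigma_n))$ do \emph{not} converge to some $z\in X$ (they run off along the ray); the correct reason is that $d(x,\gamma(t_n))\to\infty$ by properness, which bounds $L_d(\sigma_n)$ from below.

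The paper's proof takes a route you did not attempt and which does not touch the $d$-parametrization issue. It uses the upper curvature bound on the triangles $\triangle(x,\gamma(0),\gamma(t_n))$ together with the intercept theorem in the Minkowski comparison plane to show $\tau(\beta_n(s),\gamma(s))\geq(1-s/t_n)\,\tau(x,\gamma(0))$, hence in the limit $\eta(s)\ll\gamma(s)\ll\gamma(t_0)$ for $s\in[0,t_0)$. Thus $\eta([0,t_0))\subseteq J(x,\gamma(t_0))$, which is compact by global hyperbolicity, so $\eta$ has \emph{finite} $d$-length. If $L_d(\beta_n)\to\infty$ the limit curve theorem makes $\eta$ inextendible, contradicting finite $d$-length; otherwise a bounded subsequence of $L_d(\beta_n)$ and upper semi-continuity of $L_\tau$ force $L_\tau(\eta)=\infty$, also a contradiction. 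I would recommend replacing your speed-ratio analysis by a comparison argument of this flavour: the key geometric fact your proof is missing is that, under the $(\leq 0)$ bound, the limit curve cannot escape the chronological strip between $x$ and $\gamma$.
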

\begin{proof}
Without loss of generality, assume $x\ll\gamma(0)$. Let $t_n\to \infty$ and $\beta_n = [x,\gamma(t_n)]$. Let us assume for the sake of contradiction that $0<L_\tau(\eta) = t_0 <\infty$. By \Cref{lem:conv_d_vs_tau},  $\beta_n(s)\to \eta(s)$ for any $s\in [0,t_0)$. Moreover, by triangle comparison on $\triangle(x,\gamma(0),\gamma(t_n))$, 
\[
\tau(\beta_n(s), \gamma(s)) \geq \tau_{\mathbb{R}^{1,1}}\left(\overline{\beta_n(s)},\overline{\gamma(s)}\right)
\]
where $\overline{\beta_n(s)}$ and $\overline{\gamma(s)}$ are comparison points for $\beta_n(s), \gamma(s)$ in a comparison triangle for $\triangle(x,\gamma(0),\gamma(t_n))$ in $\mathbb{R}^{1,1}$, for $n$ sufficiently large such that $s<t_n$. However, by the reverse triangle inequality, $\tau(x,\gamma(t_n)) \geq \tau(\gamma(0),\gamma(t_n)) = t_n$, so setting $s':= s t_n/\tau(x,\gamma(t_n))$ we have $s' \leq s$. Therefore,
\[
\tau_{\mathbb{R}^{1,1}}\left(\overline{\beta_n(s)},\overline{\gamma(s)}\right) \geq \tau_{\mathbb{R}^{1,1}}\left(\overline{\beta_n(s)},\overline{\gamma(s')}\right)
\]
and by the intercept theorem,
\[
\tau_{\mathbb{R}^{1,1}}\left(\overline{\beta_n(s)},\overline{\gamma(s')}\right) = \left(1 - \frac{s}{t_n}\right)\tau(x,\gamma(0)) \, .
\]
Thus 
\[
\tau(\beta_n(s), \gamma(s)) \geq  \left(1 - \frac{s}{t_n}\right)\tau(x,\gamma(0)) \, .
\]
By letting $n\to \infty$, we get
\[
\tau(\eta(s), \gamma(s)) \geq \tau(x,\gamma(0)) > 0
\]
implying that $\eta(s) \ll \gamma(s)$. Since $\gamma(s) \ll \gamma(t_0)$ for all $s\in [0,t_0)$, then $\eta(s) \ll \gamma(t_0)$. It follows that $\eta([0,t_0))\subseteq J(x,\gamma(t_0))$ and by global hyperbolicity, $\eta$ must have finite $d$-length. 

Now, if $L_d(\beta_n)\to \infty$ then, by \cite[Theorem~3.14]{kunzinger-saemann2018}, it follows that $\eta$ is inextendible, contradicting that $\eta$ has finite $d$-arclength.

On the other hand, if $L_d(\beta_n) \not\to \infty$ up to passing to a subsequence we can assume that $L_d(\beta_n)$ is bounded. By \cite[Theorem~3.7]{kunzinger-saemann2018} and upper semi-continuity of $L_\tau$ (applied to $1$-Lipschitz reparametrizations of $\beta_n$ and $\eta$ on a common, compact domain), this implies $L_\tau(\eta) \geq \lim_{n\to\infty} L_\tau(\beta_n) = \infty$, which contradicts $\eta\in J(x,\gamma(t_0))$.
\end{proof}

\begin{pop}[Existence and uniqueness of weakly parallel rays]
\label{pop: existence and uniqueness of weakly parallel rays}
Let $X$ be a locally causally closed, regular \LpLS with timelike curvature globally bounded above by $0$. 
Assume that $X$ is either $d$-compatible or non-totally imprisoning. 
Suppose that the metric $d$ is proper. 
Let $\alpha$ be a future-directed ray and $p\in I^-(\alpha)$. 
Then there exists a unique (future-directed) ray $\beta$ weakly parallel to $\alpha$ starting at $p$. 
\end{pop}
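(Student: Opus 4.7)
The plan is to construct $\beta$ as a subsequential limit of geodesics from $p$ to points escaping to infinity along $\alpha$. First I would fix a sequence $t_n \to \infty$ and let $\gamma_n$ be the unique (by the upper curvature bound, cf.\ \cite[Theorem~4.7]{BNR25}) timelike geodesic from $p$ to $\alpha(t_n)$, reparametrized by $d$-arclength on $[0, L_n]$. Because $p \in I^-(\alpha)$, fix $a = \alpha(0)$ with $p \ll a$: the reverse triangle inequality yields $\tau(p, \alpha(t_n)) \geq \tau(p, a) + t_n \to \infty$, and $d$-compatibility or non-total imprisonment combined with properness of $d$ then force $L_n \to \infty$. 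Properness of $d$ and Arzel\`a--Ascoli applied to the $1$-Lipschitz reparametrizations $\tilde\gamma_n$ furnish a locally uniform subsequential limit $\tilde\beta \colon [0, \infty) \to X$, still causal by local causal closedness. \Cref{lem:conv_d_vs_tau} then gives pointwise convergence of the $\tau$-arclength reparametrizations of $\gamma_n$ to the $\tau$-arclength parametrization $\beta$ of $\tilde\beta$ on the latter's domain. To verify $L_\tau(\tilde\beta) = \infty$ I would mimic the intercept-theorem argument in the proof of \Cref{lem:tl_asy_are_rays} (giving $\tau(\gamma_n(s), \alpha(s)) \geq (1 - s/t_n)\tau(p, a)$) and then use properness of $d$ in place of global hyperbolicity to rule out finite-$\tau$-length limits. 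Weak parallelism itself then follows by passing to the limit in the corresponding Minkowski-comparison inequalities $\bar\beta(t) \leq \bar\alpha(t+s)$, using closedness of $\leq$ and continuity of $\tau$, with shift $s$ and spacelike separation $c$ inherited from the limiting Minkowski configuration.

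\textbf{Uniqueness.} Suppose $\beta_1, \beta_2$ are two future-directed rays from $p$ weakly parallel to $\alpha$; by \Cref{lem:weak parallel transitive} they are weakly parallel to each other, with shift parameters $s_1, s_2 \geq 0$ in the sense of \Cref{def:weakly parallel}. My aim is to show $\ma_p(\beta_1, \beta_2) = 0$, which by uniqueness of geodesics in $(\leq 0)$-comparison neighbourhoods forces $\beta_1 \equiv \beta_2$ on an initial interval and hence on all of $[0, \infty)$ by the ray-extension property. For $0 < t_0 \ll T$ I would consider the timelike quadrangle
\[
Q_T = (\beta_1(t_0),\, \beta_2(t_0 + s_1^+),\, \beta_1(T + s_2^+),\, \beta_2(T)),
\]
for slightly enlarged shifts $s_i^+ > s_i$ obtained by push-up so that all required relations are chronological; its vertices satisfy the order hypothesis of \Cref{pop:quadrangle rigidity}, and its two diagonals run along $\beta_1$ and $\beta_2$ respectively. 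The first variation formula (\Cref{fvf}), applicable via \Cref{rem:assumptions on fvf} because $X$ is globally $(\leq 0)$, computes the four vertex angles of $Q_T$; combined with two-sided bounds on $\tau(\beta_1(T + s_2^+), \beta_2(T))$ and $\tau(\beta_1(t_0), \beta_2(t_0 + s_1^+))$ coming from weak parallelism and Minkowski comparison, this should verify the signed-angle hypothesis \eqref{eq:quadranglerigidity}. Then \Cref{pop:quadrangle rigidity} produces an isometric Minkowski filling of $Q_T$, and patching these over $T \to \infty, t_0 \to 0$ would realise the region bounded by $\beta_1 \cup \beta_2$ as a planar strip in which two distinct future-directed timelike rays from a common point cannot be weakly parallel.

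\textbf{Main obstacle.} The hard part will be rigorously verifying the signed-angle inequality \eqref{eq:quadranglerigidity} for $Q_T$: this requires precise asymptotic control of the angles at all four vertices via \Cref{fvf} and sharp two-sided estimates on $\tau(\beta_1(T + s_2^+), \beta_2(T))$ approaching its Minkowski synchronised-parallel value $\sqrt{(s_1^+ + s_2^+)^2 - c^2}$ as $T \to \infty$. A secondary technicality is the null-limit exclusion in the existence step without global hyperbolicity, addressed via the linear-in-$s$ intercept estimate described above.
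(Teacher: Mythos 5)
Your existence strategy follows the paper's in outline (limit of the geodesics $[p,\alpha(t_n)]$ via the limit curve theorem, then \zcref[S]{lem:conv_d_vs_tau} and \zcref[S]{lem:tl_asy_are_rays}), but it skips the two steps that carry the real weight. First, you never exclude that the limit curve is \emph{null}: \zcref[S]{lem:tl_asy_are_rays} only rules out $0<L_\tau(\beta)<\infty$ and explicitly allows $L_\tau(\beta)=0$, so "$L_\tau=\infty$'' does not follow from mimicking its intercept argument. The paper handles this with a separate argument that is exactly where the regularity hypothesis enters: one picks $y_n\in[p,\alpha(t_n)]$ converging to some $y\neq p$ on $\beta$, shows via \zcref[S]{lem:angleSumMink} and monotonicity comparison that $\tma_p(\alpha(0),\alpha(t_n))$ is bounded, and then the Lorentzian law of cosines forces either $\tau(p,y_n)$ bounded away from $0$ (so $\beta$ is initially timelike) or $\tau(y,\alpha(0))=\tau(p,\alpha(0))$ with $y\neq p$, contradicting regularity. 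Your proposal never uses regularity, which is a sign this case is missing. Second, weak parallelism is not simply "inherited from the limiting Minkowski configuration'': the direction $\beta_n(s)\ll\alpha(s+d)$ is indeed free from the intercept theorem, but the converse direction $\alpha(s)\ll\beta_n(s+d_n)$ only gives shift constants $d_n$ that a priori blow up with $n$. The bulk of the paper's existence proof is devoted to showing that $\tma_{\alpha(t_n)}(p,\alpha(0))\to 0$ and that $\tau(p,\alpha(t_n))-\tau(\alpha(0),\alpha(t_n))$ stays bounded, which is what makes $\sup_n d_n<\infty$ and lets the relation pass to the limit. You should also be careful with the claim that properness of $d$ can replace global hyperbolicity inside \zcref[S]{lem:tl_asy_are_rays}: that lemma uses compactness of $J(x,\gamma(t_0))$, which properness alone does not provide.

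For uniqueness your route is genuinely different from the paper's and, as written, does not close. The paper's argument is short and direct: with $\theta(s,t)=\tma_p(\beta_1(s),\beta_2(t))$ nondecreasing by monotonicity comparison, the polar-coordinate description of chronology in $\R^{1,1}$ gives $(s+c)/s>\exp(\theta(s,s+c))$, so $\theta(s,s+c)\to 0$ as $s\to\infty$, forcing $\theta\equiv 0$; degenerate comparison triangles plus the neighbourhood basis $\{I(\beta_2(t-\varepsilon),\beta_2(t+\varepsilon))\}$ then give $\beta_1=\beta_2$. Your plan instead routes through \zcref[S]{pop:quadrangle rigidity} and a flat strip bounded by $\beta_1\cup\beta_2$, but the hypothesis \eqref{eq:quadranglerigidity} for $Q_T$ is precisely the content you would need to prove, and you give no mechanism for it: the paper only ever verifies that inequality for \emph{complete} weakly parallel lines, where \zcref[S]{lem:constancy of time separation} and \zcref[S]{lem:constancy of angle} apply because a concave positive function on all of $\R$ is constant. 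On a ray the corresponding function on $[0,\infty)$ is merely concave, the angles at the four vertices need not match up, and the first variation formula by itself does not produce the required inequality. So the central step of your uniqueness argument is an acknowledged but unfilled gap, and I would recommend replacing it with the comparison-angle argument above, which needs none of the quadrangle machinery.
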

\begin{proof}
\begin{figure}
\centering
\begin{tikzpicture}[line cap=round,line join=round,>=triangle 45,x=0.4cm,y=0.4cm]
\clip(-2.,-0.5) rectangle (6.5,16.);
\draw [line width=0.5] (0.,0.)-- (4.,4.);
\draw [line width=0.5] (4.,4.) -- (4.,15.);
\draw [line width=0.5, dotted] (4.,15.) -- (4.,16.);
\draw [line width=0.5] (0.,0.) -- (0.,15.);
\draw [line width=0.5, dotted] (0.,15.) -- (0.,16.);
\draw [line width=0.5] (0.,0.)-- (4.,8.);
\draw [line width=0.5] (0.,0.)-- (4.,14.);
\draw [line width=0.5] (0.,0.)-- (2.5,15.);
\draw [line width=0.5, dotted] (2.5,15.)-- (4.,24.);
\begin{scriptsize}
\draw [fill=black] (0.,0.) circle (1.5pt);
\draw[color=black] (-0.5634124823322493,0) node {$p$};
\draw [fill=black] (4.,4.) circle (1.5pt);
\draw[color=black] (5.2,4) node {$\alpha(t_-)$};
\draw[color=black] (2.5,1.85) node {$\beta_0$};
\draw [fill=black] (4.,14.) circle (1.5pt);
\draw[color=black] (5.2,14) node {$\alpha(t_2)$};
\draw[color=black] (4.5,10.391434358586004) node {$\alpha$};
\draw[color=black] (-0.6020971201847769,8) node {$\beta$};
\draw [fill=black] (4.,8.) circle (1.5pt);
\draw[color=black] (5.2,8) node {$\alpha(t_1)$};
\draw[color=black] (2.5,4.027811431845205) node {$\beta_1$};
\draw[color=black] (2.5,7.199951735752474) node {$\beta_2$};
\draw[color=black] (1.3,10.933019288521391) node {$\beta_3$};
\end{scriptsize}
\end{tikzpicture}
\caption{Construction of the weakly parallel ray $\beta$.}
\label{fig:constructingbeta}
\end{figure}
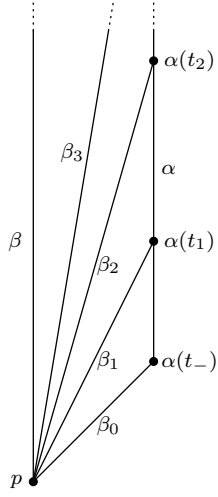
The structure of the proof is as follows. First we prove uniqueness. 
Then we construct a candidate for a timelike ray weakly parallel to $\alpha$ and starting at $p$. This is done by applying the limit curve theorem and \Cref{lem:conv_d_vs_tau}. Crucially, the regularity assumption guarantees the causal character of the resulting ray $\beta$. Then, we prove that $\beta$ is weakly parallel to $\alpha$. This morally follows by observing that comparison triangles of the form $\triangle (\overline{p},\overline{\alpha(0)},\overline{\alpha(t)})$ can be arranged inside a strip in the Minkowski plane bounded by two parallel, future-directed rays and a hyperbola. This eventually leads to the definition of an appropriate constant $d$ such that $\alpha(s)\ll \beta(s+d)$ and $\beta(s)\ll \alpha(s+d)$. 

Concerning uniqueness, suppose there are two rays $\beta_1$ and $\beta_2$ weakly parallel to $\alpha$ through $p$. 
By \Cref{lem:weak parallel transitive}, it follows that also $\beta_1$ and $\beta_2$ are weakly parallel to each other. 
Set $\theta(s,t) = \tma_p(\beta_1(s),\beta_2(t))$ and let $c > 0$ be such that $\beta_1(s) \ll \beta_2(s+c)$ for all $s>0$. 
Note that $\theta$ is non-decreasing due to monotonicity comparison. 
Consider a comparison triangle in Minkowski space for the triangle $\triangle(p, \beta_1(s), \beta_2(s+c))$. 
Then $0 < \tau(\beta_1(s),\beta_2(s+c)) =\tau(\overline{ \beta_1(s)},\overline{ \beta_2(s+c)})$. 
Expressing the timelike relation between two points in the Minkowski plane given in polar coordinates, we obtain $\overline{\beta(s)} \ll \overline{\beta(s+c)} \iff \frac{s+c}{s} > \exp(\theta(s,s+c))$. 
However, $\frac{s+c}{s} \to 1$ as $s \to \infty$, and hence $\theta(s,s+c) \to 0$. 
The non-decreasing property of $\theta$ thus forces $\theta \equiv 0$. In other words, all comparison triangles are degenerate. 
Thus, equality of the rays can be concluded as follows, using strong causality. 
Indeed, given arbitrary $t < s$ and $\varepsilon > 0$, by the degeneracy of the comparison triangle for $\triangle(p, \beta_1(s), \beta_2(s+c))$, we clearly have $\overline{\beta_2(s-\varepsilon)} \ll \overline{\beta_1(s)} \ll \overline{\beta_2(s+\varepsilon)}$. 
By triangle comparison, we infer $\beta_2(s-\varepsilon) \ll \beta_1(s) \ll \beta_2(s+\varepsilon)$. 
It follows that $\{I(\beta_2(t-\varepsilon), \beta_2(t+\varepsilon)) \mid \varepsilon > 0 \}$ is a neighbourhood basis for $\beta_1(t)$, hence $\beta_2(t)=\beta_1(t)$.

Let us now prove existence. 
As $p\in I^-(\alpha)$, there exists $t_-$ such that $p\ll \alpha(t)$ for all $t\geq t_-$. By the global curvature bound, we find geodesics from $p$ to any such $\alpha(t)$. 
Since $X$ is either $d$-compatible or non-totally imprisoning, we can reparametrize geodesics defined on compact intervals with respect to $d$-arclength and apply the limit curve theorem \cite[Theorem 3.14]{kunzinger-saemann2018} to obtain a sequence $t_n \to \infty$ such that the $d$-arclength parametrizations of $\beta_n:=[p,\alpha(t_n)]$ converge locally uniformly to a causal curve $\beta$ starting at $p$, see \Cref{fig:constructingbeta}.   
As $L_{\tau}$ is upper semi-continuous and $\tau$ is lower semi-continuous, we infer that $\beta$ is a geodesic, although it might be null a priori. 

We first show that $\beta$ is timelike. 
Without loss of generality, suppose that $t_-=0$. 
Let $z=\alpha(0)$ and $z_n=\alpha(t_n)$ for $n\geq1$.
Then we have $\widetilde\measuredangle_p(z,z_n) + \widetilde\measuredangle_{z_n}(p,z) = \widetilde\measuredangle_z(p,z_n)$ by \Cref{lem:angleSumMink}. 
As $\widetilde\measuredangle_z(p,z_n)$ is decreasing in $n$ using the monotonicity formulation of timelike curvature bounded above, we infer that $\widetilde\measuredangle_p(z,z_n)$ is bounded. 
Choose a sequence $y_n \in [p,z_n]$ that converges to some point $y \neq p$ on $\beta$ (e.g.\ by choosing $y_n = \beta_n \circ f_n(\varepsilon)$ in the notation of \Cref{lem:conv_d_vs_tau}). 
Again by monotonicity comparison, we get $\widetilde \measuredangle_p(z,z_n) \geq \widetilde \measuredangle_p(z,y_n)$, so this angle is bounded as well. 
Using the Lorentzian law of cosines applied to the comparison triangle of $\triangle(p,y_n,z_n)$, we compute that either $\tau(p,y_n)$ stays bounded away from zero, or that $\tau(y_n,z_n) \to \tau(p,z_n)$. 
In the latter case, we would have $p \leq y \ll z$, $p \neq y$ and $\tau(p,z)=\tau(y,z)$, a contradiction to the regularity of $X$. 
Thus, $\tau(p,y_n)$ is bounded away from zero, which by the continuity of $\tau$ makes $\beta$ initially timelike and by regularity timelike on the entire domain. 
An application of \Cref{lem:tl_asy_are_rays} shows that $\beta$ is a timelike ray.

Now we show that $\beta$  is weakly parallel to $\alpha$. 
Let $a=\tau(p,z), b_n=\tau(z,z_n), c_n=\tau(p,z_n), \theta_n=\tma_{z_n}(p,z)$ and $\omega_n=\tma_{z}(p,z_n)$. 
Then $b_n, c_n \to \infty$ and $\omega_n$ is decreasing.
We can reformulate the Lorentzian law of cosines in $\triangle(\bp_n,\bz,\bz_n)$ to obtain 
\begin{align*}
\frac{a^2 + 2ab_n \cosh(\omega_n)}{c_n+b_n}  & = c_n - b_n \, ,
\end{align*}
which gives that $c_n - b_n$ is bounded. 
Applying the Lorentzian law of cosines in the same triangle but instead using the angle $\theta_n$, we observe that
\begin{align*}
\cosh(\theta_n) & = \frac{b_n^2 + c_n^2 - a^2}{2b_n c_n} \to 1  
\end{align*}
as $c_n - b_n$ is bounded, which shows that $\theta_n \to 0$. 

We can now continue to work explicitly in the model space. 
Denote by $\bp_n$ the comparison points for $p$ in the comparison triangles for $\triangle(p,z,z_n)$ and keep $\bz$ fixed in place in all comparison triangles. 
Moreover, align all comparison triangles in such a way that $[\bz,\bz_n]$ is vertical. 
The idea is to find bounded $d_n$ such that
\[
[\bp_n,\bz_n](s) \ll [\bz,\bz_n](s+d_n) \text{ and } [\bz,\bz_n](s) \ll [\bp,\bz_n](s+d_n)
\]
for all $s$ such that both sides are defined. 
We then show that this is inherited by the limit configuration in $X$. 
The existence of individual values $d_n$ is not an issue, but they may fail to be bounded.
Note that $\bp_n$ is confined to the intersection of a vertical strip with boundary lines through $\bz$ and $\bp_0$ and a hyperbola with centre $\bz$ and radius $\tau(p,z)$. Choose $\bz=(0,0)$, then $[\bz,\bz_n]: s \mapsto (s,0)$ and $[\bp_n,\bz_n]: s \mapsto \bp_n + (\cosh(\theta_n)s,\sinh(\theta_n)s)$ are explicit descriptions of these segments. 
Visually, this means that we have chosen $\bp_n$ to be to the left of the segment $[\bz,\bz_n]$, see \Cref{fig: parallel ray comparison configuration}.
\begin{figure}
\begin{center}
\begin{tikzpicture}
\draw (0,0) -- (0,4);
\draw (0,0) -- (-1.5,-2.5) -- (0,4);
\draw[dotted] (0,0) -- (0,-3);
\draw[dotted] (-1.5,-3) -- (-1.5,4);

\draw[dashed,scale=1,domain=-40:40,smooth,variable=\t] plot ({sqrt(2.5^2-1.5^2)*tan(\t)},{-sqrt(2.5^2-1.5^2)*sec(\t)});

\begin{scriptsize}
\coordinate [circle, fill=black, inner sep=0.5pt, label=0:{$\bz$}] (z) at (0,0);
\coordinate [circle, fill=black, inner sep=0.5pt, label=180:{$\bp_n$}] (pn) at (-1.5,-2.5);
\coordinate [circle, fill=black, inner sep=0.5pt, label=0:{$\bz_n$}] (zn) at (0,4);
\pic [draw, ->, "${\omega_n}$", angle eccentricity=0.5] {angle = zn--z--pn};
\pic [draw, ->, "${\theta_n}$", angle radius=1.2cm, angle eccentricity=1.2] {angle = pn--zn--z};
\end{scriptsize}

\end{tikzpicture}
\end{center}
\caption{The $n$-th step comparison triangle.}
\label{fig: parallel ray comparison configuration}
\end{figure}
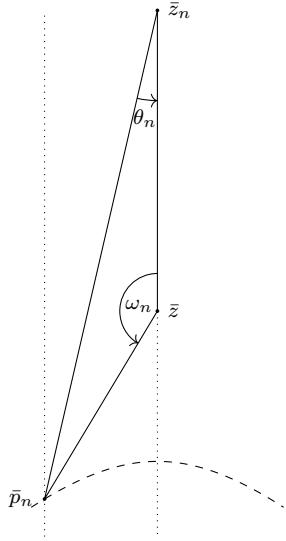

First note that the relation $[\bar p_n,\bar z_n](s)\ll[\bar z,\bar z_n](s+d)$ holds for for any $n\geq 0$ and $d\geq 0$. Indeed the intercept theorem from flat geometry and the relation $\bar p_n\ll\bar z$ imply $[\bar p_n,\bar z_n](\tilde s)\ll[\bar z,\bar z_n](s)$, where $\tilde s$ is chosen such that $\tilde s/L_\tau([\bar p_n,\bar z_n])=s/L_\tau([\bar z,\bar z_n])$. By the reverse triangle inequality the $\tau$-arclength of the curve $[\bar p_n,\bar z_n]$ is longer, hence $s<\tilde s$ and the claim follows. 

Conversely, the desired relation $(s,0)  \ll \bp_n+(\cosh(\theta_n)(s+d_n),\sinh(\theta_n)(s+d_n))$ is true if and only if
\begin{align*}
\cosh(\theta_n)(s+d_n)+t(\bp_n)-s & > -x(\bp_n)-\sinh(\theta_n)(s+d_n) \ [>0] \ & (\iff) \\
\exp(\theta_n)(s+d_n)+t(\bp_n)-s & > -x(\bp_n) \ & (\iff) \\
\exp(\theta_n)d_n & >-x(\bp_n)-t(\bp_n)-s(\exp(\theta_n)-1) \, .
\end{align*}

We notice that $-x(\bp_n)-t(\bp_n)$ is bounded in $n$, and for each $n$ the `worst case' is at $s=0$. This allows us to choose 
\begin{equation*}
d := \sup_n \{ (-x(x_n)-t(x_n))\exp(-\theta_n))\} \, ,
\end{equation*}
which works for all large enough $n$ in the Minkowski plane. 
By the upper curvature bound, this choice of $d$ also works in $X$ for large enough $n$, i.e., $\beta_n(s) \ll \alpha|_{[0,t_n]}(s+d)$ and $\alpha|_{[0,t_n]}(s) \ll \beta_n(s+d)$ for all $s$ where these are defined. 
Letting now $n \to \infty$ gives the claim. 
\end{proof}

In the next section, we will prove that a non-positively curved space which is covered by weakly parallel lines splits as a product. The following lemma and corollary provide an equivalent formulation of 
that assumption, stating that there exists a parallel line through a point if the parallel rays constructed in \Cref{pop: existence and uniqueness of weakly parallel rays} have zero angle. In other words, these parallel rays having positive angle is the only obstruction to the space splitting. 
For example, this is the case in de Sitter space.

\begin{lem}[Concatenation of geodesics with zero angle]\label{lem:concat0angle}
Let $X$ be a \LpLS with timelike curvature globally bounded above by $K$ and let $\alpha:[a,b]\to X,\beta:[b,c]\to X$ be two timelike geodesics with $x=\alpha(b)=\beta(b)$, $\ma_x(\alpha,\beta)=0$ and $\tau(\alpha(a),\alpha(b))+\tau(\beta(b),\beta(c))\leq D_K$. Then the concatenation of $\alpha,\beta$ is a geodesic.
\end{lem}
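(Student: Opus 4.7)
The plan is to show that the concatenation of $\alpha$ and $\beta$ is locally distance-realising at the join point $x := \alpha(b) = \beta(b)$, and then conclude via the local-to-global principle for geodesics in $(\leq K)$-comparison neighbourhoods, in the form used in the proof of \Cref{pop:quadrangle rigidity} via \cite[Remark 2.6]{erös-gieger2025}. Away from $x$, local distance-realisation is immediate since $\alpha$ and $\beta$ are themselves geodesics, so only the join point requires work.

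For the local step at $x$, I would show that for small $\varepsilon_1,\varepsilon_2>0$,
\[
\tau(\alpha(b-\varepsilon_1),\beta(b+\varepsilon_2))=\tau(\alpha(b-\varepsilon_1),x)+\tau(x,\beta(b+\varepsilon_2)).
\]
Continuity of $\tau$ at $(x,x)$, where $\tau(x,x)=0$, ensures that for small $\varepsilon_i$ all three side lengths of the triangle $\triangle(\alpha(b-\varepsilon_1),x,\beta(b+\varepsilon_2))$ lie below $D_K$, so a comparison triangle $\bar\triangle(\bar p,\bar x,\bar q)$ in $\lm{K}$ exists. By \eqref{eq:angle condition} and the hypothesis $\ma_x(\alpha,\beta)=0$,
\[
\tma_x^{K,S}(\alpha(b-\varepsilon_1),\beta(b+\varepsilon_2))\leq\ma_x^S(\alpha,\beta)=\ma_x(\alpha,\beta)=0,
\]
where the second equality uses that the two branches meeting at $x$ have opposite time orientations, so the signed angle equals the unsigned one. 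The comparison configuration at $\bar x$ likewise has opposite time orientations, so its signed angle coincides with the non-negative unsigned hyperbolic angle; combined with the inequality above, this forces $\tma_x^K=0$. The Lorentzian law of cosines in $\lm{K}$ then degenerates the comparison triangle, placing $\bar p,\bar x,\bar q$ on a single timelike line with $\tau(\bar p,\bar q)=\tau(\bar p,\bar x)+\tau(\bar x,\bar q)$. Since by definition of a comparison triangle $\tau(\bar p,\bar q)=\tau(\alpha(b-\varepsilon_1),\beta(b+\varepsilon_2))$, the local claim follows, making the concatenation a local geodesic everywhere, which the local-to-global result then promotes to a genuine geodesic on $[a,c]$.

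The main obstacle I anticipate is size-bound bookkeeping: before invoking the comparison triangle one must ensure $\tau(\alpha(b-\varepsilon_1),\beta(b+\varepsilon_2))<D_K$, which is handled by continuity of $\tau$ near $(x,x)$ in the local step. The global hypothesis $\tau(\alpha(a),\alpha(b))+\tau(\beta(b),\beta(c))\leq D_K$ enters through the local-to-global step, guaranteeing that the full concatenation stays within the regime where uniqueness of geodesics and the comparison machinery apply.
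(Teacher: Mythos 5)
Your proof is correct, but it takes a more roundabout route than the paper's. The core of your argument is essentially the same: use the angle condition \eqref{eq:angle condition} together with $\ma_x(\alpha,\beta)=0$ and the fact that the signed and unsigned angle coincide (opposite time orientations at $x$) to force the comparison angle at the vertex to be zero, whence the comparison triangle degenerates and $\tau(p,q)=\tau(p,x)+\tau(x,q)$. The difference is scope. The paper applies this idea \emph{once}, directly to the full configuration $\triangle(\alpha(a),x,\beta(c))$, phrased as hinge comparison: the comparison hinge has angle $0$, so $\tilde\alpha,\tilde\beta$ join to a single geodesic of length $\tau(\alpha(a),\alpha(b))+\tau(\beta(b),\beta(c))\leq D_K$, and hinge comparison gives $\tau(p_-,p_+)\leq\tau(\tilde p_-,\tilde p_+)$; the reverse triangle inequality gives the other direction, and the concatenation is a geodesic in one stroke. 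You instead run the degenerate-triangle argument only on small triangles near $x$, conclude the concatenation is a local geodesic, and then appeal to the local-to-global promotion of \cite[Remark 2.6]{erös-gieger2025}. That works, and it has the modest advantage of sidestepping any worry about whether a comparison triangle for the full $\triangle(\alpha(a),x,\beta(c))$ satisfies size bounds when $K>0$; but it brings in an extra external lemma and more bookkeeping. Note that your own argument, applied with $\varepsilon_1=b-a$ and $\varepsilon_2=c-b$, reproduces the paper's proof directly, with the hypothesis $\tau(\alpha(a),\alpha(b))+\tau(\beta(b),\beta(c))\leq D_K$ guaranteeing existence of the comparison hinge; so the local detour, while safe, is not needed.
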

\begin{proof}
Let $p_-=\alpha(a)$ and $p_+=\beta(c)$. Construct a comparison hinge $(\tilde\alpha,\tilde\beta)$ for $(\alpha,\beta)$: As $\ma_x(\alpha,\beta)=0$, $\tilde\alpha,\tilde\beta$ join to a single geodesic.
By hinge comparison for the hinge $(\alpha,\beta)$, we get that $\tau(p_-,p_+)\leq\tau(\tilde p_-,\tilde p_+)=\tau(\alpha(a),\alpha(b))+\tau(\beta(b),\beta(c))$, the converse is just the reverse triangle inequality, making the concatenation a geodesic.
\end{proof}
\begin{cor}[Parallel rays with zero angle fit together to a line]\label{cor:concat0angle}
Let $X$ be a globally hyperbolic regular \LpLS with timelike curvature globally bounded above by $0$. Let $\alpha$ be a complete timelike line, $p \in I^-(\alpha) \cap I^+(\alpha)$ and $\beta^+$ and $\beta^-$ the unique future and past rays emanating from $p$ and weakly parallel to the half lines $\alpha^+=\alpha|_{[0,\infty)}$ and $\alpha^-=\alpha|_{(-\infty,0]}$. Then the concatenation of $\beta^+$ and $\beta^-$ is a line if and only if $\ma_p(\beta^-,\beta^+)=0$ and in this case it is the unique weakly parallel line to $\alpha$ through $p$.
\end{cor}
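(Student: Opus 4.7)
The plan is to split the proof into three parts: the two directions of the equivalence, followed by the uniqueness claim.

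For the forward direction, suppose $\beta := \beta^-\cdot\beta^+$ is a complete timelike line; parametrizing it by $\tau$-arclength with $\beta(0)=p$, $\beta(t)=\beta^+(t)$ for $t\geq 0$ and $\beta(-t)=\beta^-(t)$ for $t\geq 0$, we obtain $\tau(\beta^-(s),\beta^+(t))=s+t$ for all $s,t>0$. The comparison triangle in $\lm{0}$ with these side lengths is degenerate (collinear), and the Lorentzian law of cosines forces $\tma_p(\beta^-(s),\beta^+(t))=0$ at $\bar p$. Passing to the limit as $s,t\to 0$ then yields $\ma_p(\beta^-,\beta^+)=0$, a short calculation that happens entirely in the model space.

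For the backward direction, note that since $K=0$ we have $D_K=\infty$, so the size-bound hypothesis in \Cref{lem:concat0angle} is automatic. Applying that lemma to arbitrary finite pieces $\beta^-|_{[0,s]}$ and $\beta^+|_{[0,t]}$ concatenated at $p$ shows their join is a geodesic, hence $\tau(\beta^-(s),\beta^+(t))=s+t$ for every $s,t\geq 0$. Pasting these together produces a candidate curve $\beta:\R\to X$ as above, and one verifies $\tau(\beta(r),\beta(u))=u-r$ for $r\leq u$ by case analysis on the signs of $r,u$ (the mixed-sign case being precisely the identity just established, and the same-sign cases following from $\tau$-arclength parametrization of $\beta^\pm$). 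Thus $\beta$ is a complete timelike line.

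For the uniqueness claim, I first show that $\beta$ is weakly parallel to $\alpha$, and then use \Cref{pop: existence and uniqueness of weakly parallel rays} to conclude. Let $s_+,s_-\geq 0$ be parameters realizing the weak parallelism of $\beta^+$ to $\alpha^+$ and (by time-duality) of $\beta^-$ to $\alpha^-$, and set $s:=s_++s_-$. Then $\alpha(t)\leq\beta(t+s)$ holds for $t\geq 0$ directly from $\alpha^+\leq\beta^+$ with shift $s_+$ (and monotonicity in the shift), for $t\leq -s_-$ directly from the past statement, and for $t\in[-s_-,0]$ by bridging via $\alpha(t)\leq\alpha(0)\leq\beta(s_+)\leq\beta(t+s)$; the reverse inequalities are symmetric. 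This gives weak parallelism of $\beta$ to $\alpha$. Finally, if $\gamma$ is any complete timelike line through $p$ weakly parallel to $\alpha$, restricting the shift inequalities to $t\geq 0$ (resp.\ $t\leq 0$) shows that $\gamma^+$ is weakly parallel to $\alpha^+$ and $\gamma^-$ to $\alpha^-$, so the uniqueness in \Cref{pop: existence and uniqueness of weakly parallel rays} forces $\gamma^\pm=\beta^\pm$ and hence $\gamma=\beta$.

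The main obstacle is the middle step of the uniqueness argument, namely upgrading weak parallelism of the two halves (with possibly distinct shifts $s_\pm$) to weak parallelism of the full line; while not deep, it requires a careful case split over the ``gap'' $t\in[-s_-,0]$ and the choice of a combined shift. The remaining pieces reduce either to a one-line comparison-space computation or to direct invocations of \Cref{lem:concat0angle} and \Cref{pop: existence and uniqueness of weakly parallel rays}.
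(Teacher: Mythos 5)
Your proposal is correct and follows essentially the same route as the paper: the forward direction via degenerate comparison triangles, the backward direction via \Cref{lem:concat0angle} (with the observation that $D_0=\infty$ makes the size bound vacuous), weak parallelism by combining the shifts of the two halves and bridging the gap $t\in[-s_-,0]$ through $p$, and uniqueness by reducing to the uniqueness of weakly parallel rays in \Cref{pop: existence and uniqueness of weakly parallel rays}. The paper's own proof is terser but structurally identical.
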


\begin{proof}
Note that by \Cref{pop: existence and uniqueness of weakly parallel rays}, we find $\beta^\pm$. If they fit together to a line, it is clear that $\ma_p(\beta^-,\beta^+)=0$. Conversely, if $\ma_p(\beta^-,\beta^+)=0$,  \Cref{lem:concat0angle} implies that they fit together to a timelike line.

Concerning the weak parallelity, we already know that $\alpha^+(t) \leq \beta^+(t+s) \leq \alpha^+(t+2s)$ and $\alpha^-(t) \geq \beta^-(t+s) \geq \alpha^-(t+2s)$ for a suitable choice of $s>0$. Then choosing $2s$ in the definition of weakly parallel lines avoids having to deal with relations between $\alpha^\mp$ and $\beta^\pm$: e.g.\ $\alpha^-(t)\leq \alpha^+(0)\leq\beta^+(t+2s)$ if $-s<t<0$. 

Finally, uniqueness also follows from \Cref{pop: existence and uniqueness of weakly parallel rays}.
\end{proof}

\section{Splitting theorem}
Finally, we will prove our main theorem, i.e. a splitting theorem for Lorentzian pre-length spaces with timelike curvature bounded above by $0$. This result can be seen as a Lorentzian version of a classical splitting theorem for certain metric spaces of non-positive curvature (see e.g.\ \cite[Theorem 9.2.31.]{burago-burago-ivanov2001}). Note that the metric theorem is stated as a result for Hadamard spaces i.e. complete, simply connected spaces with a (local) curvature bound above by $0$. It can be shown (see \cite[Theorem 9.2.9]{burago-burago-ivanov2001}) that in such spaces the curvature bound actually holds globally, i.e. the entire space $X$ is a curvature comparison neighbourhood, a fact the proof of the splitting theorem makes heavy use of. It can also be shown that in the class of complete metric spaces of non-positive curvature the curvature bound holding globally is equivalent to the space being simply connected (see \cite[Remark 9.2.11.]{burago-burago-ivanov2001}). Therefore assuming the space to be a Hadamard space and assuming a global curvature bound are essentially the same. \\

The situation in the Lorentzian setting is very similar. There does not exist a well established notion of Lorentzian Hadamard spaces, but such a definition would probably at least assume the space to have timelike curvature bounded from above by $0$ and be future one-connected. The latter means that between any two future-directed timelike curves $\gamma_0,\gamma_1$ with the same endpoints there exists an endpoint preserving homotopy $\gamma_t$, such that $\gamma_t$ is still future-directed timelike. Indeed it can be shown that if one assumes these two properties as well as additional regularity for the Lorentzian pre-length space (for example a globally hyperbolic and regular Lorentzian length space would be enough) this once again implies a global curvature bound (see \cite[Theorem 4.8]{erös-gieger2025}). In fact, analogously to the metric case it can be shown that in the class of globally hyperbolic and regular Lorentzian length spaces with timelike curvature bounded above by $0$ the curvature bound holding globally is equivalent to the space being future one-connected.

\begin{pop}[Non-positive curvature and future one-connectedness]
    Let $X$ be a globally hyperbolic and regular Lorentzian length space with timelike curvature globally bounded above by $0$. Then $X$ is future one-connected.
\end{pop}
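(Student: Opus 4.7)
The plan is to reduce the problem to a single curve: show that every future-directed timelike curve $\gamma$ from $p$ to $q$ is homotopic, rel endpoints and through future-directed timelike curves, to the unique timelike geodesic $[p,q]$. Global hyperbolicity guarantees that such a geodesic exists, while the global $(\leq 0)$-comparison property combined with \cite[Theorem 4.7]{BNR25} guarantees that it is unique. Concatenating the resulting homotopy for $\gamma_0$ with the reverse of the analogous homotopy for $\gamma_1$ then produces the desired timelike homotopy from $\gamma_0$ to $\gamma_1$.

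For the core construction, given $\gamma\colon[0,1]\to X$ future-directed timelike from $p$ to $q$, I would define the ``shrinking'' homotopy
\[
H(r,s) = \begin{cases}\gamma(s),& s\leq r,\\[2pt] [\gamma(r),q]\bigl(\tfrac{s-r}{1-r}\bigr),& r<s\leq 1,\end{cases}
\]
extended by $H(1,\cdot)=\gamma$. Then $H(0,\cdot)$ is the geodesic $[p,q]$ affinely reparametrized onto $[0,1]$, $H(1,\cdot)=\gamma$, and the endpoints $H(r,0)=p$, $H(r,1)=q$ are preserved. Regularity ensures each geodesic $[\gamma(r),q]$ is actually timelike; the slice $H(r,\cdot)$ is thus the continuous concatenation of a timelike sub-arc of $\gamma$ with a timelike geodesic, and monotone with respect to $\ll$ by transitivity of $\ll$ through $\gamma(r)$ in the mixed case.

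The main obstacle, and the real content of the proof, is joint continuity of $H$ at points with $s=r$ or $r=1$. Everything reduces to continuity of the geodesic-endpoint map $x\mapsto[x,q]$ on $I^-(q)$ (in, say, $d$-arclength parametrization). I would derive this from the limit curve theorem \cite[Theorem 3.14]{kunzinger-saemann2018}: for $x_n\to x$ in $I^-(q)$, the $d$-arclength reparametrizations of $[x_n,q]$ subconverge locally uniformly to a causal limit curve $\sigma$ from $x$ to $q$; upper semi-continuity of $L_\tau$ combined with continuity of $\tau$ inside the global $(\leq 0)$-comparison neighbourhood force $L_\tau(\sigma)=\tau(x,q)>0$, so $\sigma$ is a timelike geodesic, and global uniqueness of geodesics identifies $\sigma=[x,q]$, upgrading subsequential convergence to full convergence. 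Transferring this to the affine reparametrization entering $H$ is handled by \Cref{lem:conv_d_vs_tau}. The corner $r=1$ is harmless because the entire segment $[\gamma(r),q]$ shrinks to $\{q\}$ as $r\to 1$, so the two branches of $H$ merge continuously there.
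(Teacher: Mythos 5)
Your proposal is correct and its skeleton coincides with the paper's: homotope each $\gamma_i$ rel endpoints through future-directed timelike curves onto a timelike geodesic from $p$ to $q$, then invoke uniqueness of geodesics in the global $(\leq 0)$-comparison neighbourhood to identify the two target geodesics. The difference is that the paper's proof is essentially two citations --- it outsources the ``every timelike curve is timelike-homotopic to a geodesic'' step to \cite[Proposition~4.6]{erös-gieger2025} and the uniqueness step to \cite[Proposition~2.3]{erös-gieger2025} --- whereas you reconstruct the first ingredient explicitly via the shrinking homotopy $H$. Your construction is sound and self-contained modulo the continuity of the endpoint map $x\mapsto[x,q]$, and your sketch of that continuity (limit curve theorem, upper semi-continuity of $L_\tau$ against continuity of $\tau$, uniqueness of the maximizer, then \Cref{lem:conv_d_vs_tau} to pass to the parametrization actually used in $H$) is exactly the argument the paper itself deploys elsewhere (e.g.\ in the existence proof for weakly parallel rays), so it fits the available toolbox. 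Two small points to tidy up rather than gaps: the appropriate limit curve theorem here is the bounded-length version \cite[Theorem~3.7]{kunzinger-saemann2018} (the curves $[x_n,q]$ live in a compact causal diamond by global hyperbolicity, hence have uniformly bounded $d$-length by non-total imprisonment), not the inextendible-limit version Theorem~3.14; and joint continuity of $H$ at interior points $(r,s)$ with $r<s<1$ needs locally uniform (not merely pointwise) convergence of the reparametrized geodesics, which follows from their uniform Lipschitz bounds in $d$-arclength together with the pointwise identification of the limit. Neither issue affects the validity of the approach.
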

\begin{proof}
    Let $\gamma_0,\gamma_1:[0,1]\to X$ be two future-directed timelike curves, such that $p:=\gamma_0(0)=\gamma_1(0)$ and $q:=\gamma_0(1)=\gamma_1(1)$. If we can find an endpoint preserving timelike homotopy between $\gamma_0$ and $\gamma_1$ we have proven that $X$ is future one-connected. By \cite[Proposition 4.6]{erös-gieger2025} there exist future-directed timelike geodesics $\alpha_1,\alpha_2$ from $p$ to $q$ such that $\gamma_i$ is timelike homotopic to $\alpha_i$ via an endpoint preserving homotopy. Moreover, by \cite[Proposition 2.3]{erös-gieger2025}, between any two timelike related points $p,q$ in a ($\leq 0$)-comparison neighbourhood (such as $X$) there exists, up to reparametrization, a unique timelike geodesic. This implies that $\alpha_0=\alpha_1$ and in particular there exists an endpoint preserving timelike homotopy between $\gamma_0$ and $\gamma_1$.
\end{proof}

While the above proposition proves that for a large class of spaces it does not matter whether one assumes a local curvature bound plus future one-connectedness or a global curvature bound, this equivalence only holds under additional assumptions on the space. Therefore, to weaken our assumptions, we will henceforth just assume $X$ to be a Lorentzian pre-length space with timelike curvature globally bounded above by $0$. 

\begin{lem}[Weakly parallel lines have constant $\tau$-distance]
\label{lem:constancy of time separation}
Let $\alpha_1, \alpha_2$ be two future-directed, weakly parallel complete timelike lines in $X$ and let $c\in \mathbb{R}$. Then $t\mapsto\tau(\alpha_1(t),\alpha_2(t+c))$ is constant.
\end{lem}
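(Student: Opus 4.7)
The plan is to reduce the constancy of $f(t) := \tau(\alpha_1(t), \alpha_2(t+c))$ to the quadrangle rigidity proposition (\Cref{pop:quadrangle rigidity}), together with the first variation formula (\Cref{fvf}) and the extension properties of complete timelike lines via \Cref{pop:triangle inequality angles}.

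First I would observe that $f$ is well-defined and bounded: by weak parallelism there exists $s \geq 0$ with $\alpha_2(t+c) \leq \alpha_1(t+c+s)$, and the reverse triangle inequality yields $f(t) \leq c+s$. This, together with the fact that $\alpha_1(t) \ll \alpha_2(t+c)$ for all $t$ (assuming $c$ is large enough, which we may, since otherwise the statement is vacuous), gives a uniformly bounded positive function $f$ to which the first variation formula applies at each $t$. Applied at both endpoints of $\beta_t := [\alpha_1(t), \alpha_2(t+c)]$, \Cref{fvf} informally gives
\[
\tfrac{d}{dt} f(t) = -\cosh(\theta_1(t)) + \cosh(\theta_2(t)),
\]
with $\theta_1(t) = \ma_{\alpha_1(t)}(\alpha_1, \beta_t)$ and $\theta_2(t) = \ma_{\alpha_2(t+c)}(\alpha_2, \beta_t)$, so that constancy of $f$ reduces to $\theta_1(t) = \theta_2(t)$ for all $t$.

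For each $t$ and small $h > 0$, I would form the quadrangle $p_1 = \alpha_1(t), p_2 = \alpha_1(t+h), p_4 = \alpha_2(t+c), p_3 = \alpha_2(t+c+h)$ satisfying $p_1 \ll p_2 \ll p_4 \ll p_3$, with sides $[p_1,p_2] \subset \alpha_1$ and $[p_4, p_3] \subset \alpha_2$. Since $\alpha_1, \alpha_2$ extend as geodesics past the interior vertices $p_2, p_4$, \Cref{pop:triangle inequality angles}\ref{triIneqAng:AlongGeo} gives
\[
\ma_{p_2}(p_1, p_3) \leq \ma_{p_2}(\alpha_1|_{[t+h,\infty)}, [p_2, p_3]), \quad \ma_{p_4}(p_1, p_3) \leq \ma_{p_4}([p_4,p_1], \alpha_2|_{(-\infty,t+c]}).
\]
Iterating along $\alpha_1$ and $\alpha_2$, combined with semi-continuity of angles (\Cref{rem:CBBfact}\ref{CBBfact1}) and the uniqueness of weakly parallel rays (\Cref{pop: existence and uniqueness of weakly parallel rays}), these bounds translate into the angle-sum hypothesis
\[
\ma_{p_1}(p_2,p_4) + \ma_{p_3}(p_2,p_4) \geq \ma_{p_2}(p_1,p_3) + \ma_{p_4}(p_1,p_3)
\]
required by \Cref{pop:quadrangle rigidity}. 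The rigidity proposition then supplies an isometric embedding of the filled quadrangle into $X$, sending the sides $[\bar p_1, \bar p_2]$ and $[\bar p_4, \bar p_3]$ to the Minkowski images of $\alpha_1|_{[t,t+h]}$ and $\alpha_2|_{[t+c,t+c+h]}$.

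Gluing these local flat quadrangles along their shared cross-segments (using uniqueness of geodesics in $(\leq 0)$-comparison neighbourhoods, \cite[Theorem~4.7]{BNR25}), one builds a flat Minkowski strip containing both line segments; within this strip the weak-parallelism condition forces $\alpha_1$ and $\alpha_2$ to embed as \emph{actually} parallel timelike lines of $\mathbb{R}^{1,1}$, and then direct computation in Minkowski gives $\tau(\bar p_1,\bar p_4) = \tau(\bar p_2,\bar p_3)$, i.e.\ $f(t) = f(t+h)$. Continuity of $\tau$ along the lines then upgrades local constancy to global constancy on $\mathbb{R}$. The main obstacle is \textbf{Step 2}: translating the interior-angle bounds at $p_2, p_4$ into estimates on the corner angles at $p_1, p_3$. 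This is the place where the global line structure (both past and future extensions of $\alpha_1, \alpha_2$) must really be used, and where one must exploit that $\alpha_i^{\pm}$ themselves are weakly parallel rays whose angle-to-cross-segment behaviour can be controlled via iteration and the FVF. An alternative route bypassing the rigidity proposition would be to work directly with the FVF representation of $f'$ and show, using monotonicity of comparison angles under the upper curvature bound and Lemma~\ref{lem:angleSumMink}, that $f$ is monotonic in $t$ in both directions and hence constant, but this still requires essentially the same angle book-keeping to close.
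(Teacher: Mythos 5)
Your proposal takes a genuinely different route from the paper's, and as it stands it has gaps that cannot be closed in the order you propose. The paper's proof is much shorter: it quotes from the proof of \cite[Proposition~6.1]{beran-kunzinger-rott2024} that $t\mapsto\tau(\alpha_1(t),\alpha_2(t+c))$ is \emph{concave where positive}, observes that a concave positive function on all of $\R$ must be constant, and then spends the remaining half of the argument showing that for each fixed $c$ the relation $\alpha_1(t)\ll\alpha_2(t+c)$ holds either for every $t$ or for no $t$. Your parenthetical ``assuming $c$ is large enough, \dots\ otherwise the statement is vacuous'' is exactly where that second half is missing: for intermediate $c$ the statement is not vacuous, and ruling out the mixed case (time separation positive for some $t$ and zero for others) is a necessary part of the lemma.

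The more serious problem is your Step~2, which you yourself flag as the main obstacle. To invoke \Cref{pop:quadrangle rigidity} you must establish the corner angle-sum inequality, and the way the paper obtains it later (in \Cref{lem:flat strip}) is via \Cref{lem:constancy of angle}, whose proof uses the constant value $F(c)$ of $t\mapsto\tau(\alpha_1(t),\alpha_2(t+c))$ --- i.e.\ precisely the statement you are trying to prove, so the route is circular. Your sketch of how to avoid this (``iterating along $\alpha_1$ and $\alpha_2$, combined with semi-continuity of angles and uniqueness of weakly parallel rays'') does not produce the inequality: \Cref{pop:triangle inequality angles}~\ref{triIneqAng:AlongGeo} bounds the angles at the interior vertices $p_2,p_4$ by angles to the continuations of the lines, but converting those into bounds by the angles at $p_1,p_3$ is exactly the content of the constant-angle lemma. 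There are also hypothesis mismatches: \Cref{pop:quadrangle rigidity} needs $X$ globally causally closed, and \Cref{pop: existence and uniqueness of weakly parallel rays} needs regularity and properness of $d$, none of which is assumed in \Cref{lem:constancy of time separation} (the paper adds causal closedness only in \Cref{lem:flat strip}). The correct logical order is the paper's: constancy of $\tau$ first (via concavity), then constancy of angles, then the flat strip.
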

\begin{proof}
Since $X$ has timelike curvature globally bounded above by $0$, the proof of \cite[Proposition 6.1]{beran-kunzinger-rott2024} implies that the map $t\mapsto \tau(\alpha_1(t),\alpha_2(t+c))$ is concave where positive. Let $c>0$ be chosen such that $\alpha_1(t)\ll\alpha_2(t+c)$ for all $t\in\mathbb R$, then $t\mapsto \tau(\alpha_1(t),\alpha_2(t+c))$ is a concave and positive function on $\R$ and hence constant. If $c$ is chosen such that $\alpha_1(t)\not\ll \alpha_2(t+c)$ for all $t\in\R$, then the map is constant $0$. It remains to show that all $c\in\R$ fall into one of these two scenarios. 

Suppose to the contrary, that there exist parameters $t_0,c_0\in\mathbb R$ such that 
\[\tau(\alpha_1(t_0),\alpha_2(t_0+c_0))=\varepsilon>0\]
but $\alpha_1(t)\ll \alpha_2(t+c_0)$ does not hold for all $t\in\mathbb R$. We denote the set of all $c$ such that $\tau(\alpha_1(t),\alpha_2(t+c))=:F(c)$ is constant and positive as $\mathcal C$. Note that this means $c_0<c$ for all $c\in\mathcal C$. Moreover, the map $\mathcal{C}\ni c\mapsto F(c)$ is continuous and becomes arbitrarily small by continuity of the time separation function. In particular, there exists a $c\in\mathcal C$ such that $F(c)<\varepsilon$. But then, 
\begin{align*}
    \tau(\alpha_1(t_0),\alpha_2(t_0+c_0))&>\tau(\alpha_1(t_0),\alpha_2(t_0+c))\\
    &\geq\tau(\alpha_1(t_0),\alpha_2(t_0+c_0))+\tau(\alpha_2(t_0+c_0),\alpha_2(t_0+c)) \, ,
\end{align*}
which is a contradiction.
\end{proof}

\begin{rem}[On concavity]
Observe the proof of \cite[Proposition 6.1]{beran-kunzinger-rott2024} works in the setting of the previous lemma, without the assumption of the strict causal triangle comparison condition. This is because, adapting the proof to our setting, the only part where the strict causal triangle comparison condition is used is to verify that, given a timelike geodesic $\gamma\colon [0,1]\to X$ between $\alpha_1(t_1)$ and $\alpha_2(t_2+c)$ with $t_1<t_2$, the causal relation 
\[
\alpha_1(st_2+(1-s)t_1)\le \gamma(s)\le \alpha_2(st_2+(1-s)t_1+c)
\]
holds.
However, in a previous step of the same proof, it is shown that 
\[
\tau(\alpha_1(st_2+(1-s)t_1),\gamma(s))\geq s\tau(\alpha_1(t_2),\alpha_2(t_2+c))
\]
and 
\[
\tau(\gamma(s),\alpha_2(st_2+(1-s)t_1+c))\geq (1-s)\tau(\alpha_1(t_1),\alpha_2(t_1+c)) \, ,
\]
and since $\alpha_1(t)\ll \alpha_2(t+c)$ for any $t\in \mathbb{R}$, the required causal (actually chronological) relations hold.
\end{rem}

As in the proof of \Cref{lem:constancy of time separation}, for any $c\in\mathbb{R}$ such that $\alpha_1(t)\ll \alpha_2(t+c)$ for all $t\in\mathbb{R}$, we will henceforth denote by $F(c)$ the constant value of the map $t\mapsto\tau(\alpha_1(t),\alpha_2(t+c))$.

\begin{lem}[Weakly parallel lines have constant angles]
\label{lem:constancy of angle}
Under the same hypothesis of \Cref{lem:constancy of time separation}, let $d\neq 0$ and $c\in \mathbb{R}$ such that $\alpha_1(t-c)$ and $\alpha_2(t)$ are timelike related for all $t\in\mathbb{R}$. Then the map
\[
t\mapsto \ma_{\alpha_{2}(t)} (\alpha_1(t-c),\alpha_2(t+d))
\]
is constant.
\end{lem}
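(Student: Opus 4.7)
The plan is to reduce constancy of the angle to the constancy of the function $F$ supplied by \Cref{lem:constancy of time separation}, applying the first variation formula for one sign of $d$ and \Cref{pop:quadrangle rigidity} for the other. Without loss of generality assume that $c>c_+$, where $c_+\ge 0$ is the minimal shift with $\alpha_1(s)\ll\alpha_2(s+c_+)$ for all $s$; then $\alpha_1(t-c)\ll\alpha_2(t)$ for every $t$, and $F(c):=\tau(\alpha_1(s),\alpha_2(s+c))$ is a well-defined, $s$-independent, positive function. The complementary case with $\alpha_1(t-c)\gg\alpha_2(t)$ is handled by the analogous argument after swapping the roles of $\alpha_1$ and $\alpha_2$ (equivalently, by time reversal).

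Suppose first that $d>0$. Then the geodesic from $\alpha_2(t)$ to $\alpha_2(t+d)$ coincides with the future branch $\alpha_2^+$ of $\alpha_2$ at $\alpha_2(t)$, so the angle in question does not depend on the specific value $d>0$. Apply \Cref{fvf} with $\gamma(\varepsilon)=\alpha_2(t+\varepsilon)$ and $p=\alpha_1(t-c)$; since $p\ll\gamma(0)$ we have $\sigma=+1$, and $\tau(p,\gamma(\varepsilon))=F(c+\varepsilon)$. The right-derivative $F'_+(c)$ therefore exists and equals $\cosh(\ma_{\alpha_2(t)}(\alpha_1(t-c),\alpha_2(t+d)))$; as the left-hand side is manifestly independent of $t$, so is the angle. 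Denote the common value by $\theta_0:=\cosh^{-1}(F'_+(c))$.

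Suppose now that $d<0$. Both geodesics at $\alpha_2(t)$ are past-directed, so \Cref{fvf} does not apply directly. Fix $t_0<t$ with $0<t-t_0<c-c_+$ and consider the quadrangle with vertices $p_1:=\alpha_1(t_0-c)$, $p_2:=\alpha_1(t-c)$, $p_4:=\alpha_2(t_0)$, $p_3:=\alpha_2(t)$. A short verification using the weak-parallelism shifts yields $p_1\ll p_2\ll p_4\ll p_3$ with timelike diagonals and $p_1,p_3$ on opposite sides of $[p_2,p_4]$. The first variation formula, applied at $p_4$ exactly as in the previous paragraph, gives $\ma_{p_4}(p_1,p_3)=\theta_0$; the analogous application at $p_1$ on the $\alpha_1$ side (with $\gamma(\varepsilon)=\alpha_1(t_0-c+\varepsilon)$, $p=\alpha_2(t_0)$, $\sigma=-1$, so $l(\varepsilon)=F(c-\varepsilon)$) gives $\ma_{p_1}(p_2,p_4)=\phi_0:=\cosh^{-1}(F'_-(c))$. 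The remaining two angles satisfy $\ma_{p_2}(p_1,p_3)\le\phi_0$ and $\ma_{p_3}(p_2,p_4)\ge\theta_0$ by two applications of \Cref{pop:triangle inequality angles}\ref{triIneqAng:AlongGeo}, one along $\alpha_1$ at $p_2$ and one along $\alpha_2$ at $p_3$ (in each case comparing a mixed angle with the same-orientation angle across the line through the vertex). Adding the extreme and the middle angles gives
\[
\ma_{p_1}(p_2,p_4)+\ma_{p_3}(p_2,p_4)\;\ge\;\phi_0+\theta_0\;\ge\;\ma_{p_2}(p_1,p_3)+\ma_{p_4}(p_1,p_3),
\]
so \Cref{pop:quadrangle rigidity} applies and forces equality throughout. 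In particular the inequality $\ma_{p_3}(p_2,p_4)\ge\theta_0$ is an equality, and since for $d<0$ the geodesic from $\alpha_2(t)$ to $\alpha_2(t+d)$ coincides with the past branch $\alpha_2^-$ at $\alpha_2(t)$, we conclude $\ma_{\alpha_2(t)}(\alpha_1(t-c),\alpha_2(t+d))=\theta_0$, independent of $t$.

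The main technical obstacle will be verifying the hypotheses of \Cref{fvf}, i.e.\ that the connecting geodesics $[\alpha_1(t-c),\alpha_2(t+\varepsilon)]$ converge uniformly as $\varepsilon\to 0^+$ to $[\alpha_1(t-c),\alpha_2(t)]$ and that the relevant angles are finite; these should follow from uniqueness of geodesics in the $(\le 0)$-comparison neighbourhood $X$, continuity of $\tau$, and a standard limit-curve argument, or alternatively via \Cref{rem:assumptions on fvf}. The secondary check that all required causal relations hold for the quadrangle is a short verification using the shift constants of weak parallelism, which is exactly what the constraint $0<t-t_0<c-c_+$ ensures.
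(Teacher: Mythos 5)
Your reduction of the angle to the one-sided derivatives of the function $F$ from \Cref{lem:constancy of time separation} via the first variation formula is exactly the paper's argument, and your $d>0$ case coincides with the proof in the paper (which, as you anticipate, invokes \Cref{rem:assumptions on fvf} to apply \Cref{fvf} without the convergence hypotheses, after first checking finiteness of the angle via angle comparison and \Cref{pop:triangle inequality angles}\ref{triIneqAng:AlongGeo}). Where you diverge is the case $d<0$: the paper simply declares the remaining cases ``analogous'', meaning one applies the time-dual of \Cref{fvf} to the past-directed branch $\gamma(\varepsilon)=\alpha_2(t-\varepsilon)$ with $l(\varepsilon)=F(c-\varepsilon)$ and reads off the angle as $\cosh^{-1}(F'(c^-))$, again manifestly $t$-independent; your quadrangle argument via \Cref{pop:quadrangle rigidity} is a correct alternative (the causal relations, the identification of the angles at $p_1$ and $p_4$ by the FVF, and the two applications of \Cref{pop:triangle inequality angles}\ref{triIneqAng:AlongGeo} all check out, and the forced equality does pin down $\ma_{p_3}(p_2,p_4)=\theta_0$), but it is considerably heavier machinery for the same conclusion, and it even proves slightly more (it forces the mixed past angle to equal the mixed future angle, i.e.\ differentiability of $F$, which the direct route does not need). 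Two caveats on your route: first, \Cref{pop:quadrangle rigidity} as stated assumes $X$ globally causally closed, which is \emph{not} among the hypotheses of this lemma — you only need the inequality-reversal half of its proof (angle comparison, \Cref{lem:angleSumMink} and \Cref{pop:triangle inequality angles}\ref{FuFuPa/PaPaFu}), which does not use causal closedness, so you should cite that portion rather than the full proposition; second, the finiteness of the angles, which you defer as a ``technical obstacle'', is a genuine precondition of \Cref{fvf}/\Cref{prop:geq} and is established in the paper at the outset by exactly the comparison arguments you gesture at, so that part should be spelled out rather than left open.
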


\begin{proof}
First, observe that the angle $\ma_{\alpha_{2}(t)} (\alpha_1(t-c),\alpha_2(t+d))$ is finite for any $t\in \mathbb{R}$. Indeed, if the sign of the angle is $\sigma=-1$, this is a consequence of angle comparison. In the case of $\sigma=1$, this follows from the case $\sigma=-1$ combined with the inequality
\[
\ma_{\alpha_2(t)}(\alpha_1(t-c),\alpha_2(t+d)) \leq \ma_{\alpha_2(t)}(\alpha_1(t-c),\alpha_2(t-d))
\]
obtained by \Cref{pop:triangle inequality angles} \ref{triIneqAng:AlongGeo}. For the rest of the proof we will only treat the case $d>0$ and $\alpha_1(t-c)\ll\alpha_2(t)$, with the remaining cases working analogously.

Our curvature assumption allows us to apply \Cref{fvf} without extra assumptions (see \Cref{rem:assumptions on fvf}).  Thus,
\begin{equation}\label{eq:fvf constant angle}
\cosh(\ma_{\alpha_{2}(t)} (\alpha_1(t-c),\alpha_2(t+d))) = \left.\frac{d}{ds}\right|_{s=0^+} \tau(\alpha_1(t-c),\alpha_2(t+s)) \, .
\end{equation}
Since $\alpha_1(t-c) \ll \alpha_2(t)\ll \alpha_2(t+s)$ for any $s>0$, we have
\[
\tau(\alpha_1(t-c),\alpha_2(t+s)) = F(c+s) \, .
\]
In particular, the right hand side of equation~\eqref{eq:fvf constant angle} equals 
\[
\left.\frac{d}{ds}\right|_{s=0^+} F(c+s) =: F'(c^+) \, ,
\]
which does not depend on $t$, and the claim follows. 
\end{proof}

\begin{lem}[Weakly parallel lines bound flat strip]
\label{lem:flat strip}
Assume the same hypothesis as in \Cref{lem:constancy of time separation} and, additionally, that $X$ is globally causally closed. 
Then either $\alpha_1(\mathbb{R})=\alpha_2(\mathbb{R})$ or they span a timelike convex flat strip, i.e., there are parallel lines $\widetilde{\alpha}_1,\ \widetilde{\alpha}_2$ in $\mathbb{R}^{1,1}$ and an isometry $f\colon\conv(\widetilde{\alpha}_1(\mathbb{R})\cup\widetilde{\alpha}_2(\mathbb{R}))\to X$ such that $f\circ \widetilde{\alpha}_i=\alpha_i$, $i=1,2$. In particular, up to a shift reparametrization, $\alpha_1,\alpha_2$ are synchronised parallel.
\end{lem}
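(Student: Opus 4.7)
The plan is to reduce the claim to the quadrangle rigidity result \Cref{pop:quadrangle rigidity} applied to a family of timelike quadrangles whose vertices alternate between $\alpha_1$ and $\alpha_2$, and then patch the resulting flat fill-ins into a global isometry. Assume $\alpha_1(\R)\neq\alpha_2(\R)$, otherwise there is nothing to prove. By \Cref{lem:constancy of time separation} the value $F(c):=\tau(\alpha_1(t),\alpha_2(t+c))$ depends only on $c$, and weak parallelity together with push-up and global causal closedness gives $c_0:=\inf\{c:F(c)>0\}\in[0,\infty)$ and the causal characterisation $\alpha_1(s)\leq\alpha_2(t)\iff t-s\geq c_0$. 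This $c_0$ will turn out to be the spacelike distance between the planar lines $\tilde\alpha_1,\tilde\alpha_2$.

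For any $c>c_0$ and any $s_1<s_2$ with $s_2-s_1$ large enough that $\alpha_2(s_1+c)\ll\alpha_1(s_2)$, I form the timelike quadrangle $(p_1,p_2,p_3,p_4):=(\alpha_1(s_1),\alpha_2(s_1+c),\alpha_2(s_2+c),\alpha_1(s_2))$, whose two diagonals are portions of $\alpha_1$ and $\alpha_2$. Applying the first variation formula \Cref{fvf} (globally available by \Cref{rem:assumptions on fvf}) at each of the four corners computes the relevant angles in terms of one-sided derivatives of $F$: at $p_1,p_2$ the variation is directly along the future continuation of the line through that vertex, while at $p_3,p_4$ the reference direction is past-pointing, a case I would handle either by time-reversing the whole setup or by combining \Cref{pop:triangle inequality angles}\ref{triIneqAng:AlongGeo} with the identity $\ma_x(\alpha^+,\alpha^-)=0$ for a timelike line $\alpha$ through $x$ (immediate from the Lorentzian law of cosines). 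The outcome is
\[
\cosh\bigl(\ma_{p_1}(p_2,p_4)\bigr)=\cosh\bigl(\ma_{p_3}(p_2,p_4)\bigr)=F'(c^-),\quad \cosh\bigl(\ma_{p_2}(p_1,p_3)\bigr)=\cosh\bigl(\ma_{p_4}(p_1,p_3)\bigr)=F'(c^+).
\]
Since $F$ is the $\tau$-distance from a fixed point of $\alpha_1$ to a point moving along the timelike geodesic $\alpha_2$, the CBA$(0)$ bound forces $F$ to be concave where positive (the Lorentzian analogue of convexity of the distance to a geodesic in a CAT$(0)$ space), so $F'(c^-)\geq F'(c^+)$. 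This is precisely \eqref{eq:quadranglerigidity}, and \Cref{pop:quadrangle rigidity} produces a flat fill-in $\bar Q_{s_1,s_2,c}\subset\R^{1,1}$ together with an isometry $f_{s_1,s_2,c}\colon\bar Q_{s_1,s_2,c}\to X$ sending vertices to vertices.

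The final step is to patch these local fill-ins. By \Cref{pop:triangleCBequality} applied to the two constituent triangles of each quadrangle, the map $f_{s_1,s_2,c}$ is uniquely determined by its action on the four vertices; combined with uniqueness of geodesics in the $(\leq 0)$-comparison neighbourhood $X$, this forces any two overlapping fill-ins to coincide on their intersection. Letting $s_1\to-\infty$, $s_2\to+\infty$ and $c\searrow c_0$, the planar quadrangles $\bar Q_{s_1,s_2,c}$ exhaust $\conv(\tilde\alpha_1(\R)\cup\tilde\alpha_2(\R))$ for parallel lines $\tilde\alpha_1,\tilde\alpha_2\subset\R^{1,1}$ at spacelike distance $c_0$, and the glued map is the desired isometry $f$. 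By construction $f\circ\tilde\alpha_i=\alpha_i$ after a shift reparametrisation, making $\alpha_1,\alpha_2$ synchronised parallel. The main technical obstacle is precisely this last patching step: checking mutual compatibility of the fill-ins and showing that their union exhausts the full planar convex strip, the most delicate point being the degenerate limit $c\searrow c_0$ where the fill-in quadrangles collapse along the null boundary.
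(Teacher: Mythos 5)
Your proposal follows the paper's proof closely in its overall structure (alternating quadrangles between $\alpha_1$ and $\alpha_2$, \Cref{pop:quadrangle rigidity}, patching the flat fill-ins), and the first-variation computations of the four corner angles as $F'(c^\pm)$ are correct. However, the key inequality is not fully justified. You derive \eqref{eq:quadranglerigidity} from the claim that $c\mapsto F(c)$ is concave ``as the Lorentzian analogue of convexity of $d(p,\gamma(\cdot))$ in CAT$(0)$.'' This analogy does not transfer, because the inequalities point the wrong way: the $(\leq 0)$ comparison gives $\tau(p,\gamma(s))\leq\tau(\bar p,\bar\gamma(s))$, i.e.\ an \emph{upper} bound by a concave model function, whereas concavity of $F$ is a \emph{lower} bound by chords. (In CAT$(0)$ the comparison inequality and convexity are both ``$\leq$,'' which is why the argument closes there.) What you actually need, and what does follow, is only the pointwise one-sided inequality $F'(c^-)\geq F'(c^+)$. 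That is exactly \Cref{pop:triangle inequality angles}\ref{triIneqAng:AlongGeo} applied at $p_4$ (resp.\ $p_3$) — the angle with the past continuation of $\alpha_1$ is at most the angle with the future continuation — combined with the first variation formula; the paper packages this as \Cref{lem:constancy of angle} (constancy along the line, value $\cosh^{-1}(F'(c^+))$) plus the triangle inequality for angles. You mention \Cref{pop:triangle inequality angles}\ref{triIneqAng:AlongGeo} only as a device to \emph{evaluate} the past-pointing angles, not to \emph{compare} them, so the logical chain as written has a gap.

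Two further remarks. First, the limit $c\searrow c_0$ is unnecessary and your concern about it is misplaced: for any fixed $c>c_0$, the spacelike width of each flat fill-in is $F(c)\sqrt{F'(c^+)^2-1}$, which is independent of $c$ (and equal to $c_0$ once flatness is established), so for a single fixed $c$ letting $s_1\to-\infty$, $s_2\to\infty$ already exhausts the full strip $\conv(\widetilde\alpha_1(\R)\cup\widetilde\alpha_2(\R))$; this is what the paper does by fixing $c$ and varying $(t,d)$. Moreover, allowing $c$ to vary raises a compatibility question across fill-ins with different $c$ that you do not address, and which is simply avoided by not varying $c$. Second, a small misstatement: in your quadrangle $(p_1,p_2,p_3,p_4)=(\alpha_1(s_1),\alpha_2(s_1+c),\alpha_2(s_2+c),\alpha_1(s_2))$ with $p_1\ll p_2\ll p_4\ll p_3$, it is the sides $[p_2,p_3]$ and $[p_4,p_1]$ that lie on $\alpha_2$ and $\alpha_1$ respectively; the diagonals $[p_1,p_3]$ and $[p_2,p_4]$ are the transversal geodesics.
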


\begin{proof}
Fix $c\in\mathbb{R}$ such that $\alpha_1(t)\ll\alpha_2(t+c)$ and $\alpha_2(t) \ll \alpha_1(t+c)$ for any $t\in \mathbb{R}$. Moreover, since $\alpha_1$ and $\alpha_2$ are both future-directed, we can assume that $c\geq 0$. Let $d\geq 0$ and, for any $t\in \mathbb{R}$, let
\begin{align*}
p_1(t) &= \alpha_1(t) \, , \\
p_2(t) &= \alpha_2(t+c) \, , \\
p_3(t) &= \alpha_2(t+3c+d) \, , \\
p_4(t) &= \alpha_1(t+2c+d) \, .
\end{align*}
Then, by construction, $p_1(t)\ll p_2(t)\ll p_4(t)\ll p_3(t)$, and by \Cref{lem:constancy of angle} and \Cref{pop:triangle inequality angles} \ref{triIneqAng:AlongGeo}, we have
\begin{equation}\label{eq:corresponding angles1}
\ma_{p_1(t)}(p_2(t),p_4(t)) \geq \ma_{p_4(t)}(p_1(t),p_3(t))
\end{equation}
and
\begin{equation}\label{eq:corresponding angles2}
\ma_{p_3(t)}(p_4(t),p_2(t)) \geq \ma_{p_2(t)}(p_1(t),p_3(t)) \, .
\end{equation}
This implies
\begin{equation}\label{eq:quadrangle-splitting theorem}
\begin{split}
\ma_{p_1(t)}(p_2(t),p_4(t))\ + &\ \ma_{p_3(t)}(p_4(t),p_2(t)) \geq\\  &\ma_{p_2(t)}(p_1(t),p_3(t))   +  \ma_{p_4(t)}(p_1(t),p_3(t)) \, .
\end{split}
\end{equation}

\begin{figure}
\begin{tabular*}{\textwidth}{@{}cc@{}}
\begin{minipage}{0.5\textwidth}
\centering
\def \globalscale {1}
\begin{tikzpicture}[y=1cm, x=1cm, yscale=0.5*\globalscale,xscale=0.5*\globalscale, every node/.append style={scale=\globalscale}, inner sep=0pt, outer sep=0pt]
  \begin{scriptsize}
  \fill 
  (5.8,14)   circle (3pt)   node[left,inner sep=4pt]{$p_1(t)$}
  (5.8,23.5) circle (3pt)   node[left,inner sep=4pt]{$p_4(t)$}
  (10,18.5)  circle (3pt)   node[right,inner sep=4pt]{$p_2(t)$}
  (10,28)    circle (3pt)   node[right,inner sep=4pt]{$p_3(t)$};
  
  \fill[black!10] (5.8,14) -- (10,18.5) -- (10,28) -- (5.8,23.5) -- cycle; 
  
  \node[anchor=south west] at (7.0, 20.8){$Q_d(t)$};
  \node[above,inner sep=4pt] at (5.8, 29){$\alpha_1$};
  \node[above,inner sep=4pt] at (10, 29){$\alpha_2$};
  \end{scriptsize}
  \path[draw=black,line width=1.pt,miter limit=4.0] 
  (5.8,14.7)arc(90:40:0.6) 
  (5.8, 24.2)arc(90:40:0.6)
  (10,19.0)arc(90:220:0.6)
  (10,28.5)arc(90:220:0.6);
  
  \path[draw=black,line cap=butt,line join=miter,line width=1.pt] 
  (5.8, 14.0)   -- (10.0, 18.5) 
  (5.8, 23.5)   -- (10.0, 28.0) 
  (5.8, 29.1)   -- (5.8, 13.0)
  (10.0, 29.1)  -- (10.0, 13.0);
  
  \path[draw=black,line cap=butt,line join=miter,line width=1.pt] 
  (6.1, 14.8)   -- (5.9, 14.4)
  (6.1, 24.3)   -- (5.9, 23.9) 
  (9.4, 18.9)   -- (9.8, 18.65)
  (9.3, 18.8)   -- (9.7, 18.55)
  (9.4, 28.4)   -- (9.8, 28.15) 
  (9.3, 28.3)   -- (9.7, 28.05)
  (9.8, 22.9)   -- (10.2, 22.9)
  (5.6, 19.4)   -- (6, 19.4)
  (7.5, 25.6)   -- (7.8, 25.35)
  (7.6, 25.7)   -- (7.9, 25.45)
  (7.5, 16.1)   -- (7.8, 15.85)
  (7.6, 16.2)   -- (7.9, 15.95);
\end{tikzpicture}
\end{minipage}
&
\begin{minipage}{0.5\textwidth}
\centering
\def \globalscale {1}
\begin{tikzpicture}[y=1cm, x=1.3cm, yscale=0.53*\globalscale,xscale=0.53*\globalscale, every node/.append style={scale=\globalscale}, inner sep=0pt, outer sep=0pt]
  \fill[black!10] (2.4,12) -- (4.7,15.6) -- (4.7,27.5) -- (2.4,23.9) -- cycle;
  \fill[black!20] (2.4,14.7) -- (4.7,18.3) -- (4.7,24.7) -- (2.4,21.1) -- cycle;
  \path[draw=black,line width=1.pt] 
  (2.4, 28) -- (2.4, 11.5)
  (4.7, 28) -- (4.7, 11.5)
  (2.4, 23.9) -- (4.7, 27.5)
  (2.4, 21.1) -- (4.7, 24.7)
  (2.4, 14.7) -- (4.7, 18.3)
  (2.4, 12) -- (4.7, 15.6);
\begin{scriptsize}
  \node[text=black,anchor=south west,line width=1.pt]  at (3.03, 24.2){$\overline{Q}_{d'}(t')$};  
  \node[text=black,anchor=south west,line width=1.pt]  at (3.1, 19.6){$\overline{Q}_{d}(t)$};
  \node[text=black,anchor=south west,line width=1.pt]  at (3.55, 26.7){$\vdots$};
  \node[text=black,anchor=south west,line width=1.pt]  at (3.55, 12.4){$\vdots$};    
\end{scriptsize}
\end{tikzpicture}
\end{minipage}
\\
\begin{minipage}[t]{0.5\textwidth}
\captionsetup{width=0.8\linewidth, font=normalsize}
\captionof{figure}{The quadrilateral $Q_{d}(t)$ bounds a convex region.}
\label{fig:quadrilateral1}
\end{minipage}
&
\begin{minipage}[t]{0.5\textwidth}
\captionsetup{width=0.8\linewidth, font=normalsize}
\captionof{figure}{$\overline{Q}_{d}(t)\subseteq \overline{Q}_{d'}(t')$ for $0<t-t'<d'-d$.}
\label{fig:quadrilateral2}
\end{minipage}
\end{tabular*}
\end{figure}

By \Cref{pop:quadrangle rigidity}, the quadrangle $Q_{d}(t)=(p_1(t),p_2(t),p_3(t),p_4(t))$ bounds a convex flat region, see \Cref{fig:quadrilateral1}, i.e.\ there exists a quadrangle 
\[
\overline{Q}_{d}(t)=\conv(\overline{p_1(t)},\overline{p_2(t)},\overline{p_3(t)},\overline{p_4(t)})\subseteq \mathbb{R}^{1,1}
\]
and a $\tau$- and causality-preserving map $f_{d}^t\colon \overline{Q}_{d}(t)\to X$ such that 
\[
f_{d}^t(\overline{p_i(t)})=p_i(t) \, , \quad i=1,2,3,4.
\]
Moreover, the inequality \eqref{eq:quadrangle-splitting theorem} is actually an equality, which in turn implies that inequalities \eqref{eq:corresponding angles1} and \eqref{eq:corresponding angles2} are equalities as well. Since 
\[
\widetilde{\ma}_{p_1(t)}(p_2(t),p_4(t)) = \ma_{p_1(t)}(p_2(t),p_4(t)) = \cosh^{-1}(F'(c^+))
\]
and
\[
\tau_{\mathbb{R}^{1,1}}(\overline{p_1(t)},\overline{p_2(t)}) = \tau(p_1(t),p_2(t)) = F(c)
\]
are independent of $t$ and $d$, then, up to an isometry of the Minkowski plane, we can assume that 
$\overline{Q}_{d}(t)\subseteq \mathbb{R}^{1,1}$ is a parallelogram with vertices of the form 
\begin{align*}
&\overline{p_1(t)}=(t,0) \, ,\\
&\overline{p_2(t)}=(t+F(c)F'(c^+), F(c)\sqrt{F'(c^+)^2-1}) \, ,\\
&\overline{p_3(t)}=(t+2c+d+F(c)F'(c^+),F(c)\sqrt{F'(c^+)^2-1}) \, ,\\
&\overline{p_4(t)}=(t+2c+d,0) \, .
\end{align*}
Therefore, whenever $d'-d>t-t'>0$ we have 
\[
\overline{Q}_{d}(t)\subseteq \overline{Q}_{d'}(t') \, ,
\]
see \Cref{fig:quadrilateral2}. 
Furthermore, by the proof of \Cref{pop:quadrangle rigidity}, if $(x,y)\in \overline{Q}_{d}(t)$ then $f_d^t(x,y) = [\alpha_1(s),\alpha_2(s+c)](\lambda)$ for unique $s \in [t,t+2c+d]$ and $\lambda \in [0,F(c)]$, where $[\alpha_1(s),\alpha_2(s+c)]$ is the unique geodesic joining $\alpha_1(s)$ and $\alpha_2(s+c)$, parametrized by $\tau$-arclength. Therefore, $f^{t'}_{d'}|_{\overline{Q}_{d}(t)} = f^t_{d}$ whenever $0<t-t'<d'-d$. 

We can thus define the $\tau$- and causality-preserving map 
\[
f\colon \mathbb{R}\times [0,F(c)\sqrt{F'(c^+)^2-1}]\to X
\]
by setting
\[
f(x,y) = f_d^t(x,y) 
\]
for any $d>0$ and $t\in \mathbb{R}$ such that $(x,y)\in \overline{Q}_d(t)$. The last assertion of the theorem is an immediate consequence of \Cref{pop: syncparallellines}.
\end{proof}

Finally, we are in a position to prove the main theorem of this work. 

\begin{thm}[Lorentzian splitting theorem for non-positive curvature]\label{thm:lorentzianHadamard}
Let $X$ be a \LpLS with timelike curvature globally bounded above by $0$. 
Let $\gamma$ be a complete timelike line and define
\[
S=\{\alpha:\alpha\text{ is a complete timelike line weakly parallel to }\gamma\}/\sim
\]
where $\alpha\sim\beta$ if there exists $c\in\mathbb{R}$ such that $\alpha(t) = \beta(t+c)$ for all $t\in \mathbb{R}$. Then each element of $S$ has a representative which is synchronised parallel to $\gamma$, and these representatives are synchronised parallel to each other. Moreover, the quantity
\[
d_S([\alpha],[\beta]) := \inf\{(t-s)/2:\beta(s)\leq\alpha(0)\leq\beta(t)\}
\]
does not depend on the chosen representatives and it is the spacelike distance between the synchronised parallel representatives. 

This defines a metric on $S$ such that $(S,d_S)$ is a $\mathrm{CAT}(0)$ space, and the map $f\colon \prescript{-}{}{\mathbb{R}}\times S\to X$ given by $(t,[\alpha]) \mapsto {\alpha}(t)$ is an isometric embedding, where ${\alpha}$ is the unique representative of $[\alpha]$ that is synchronised parallel to $\gamma$.

In particular, if $X=\bigcup_{[\alpha]\in S}\alpha(\R)$ then $X$ is isometric to the product $\prescript{-}{}{\R}\times S$, where $S$ is a $\mathrm{CAT}(0)$ space.
\end{thm}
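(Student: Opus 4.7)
The strategy is to construct, for each class $[\alpha] \in S$, a canonical representative $\tilde\alpha$ synchronised parallel to $\gamma$, to show that any two such representatives are pairwise synchronised parallel, and finally to verify that the map $f$ provides an isometric embedding of the Lorentzian product $\prescript{-}{}{\mathbb R}\times S$ into $X$. The $\mathrm{CAT}(0)$ property of $S$ will then follow from \Cref{pop:AGKS-products} once we know the image of $f$ inherits the global $(\leq 0)$ upper curvature bound from $X$.

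For the canonical representative I would apply \Cref{lem:flat strip} to $\alpha$ and $\gamma$, producing a shift of the parametrization of $\alpha$ whose result $\tilde\alpha$ is synchronised parallel to $\gamma$ with some spacelike distance $c_\alpha \geq 0$; uniqueness within $[\alpha]$ is immediate from the explicit formula $\tau(\gamma(s),\tilde\alpha(t))=\sqrt{(t-s)^2-c_\alpha^2}$. The main obstacle is then showing that the canonical representatives $\tilde\alpha,\tilde\beta$ of any two classes are synchronised parallel. By \Cref{lem:flat strip} applied to $\tilde\alpha$ and $\tilde\beta$ there exist a shift $s\in\mathbb R$ and distance $d\geq 0$ with $\tau(\tilde\alpha(0),\tilde\beta(T)) = \sqrt{(T+s)^2-d^2}$ for all large $T$; on the other hand, the reverse triangle inequality through $\gamma(u)$ gives
\[
\tau(\tilde\alpha(0),\tilde\beta(T)) \geq \max_{c_\alpha \leq u \leq T-c_\beta}\Big(\sqrt{u^2-c_\alpha^2}+\sqrt{(T-u)^2-c_\beta^2}\Big) = \sqrt{T^2-(c_\alpha+c_\beta)^2},
\]
where the maximum is obtained via the standard hyperbolic parametrization $u=c_\alpha\cosh\phi,\ T-u=c_\beta\cosh\psi$ and the Lagrange condition $\phi=\psi$. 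Comparing the two forces $s\geq 0$ as $T\to\infty$; the symmetric inequality on $\tau(\tilde\beta(0),\tilde\alpha(T))$ gives $s\leq 0$, hence $s=0$.

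Having established pairwise synchronisation, the spacelike distance $c$ between $\tilde\alpha$ and $\tilde\beta$ coincides with the infimum defining $d_S$, which is manifestly shift-invariant in $\alpha,\beta$; symmetry, positivity, and the fact that $d_S=0$ implies $[\alpha]=[\beta]$ (via antisymmetry of $\leq$) are immediate from the synchronised parallel characterisation, and the triangle inequality $c_{\alpha\gamma'} \leq c_{\alpha\beta}+c_{\beta\gamma'}$ follows from $\tilde\alpha(0)\leq\tilde\beta(c_{\alpha\beta})\leq\tilde\gamma'(c_{\alpha\beta}+c_{\beta\gamma'})$ and transitivity of $\leq$. The map $f(t,[\alpha]):=\tilde\alpha(t)$ then preserves $\tau$ and $\leq$ tautologically, by \Cref{pop: syncparallellines} and \Cref{def: lor product}.

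For the $\mathrm{CAT}(0)$ conclusion I would exploit the flat strip of \Cref{lem:flat strip} between $\tilde\alpha$ and $\tilde\beta$: every vertical slice $\sigma_y$ $(y\in(0,d))$ is a complete timelike line weakly parallel to $\gamma$ (by \Cref{lem:weak parallel transitive}), and the same reverse triangle argument, now pivoting through $\tilde\alpha$ rather than $\gamma$, shows that $\sigma_y$ is its own canonical representative. Hence the entire flat strip lies inside the image of $f$; since between any two points of the image there is a unique timelike geodesic in $X$ (by the global $(\leq 0)$ bound), which must remain in such a strip, the image is timelike geodesically convex, and therefore inherits the global $(\leq 0)$ upper curvature bound from $X$. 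Through $f$, the product $\prescript{-}{}{\mathbb R}\times S$ carries the same bound, and the vertical-slice construction also shows that $S$ is geodesic, so \Cref{pop:AGKS-products} yields that $(S,d_S)$ is a $\mathrm{CAT}(0)$ space. The final assertion is then immediate: when $X = \bigcup_{[\alpha]\in S}\alpha(\mathbb R)$, $f$ is surjective and hence a bijective $\tau$- and $\leq$-preserving map, i.e.\ an isometry of Lorentzian pre-length spaces.
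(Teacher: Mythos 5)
Your proposal is correct and follows essentially the same structure as the paper's proof: canonical representatives via \Cref{lem:flat strip}, pairwise synchronisation via an asymptotic reverse-triangle estimate pivoting through $\gamma$, well-definedness and metric axioms for $d_S$, verification that $f$ is $\tau$- and causality-preserving, geodesicity of $S$ via vertical slices of flat strips, and the $\mathrm{CAT}(0)$ conclusion from \Cref{pop:AGKS-products}. There are only small presentational differences. For the pairwise-synchronisation step, you obtain the asymptotic lower bound $\tau(\tilde\alpha(0),\tilde\beta(T))\geq\sqrt{T^2-(c_\alpha+c_\beta)^2}$ by optimising the intermediate pivot $\gamma(u)$ explicitly; the paper instead invokes the intercept theorem for the comparison configuration to get the same estimate $t-\tau(\alpha(0),\beta(t))\to 0$ and then derives a contradiction with the shift. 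For passing the global $(\leq 0)$ bound to $\prescript{-}{}{\mathbb{R}}\times S$, you make the geodesic convexity of the image of $f$ explicit (vertical slices of each flat strip are themselves weakly parallel lines, whose canonical parametrizations agree with the flat-strip parametrization, hence the strip lies in the image and absorbs the unique geodesic between any two of its points); the paper only establishes geodesicity of $S$ through the same vertical-slice device and then asserts the curvature bound passes, leaving the convexity argument implicit. Both routes are sound, and yours arguably spells out a small step the paper glosses over.
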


\begin{proof}
Throughout this proof, we use the notation from \Cref{def: lor product} for Lorentzian products, and we denote by $\leq_{{\mathbb{R}}\times S}$ and $\tau_{{\mathbb{R}}\times S}$ the causal relation and time separation of the Lorentzian product $\prescript{-}{}{\mathbb{R}}\times S$. Analogously, we denote by $\leq_X$ and $\tau_X$ the causal relation and time separation corresponding to $X$. 

Let us assume first that we have proven already that every element of $S$ has a representative that is synchronised parallel to $\gamma$ and these representatives are synchronised parallel to each other. Then \Cref{pop: syncparallellines} and explicit calculations in $\mathbb{R}^{1,1}$ show that the formula of $d_S([\alpha],[\beta])$ in the statement is independent of representatives, equal to the spacelike distance and can be alternatively expressed as 
\begin{equation}\label{eq:alternative defintion d_S}
d_S([\alpha],[\beta]) = \inf\{t-s:\alpha(s)\leq_X\beta(t)\}
\end{equation}
for any $s\in \R$ where $\alpha$ and $\beta$ are the synchronised parallel representatives. 
Then, just as in the proof of \cite[Lemma 5.2]{BORS23}, we obtain that $d_S$ is non-negative, symmetric and positive definite. 

To prove that $d_S$ satisfies the triangle inequality, consider $[\alpha],[\beta],[\eta]\in S$, assume that $\alpha$, $\beta$, $\eta$ are pairwise synchronised parallel, and let $\varepsilon>0$. By 
equation~\eqref{eq:alternative defintion d_S},
\[
\alpha(t)\leq_X \eta(t+d_S([\alpha],[\eta])+\varepsilon)\leq_X \beta(t+d_S([\alpha],[\eta])+d_S([\eta],[\beta])+2\varepsilon) \, ,
\]
for all $t\in\mathbb{R}$. 
Therefore 
\[
\alpha(t)\leq_X \beta(t+d_S([\alpha],[\eta])+d_S([\eta],[\beta])+2\varepsilon)
\]
and analogously
\[
\beta(t)\leq_X \alpha(t+d_S([\alpha],[\eta])+d_S([\eta],[\beta])+2\varepsilon)
\]
for all $t\in \mathbb{R}$. 
Again by definition of $d_S$, this implies that 
\[
d_S([\alpha],[\beta])\le d_S([\alpha],[\eta])+d_S([\eta],[\beta])+2\varepsilon \, ,
\]
and by letting $\varepsilon\to 0$, the triangle inequality follows. 

We also get that $(S,d_S)$ is a geodesic space. Indeed, given $[\alpha_0],[\alpha_1] \in S$, by \Cref{lem:flat strip} there are parallel, future-directed, complete timelike lines $\overline{\alpha}_0$ and $\overline{\alpha}_1$ in $\mathbb{R}^{1,1}$ and an isometry $f\colon \conv(\overline{\alpha}_0(\mathbb{R})\cup\overline{\alpha}_1(\mathbb{R}))\to X$ such that $f\circ\overline{\alpha}_i=\alpha_i$, $i=0,1$. In particular, up to composing with an isometry of $\mathbb{R}^{1,1}$ we can assume that $\overline{\alpha}_0(t) = (t,0)$ and $\overline{\alpha}_1(t) = (t,d_S([\alpha_0],[\alpha_1]))$. Thus we can define $\sigma\colon[0,d_S([\alpha_0],[\alpha_1])]\to S$ by $\sigma(s) = [f\circ \overline{\alpha}_s]$, where $\overline{\alpha}_s\colon \mathbb{R}\to\mathbb{R}^{1,1}$ is given by $\overline{\alpha}_s(t) = (t,s)$, and the fact that $f$ is an isometry implies that $\sigma$ is a geodesic in $S$ between $[\alpha_0]$ and $[\alpha_1]$. 

In order to obtain a $\tau$- and causality-preserving map from $\prescript{-}{}{\mathbb{R}}\times S$ into $X$, observe that for any $(s,[\alpha]),(t,[\beta])\in \mathbb{R}\times S$, we have
\[
(s,[\alpha])\leq_{\mathbb{R}\times S} (t,[\beta])
\]
if and only if $t-s\geq d_S([\alpha],[\beta])$, which in turn is equivalent to 
\[
\alpha(s)\leq_X \beta(t) \, ,
\]
assuming that $\alpha$ and $\beta$ are synchronised parallel representatives. In this case, 
\[
\tau_{\mathbb{R}\times S}((s,[\alpha]),(t,[\beta])) = \sqrt{(t-s)^2-d_S([\alpha],[\beta])^2} = \tau_X(\alpha(s),\beta(t)) \, .
\]
Therefore, the map $(s,[\alpha])\mapsto \alpha(s)$, where $\alpha$ is synchronised parallel to $\gamma$, is a $\tau$- and causality-preserving map from $\prescript{-}{}{\mathbb{R}}\times S$ into $X$. Furthermore, since $X$ has timelike curvature globally bounded above by $0$, then so does $\prescript{-}{}{\mathbb{R}}\times S$. In particular, $S$ is a $\mathrm{CAT}(0)$ space by \Cref{pop:AGKS-products}.

Let us now prove that each element of $S$ has a representative which is synchronised parallel to $\gamma$ and that these representatives are synchronised parallel to each other. The first assertion is an immediate consequence of \Cref{lem:flat strip}. For the second assertion, we proceed along the same lines as the proof of \cite[Theorem 9.2.31]{burago-burago-ivanov2001}.

Let $[\alpha],[\beta]\in S$. Since both $\alpha$ and $\beta$ are weakly parallel to $\gamma$, by  \Cref{lem:flat strip} we can assume they are synchronised parallel to $\gamma$. Therefore, there exist $\tau$- and causality-preserving maps $f_{\alpha,\gamma}\colon (\alpha(\mathbb{R})\cup\gamma(\mathbb{R}))\to \mathbb{R}^{1,1}$ and $f_{\gamma,\beta}\colon (\gamma(\mathbb{R})\cup\beta(\mathbb{R}))\to \mathbb{R}^{1,1}$ such that 
\begin{align*}
&f_{\alpha,\gamma}(\alpha(t))=(t,0) \, ,\\ 
&f_{\gamma,\beta}(\beta(t)) = (t,d_S([\alpha],[\gamma])+d_S([\gamma],[\beta])) \, ,\\
&f_{\alpha,\gamma}(\gamma(t))=f_{\gamma,\beta}(\gamma(t))=(t,d_S([\alpha],[\gamma])) \, .
\end{align*}
It is clear that 
\[
t - \sqrt{t^2-(d_S([\alpha],[\gamma])+d_S([\gamma],[\beta]))^2} \xrightarrow[]{t\to \infty} 0 \, .
\]
Moreover, if $t > d_S([\alpha],[\gamma])+d_S([\gamma],[\beta])$ then, by definition of $\tau_{\mathbb{R}^{1,1}}$, the fact that $f_{\alpha,\gamma}$ and $f_{\gamma,\beta}$ are $\tau$- and causality-perserving, the intercept theorem, and the reverse triangle inequality,
\begin{align*}
\sqrt{t^2-(d_S([\alpha],[\gamma])+d_S([\gamma],[\beta]))^2} &= \tau_{\mathbb{R}^{1,1}}(f_{\alpha,\gamma}(\alpha(0)),f_{\gamma,\beta}(\beta(t)))\\
&=\tau_X(\alpha(0),\gamma(s))+\tau_X(\gamma(s),\beta(t)) \\
&\leq \tau_X(\alpha(0),\gamma(t)) \, ,
\end{align*}
where $s$ is such that $f_{\alpha,\gamma}(\gamma(s))$ is on $[f_{\alpha,\gamma}(\alpha(0)),f_{\gamma,\beta}(\beta(t))]$ (see \Cref{fig:three parallel lines}). Thus, for any $\varepsilon>0$, 
\begin{equation}\label{eq:asymptotics1}
t- \tau_X(\alpha(0),\beta(t)) <\varepsilon
\end{equation}
for sufficiently large $t$.

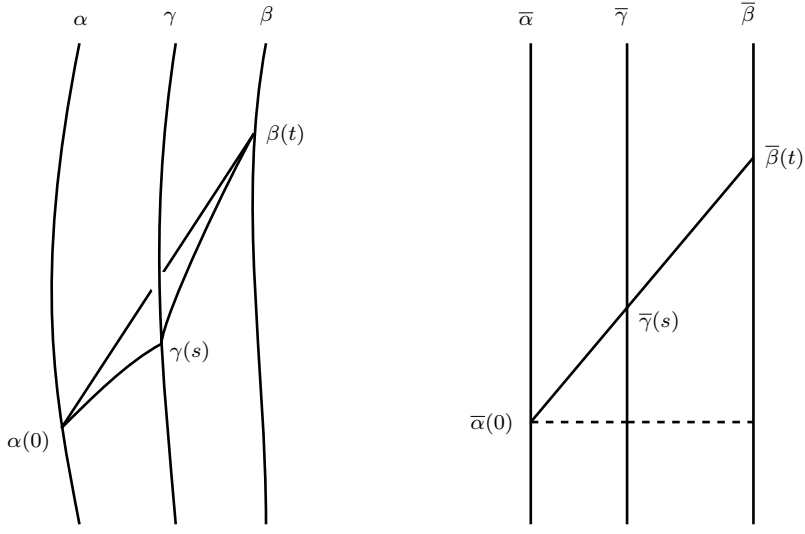
\begin{figure}
\def \globalscale {1}
\begin{tikzpicture}[y=1cm, x=1cm, yscale=0.8*\globalscale,xscale=0.8*\globalscale, every node/.append style={scale=\globalscale}, inner sep=0pt, outer sep=0pt]
  \path[draw=black,line cap=butt,line join=miter,line width=1.pt] 
  (2.7, 21.3) -- (5.9, 26.2)
  (10.5, 27.7) -- (10.5, 19.7)
  (12.1, 27.7) -- (12.1, 19.7)
  (14.2, 27.7) -- (14.2, 19.7)
  (10.5, 21.4) -- (14.2, 25.8)
  (3.0, 27.7).. controls (2.2, 23.7) and (2.6, 21.8) .. (3.0, 19.7)
  (6.1, 27.7).. controls (5.6, 25.0) and (6.1, 22.4) .. (6.1, 19.7);
  
  \path[fill=white,line width=0.1cm,miter limit=4.0] (4.37, 23.7) circle (0.2cm);
  
  \path[draw=black,line cap=butt,line join=miter,line width=1.pt] 
  (2.7, 21.3).. controls (3.2, 21.8) and (3.8, 22.4) .. (4.36, 22.7)
  (4.36, 22.7).. controls (4.4, 23.2) and (5.5, 25.5) .. (5.9, 26.2)
  (4.6, 27.7).. controls (4.1, 24.5) and (4.4, 22.1) .. (4.6, 19.7);
  
  \path[draw=black,line cap=butt,line join=miter,line width=1.pt,miter limit=4.0,dash pattern=on 0.1cm off 0.1cm] 
  (12.1, 21.4) -- (14.2, 21.4)
  (10.5, 21.4) -- (12.1, 21.4);
  \begin{scriptsize}
  \node[anchor=south west] at (10.3, 28.){$\overline{\alpha}$};
  \node[anchor=south west] at (11.9, 28.){$\overline{\gamma}$};
  \node[anchor=south west] at (14.0, 28.){$\overline{\beta}$};
  \node[anchor=south west] at (2.9, 28.){$\alpha$};
  \node[anchor=south west] at (4.4, 28.){$\gamma$};
  \node[anchor=south west] at (6.0, 28.){$\beta$};
  \node[anchor=south west] at (1.8, 20.9){$\alpha(0)$};
  \node[anchor=south west] at (4.5, 22.4){$\gamma(s)$};
  \node[anchor=south west] at (6.1, 26.){$\beta(t)$};
  \node[anchor=south west] at (9.5, 21.2){$\overline{\alpha}(0)$};
  \node[anchor=south west] at (12.3, 22.9){$\overline{\gamma}(s)$};
  \node[anchor=south west] at (14.4, 25.6){$\overline{\beta}(t)$};
  \end{scriptsize}
\end{tikzpicture}
\caption{On the left, $\alpha$ and $\beta$ are synchronised parallel to $\gamma$. On the right, $\overline{\alpha} = f_{\alpha,\gamma}\circ \alpha$, $\overline{\beta} = f_{\gamma,\beta}\circ \beta$, and $\overline{\gamma} = f_{\alpha,\gamma}\circ \gamma = f_{\gamma,\beta}\circ \gamma$, and $\overline{\gamma}(s)$ is on $[\overline{\alpha}(0),\overline{\beta}(t)]$.}
\label{fig:three parallel lines}
\end{figure}

On the other hand, by \Cref{lem:weak parallel transitive}, $\alpha$ and $\beta$ are weakly parallel. Therefore, for some $t_0\in\mathbb{R}$, the reparametrization $\widetilde{\beta}\colon \mathbb{R}\to X$ given by $\widetilde{\beta}(t) = \beta(t+t_0)$ is synchronised parallel to $\alpha$, by \Cref{lem:flat strip}. We want to prove that $t_0=0$, so indirectly and without loss of generality we will assume that $t_0>0$. This means there exists a $\tau$- and causality-preserving map $f_{\alpha,\widetilde{\beta}}\colon (\alpha(\mathbb{R})\cup \widetilde{\beta}(\mathbb{R}))\to \mathbb{R}^{1,1}$ such that
\begin{align*}
&f_{\alpha,\widetilde{\beta}}(\alpha(t)) = (t,0)\\
&f_{\alpha,\widetilde{\beta}}(\beta(t+t_0)) = (t,d_S([\alpha]\,,[\beta])) \, ,
\end{align*}
which in particular implies that
\[
t - \tau_X(\alpha(0),\beta(t+t_0)) \xrightarrow[]{t\to \infty} 0\,.
\]
Thus, for sufficiently large $t$, he reverse triangle inequality implies
\[
t - \tau_X(\alpha(0),\beta(t)) > t_0/2\,.
\]
This contradicts inequality \eqref{eq:asymptotics1}. Therefore, $\alpha$ and $\beta$ are synchronised parallel lines, and the result follows.
\end{proof}

\bibliographystyle{abbrv}

\bibliography{references}

\begin{thebibliography}{10}

\bibitem{AB05}
S.~B. Alexander and R.~L. Bishop.
\newblock A cone splitting theorem for {A}lexandrov spaces.
\newblock {\em Pacific J. Math.}, 218(1):1--15, 2005.

\bibitem{alexander-bishop2008}
S.~B. Alexander and R.~L. Bishop.
\newblock Lorentz and semi-{R}iemannian spaces with {A}lexandrov curvature bounds.
\newblock {\em Comm. Anal. Geom.}, 16(2):251--282, 2008.

\bibitem{alexander-graf-kunzinger-saemann2023}
S.~B. Alexander, M.~Graf, M.~Kunzinger, and C.~S\"amann.
\newblock Generalized cones as {L}orentzian length spaces: causality, curvature, and singularity theorems.
\newblock {\em Comm. Anal. Geom.}, 31(6):1469--1528, 2023.

\bibitem{BarreraMontesdeOcaSolis2022}
W.~Barrera, L.~Montes~de Oca, and D.~A. Solis.
\newblock Comparison theorems for {L}orentzian length spaces with lower timelike curvature bounds.
\newblock {\em Gen. Relativity Gravitation}, 54(9):Paper No. 107, 32, 2022.

\bibitem{BEMG85b}
J.~K. Beem, P.~E. Ehrlich, S.~Markvorsen, and G.~J. Galloway.
\newblock Decomposition theorems for {L}orentzian manifolds with nonpositive curvature.
\newblock {\em J. Differential Geom.}, 22(1):29--42, 1985.

\bibitem{BEMG85a}
J.~K. Beem, P.~E. Ehrlich, S.~Markvorsen, and G.~J. Galloway.
\newblock A {T}oponogov splitting theorem for {L}orentzian manifolds.
\newblock In {\em Global differential geometry and global analysis 1984 ({B}erlin, 1984)}, volume 1156 of {\em Lecture Notes in Math.}, pages 1--13. Springer, Berlin, 1985.

\bibitem{Ber25}
T.~Beran.
\newblock A {B}onnet–{M}yers rigidity theorem for globally hyperbolic lorentzian length spaces.
\newblock {\em Proceedings of the Royal Society of Edinburgh: Section A Mathematics}, page 1–42, 2025.

\bibitem{beran-kunzinger-rott2024}
T.~Beran, M.~Kunzinger, and F.~Rott.
\newblock On curvature bounds in {L}orentzian length spaces.
\newblock {\em J. Lond. Math. Soc. (2)}, 110(2):Paper No. e12971, 41, 2024.

\bibitem{BNR25}
T.~Beran, L.~Napper, and F.~Rott.
\newblock Alexandrov's patchwork and the {B}onnet-{M}yers theorem for {L}orentzian length spaces.
\newblock {\em Trans. Amer. Math. Soc.}, 378(4):2713--2743, 2025.

\bibitem{BORS23}
T.~Beran, A.~Ohanyan, F.~Rott, and D.~A. Solis.
\newblock The splitting theorem for globally hyperbolic {L}orentzian length spaces with non-negative timelike curvature.
\newblock {\em Lett. Math. Phys.}, 113(2):Paper No. 48, 47, 2023.

\bibitem{BR24}
T.~Beran and F.~Rott.
\newblock Gluing constructions for {L}orentzian length spaces.
\newblock {\em Manuscripta Math.}, 173(1-2):667--710, 2024.

\bibitem{beran-saemann2023}
T.~Beran and C.~S\"amann.
\newblock Hyperbolic angles in {L}orentzian length spaces and timelike curvature bounds.
\newblock {\em J. Lond. Math. Soc. (2)}, 107(5):1823--1880, 2023.

\bibitem{bridson-haefliger1999}
M.~R. Bridson and A.~Haefliger.
\newblock {\em Metric spaces of non-positive curvature}, volume 319 of {\em Grundlehren der mathematischen Wissenschaften [Fundamental Principles of Mathematical Sciences]}.
\newblock Springer-Verlag, Berlin, 1999.

\bibitem{burago-burago-ivanov2001}
D.~Burago, Y.~Burago, and S.~Ivanov.
\newblock {\em A course in metric geometry}, volume~33 of {\em Graduate Studies in Mathematics}.
\newblock American Mathematical Society, Providence, RI, 2001.

\bibitem{CG71}
J.~Cheeger and D.~Gromoll.
\newblock The splitting theorem for manifolds of nonnegative {R}icci curvature.
\newblock {\em J. Differential Geometry}, 6:119--128, 1971/72.

\bibitem{erös-gieger2025}
D.~Erös and S.~Gieger.
\newblock A synthetic {L}orentzian {C}artan-{H}adamard theorem, 2025.
\newblock Preprint: \url{https://arxiv.org/abs/2506.22197}.

\bibitem{Esc88}
J.-H. Eschenburg.
\newblock The splitting theorem for space-times with strong energy condition.
\newblock {\em J. Differential Geom.}, 27(3):477--491, 1988.

\bibitem{Gal89}
G.~J. Galloway.
\newblock The {L}orentzian splitting theorem without the completeness assumption.
\newblock {\em J. Differential Geom.}, 29(2):373--387, 1989.

\bibitem{Gig14}
N.~Gigli.
\newblock An overview of the proof of the splitting theorem in spaces with non-negative {R}icci curvature.
\newblock {\em Anal. Geom. Metr. Spaces}, 2(1):169--213, 2014.

\bibitem{GW71}
D.~Gromoll and J.~A. Wolf.
\newblock Some relations between the metric structure and the algebraic structure of the fundamental group in manifolds of nonpositive curvature.
\newblock {\em Bull. Amer. Math. Soc.}, 77:545--552, 1971.

\bibitem{harris1982}
S.~G. Harris.
\newblock A triangle comparison theorem for {L}orentz manifolds.
\newblock {\em Indiana Univ. Math. J.}, 31(3):289--308, 1982.

\bibitem{Inn82}
N.~Innami.
\newblock Splitting theorems of {R}iemannian manifolds.
\newblock {\em Compositio Math.}, 47(3):237--247, 1982.

\bibitem{jee1984}
D.~J. Jee.
\newblock Gauss-{B}onnet formula for general {L}orentzian surfaces.
\newblock {\em Geom. Dedicata}, 15(2):215--231, 1984.

\bibitem{kunzinger-saemann2018}
M.~Kunzinger and C.~S\"amann.
\newblock Lorentzian length spaces.
\newblock {\em Ann. Global Anal. Geom.}, 54(3):399--447, 2018.

\bibitem{LY72}
H.~B. Lawson, Jr. and S.~T. Yau.
\newblock Compact manifolds of nonpositive curvature.
\newblock {\em J. Differential Geometry}, 7:211--228, 1972.

\bibitem{Mil67}
A.~D. Milka.
\newblock Metric structure of a certain class of spaces that contain straight lines.
\newblock {\em Ukrain. Geometr. Sb.}, (4):43--48, 1967.

\bibitem{New90}
R.~P. A.~C. Newman.
\newblock A proof of the splitting conjecture of {S}.-{T}.\ {Y}au.
\newblock {\em J. Differential Geom.}, 31(1):163--184, 1990.

\bibitem{otsu-shioya1994}
Y.~Otsu and T.~Shioya.
\newblock The {R}iemannian structure of {A}lexandrov spaces.
\newblock {\em J. Differential Geom.}, 39(3):629--658, 1994.

\bibitem{Sch85}
V.~Schroeder.
\newblock A splitting theorem for spaces of nonpositive curvature.
\newblock {\em Invent. Math.}, 79(2):323--327, 1985.

\bibitem{Top59}
V.~A. Toponogov.
\newblock Riemannian spaces containing straight lines.
\newblock {\em Dokl. Akad. Nauk SSSR}, 127:977--979, 1959.

\bibitem{Top64}
V.~A. Toponogov.
\newblock The metric structure of {R}iemannian spaces of non-negative curvature containing straight lines.
\newblock {\em Sibirsk. Mat. \v Z.}, 5:1358--1369, 1964.

\end{thebibliography}

\end{document}